\newtheorem{theorem}{Theorem}[section]
\newtheorem{cor}[theorem]{Corollary}
\newtheorem{prop}[theorem]{Proposition}
\newtheorem{lemma}[theorem]{Lemma}
\newtheorem{defn}[theorem]{Definition}
\theoremstyle{definition}
\newtheorem{example}[theorem]{Example}
\newtheorem{remark}[theorem]{Remark}
\newcommand{\N}{\mathbb{N}}
\newcommand{\G}{\mathbb{G}}
\newcommand{\PP}{\mathbb{P}}
\newcommand{\TT}{\mathbb{T}}
\newcommand{\cL}{\mathcal{L}}
\newcommand{\cA}{\mathcal{A}}
\newcommand{\cB}{\mathcal{B}}
\newcommand{\Sphere}{\mathbb{S}}
\newcommand{\Hom}{\operatorname{Hom}}
\newcommand{\leftlin}{LL}
\newcommand{\transfer}{\operatorname{tr}}
\newcommand{\wreath}{\!\wr\!}
\newcommand{\Mod}{\operatorname{Mod}}
\newcommand{\sm}{\wedge}
\begin{document}

\title[Morava $E$-theory of wreath products]{On the Morava $E$-theory of wreath products of symmetric groups}
\author{Peter Nelson} 
\date{\today}

\begin{abstract}
Strickland gave an interpretation of a quotient of the Morava $E$-theory of symmetric groups in terms of algebraic geometry. We identify an analogous quotient  of the Morava $E$-theory of wreath products of symmetric groups with a tensor product of Strickland's quotients. This allows an algebro-geometric interpretation similar to Strickland's.
\end{abstract}

\maketitle

\section{Introduction}

In \cite{strickland_symmetric}, Strickland provides an algebro-geometric interpretation of a certain quotient of the Morava $E$-theory of symmetric groups. Namely, let $E$ be a Morava $E$-theory of height $h$ with formal group $\G$, $\Sigma_m$ the symmetric group on $m$ letters, and $I_m$ the ideal in $E^0(B\Sigma_m)$ generated by the images of the stable transfer maps $E^0(B(\Sigma_i\times \Sigma_{m-i})) \to E^0(B\Sigma_m) $ for $0<i<m$. Strickland then proves the following.

\begin{theorem}[\cite{strickland_symmetric}, Theorem 1.1]
The ring $E^0(B\Sigma_m)/I_m$ classifies subgroup schemes of $\G$ of order $m$.
\end{theorem}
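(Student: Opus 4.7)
The plan is to represent both objects as functors on (complete local) $E^0$-algebras and to exhibit an explicit map between their representing rings. Let $A_m$ be the ring corepresenting the functor $R \mapsto \{\text{subgroup schemes of }\G_R\text{ of order }m\}$. To build a map $A_m \to E^0(B\Sigma_m)/I_m$, I would first construct a natural degree-$m$ divisor on $\G$ over $\spf E^0(B\Sigma_m)$: the tautological embedding $\Sigma_m \hookrightarrow U(m)$ gives an $m$-dimensional complex representation whose Chern-class data assembles into a map $E^0\powseries{x_1,\ldots,x_m}^{\Sigma_m} \to E^0(B\Sigma_m)$, equivalently a divisor $D_m \subset \G$ of degree $m$. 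The candidate classifying map comes from $D_m$, provided that $D_m$ descends (after killing transfers) to a \emph{subgroup} scheme of $\G$ rather than merely a divisor.

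The conceptual reason this should work is that the transfer from $B(\Sigma_i \times \Sigma_{m-i})$ accounts precisely for those divisors which split as a sum $D_i + D_{m-i}$; so modulo $I_m$ the divisor $D_m$ becomes ``indecomposable.'' To promote indecomposability to the subgroup condition, I would use Hopkins--Kuhn--Ravenel character theory. After base change to a sufficiently large $E^0$-algebra $L$, the ring $L \otimes_{E^0} E^0(B\Sigma_m)$ becomes the ring of functions on conjugacy classes of homomorphisms $\Z_p^h \to \Sigma_m$, equivalently isomorphism classes of $\Z_p^h$-sets of cardinality $m$. Under this identification the images of transfers from Young subgroups correspond to $\Z_p^h$-sets that split off a proper summand, so $L \otimes_{E^0} (E^0(B\Sigma_m)/I_m)$ is the ring of functions on \emph{transitive} $\Z_p^h$-sets of size $m$. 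Such transitive sets are finite quotients $\Z_p^h \twoheadrightarrow A$ of order $m$, which by Pontryagin duality correspond to order-$m$ subgroups of $\dual{\Z_p^h} \cong (\Q_p/\Z_p)^h$, and these in turn classify order-$m$ subgroup schemes of $\G$ over $L$-algebras. Hence the map $A_m \to E^0(B\Sigma_m)/I_m$ becomes an isomorphism after $- \otimes_{E^0} L$.

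Finally one must bootstrap from the isomorphism over $L$ to an isomorphism over $E^0$. The required ingredients are that both sides are finite free $E^0$-modules of the same rank (namely the number of index-$m$ subgroups of $\Z_p^h$, a classical partition-type count) and that $L$ is faithfully flat over $E^0$. The main obstacle, in my view, is the \emph{integral} identification of $D_m \bmod I_m$ with a subgroup scheme: abstractly one must check that the addition map on $\G$ restricts to $D_m \times D_m \to D_m$, and this closure property has to be extracted from the relations imposed by transfers, not just seen after rationalization. One route is to verify closure after base change to $L$ (where it is clear from the character picture) and then descend using faithful flatness together with the free $E^0$-module structure on both sides; another is a direct geometric argument exploiting the total power operation structure on $\prod_m E^0(B\Sigma_m)$, whose multiplicative ring structure should make the compatibility of $D_m$ with addition on $\G$ visible at the level of universal examples.
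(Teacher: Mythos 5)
This statement is cited from Strickland's paper; the paper under review does not prove it, so I am comparing your sketch against Strickland's actual argument. Your overall strategy — construct a degree-$m$ divisor on $\G$ from the tautological representation of $\Sigma_m$, show that modulo transfers it lands in the subgroup locus, and compare ranks via character theory — does track the shape of Strickland's proof, and you have correctly identified the crux as the integral verification of closure under addition.

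The proposed bootstrap, however, has a genuine gap: the Hopkins--Kuhn--Ravenel character ring $L$ is a $\Q$-algebra (one inverts $p$ to split $\G[p^k]$ into constant pieces), so while $E^0 \to L$ is flat it is \emph{not} faithfully flat, and the descent argument you lean on in both places — for the isomorphism and for the subgroup-closure check — is unavailable. Relatedly, even granting that both sides are finite free $E^0$-modules of the same rank, a map between free modules that becomes an isomorphism after inverting $p$ need not be an isomorphism (multiplication by $p$ on $\Z$ is the basic example); one must separately establish surjectivity or that the relevant determinant is a unit. This is precisely why Strickland's proof is long: the rational computation via characters is clean, but the integral content is carried by a hard freeness theorem for $E^0(B\Sigma_m)/I_m$ (his Theorem 8.6), by a careful analysis of the divisor's compatibility with the group structure carried out over $E^0$ itself, and by an induction using the power operation structure — not by descent from the rationalized character ring. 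Your second route (exploiting the total power operation on $\prod_m E^0(B\Sigma_m)$) is closer in spirit to what Strickland actually does, but as written it is a pointer rather than an argument.
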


Rezk later (\cite{congruence_criterion}, Section 11) provided an interpretation of these formal schemes as morphisms ``of degree m'' in a certain category scheme. In particular, the rings $E^0(B\Sigma_m)/I_m$ come with a pair of ring maps $s,t: E_0 \to E^0(B\Sigma_m)/I_m$ classifying the source and target of a morphism. The map $s$ comes from the standard $E_0$-algebra structure on cohomology, and the map $t$ comes from a certain power operation. This gives $E^0(B\Sigma_m)/I_m$ an $E_0$-$E_0$-bimodule structure, denoted as  
\begin{equation*}
\biggl.^t\!\left(\frac{E^0(B\Sigma_m)}{I_m}\right)\!\biggr.^s,
\end{equation*}
with $s$ regarded as defining a right $E_0$-module structure and $t$ regarded as defining a left module structure.

We then have that the tensor product 
\begin{equation*}
\biggl.^t\!\left(\frac{E^0(B\Sigma_m)}{I_m}\right)\!\biggr.^s \bigotimes \biggl.^t\!\left(\frac{E^0(B\Sigma_n)}{I_n}\right)\!\biggr.^s
\end{equation*}
classifies a composable pair of morphisms (of degree $m$ and $n$, respectively) in Rezk's category scheme.

One may wonder if this tensor product can be realized as an object coming from the Morava $E$-theory of certain groups. Contemplation of the power operation defining the left module structure on $E^0(B\Sigma_m)/I_m$ indicates an analogous quotient of the $E$-cohomology of a wreath product of symmetric groups as a likely candidate. We prove that this is in fact the case. 

\begin{theorem}\label{main_theorem_1} There is an ideal $I_{m,n}$ in $E^0(B\Sigma_m\wr\Sigma_n)$ analogous to Strickland's ideals. The quotient $E^0(B\Sigma_m\wr\Sigma_n)/I_{m,n}$ has an $E_0$-module structure $t$ in addition to the usual module structure $s$. There is then an isomorphism of bimodules

\[\biggl.^t\!\left(\frac{E^0(B\Sigma_m\wreath\Sigma_n)}{I_{m,n}}\right)\!\biggr.^s \cong \biggl.^t\!\left(\frac{E^0(B\Sigma_m)}{I_m}\right)\!\biggr.^s \bigotimes \biggl.^t\!\left(\frac{E^0(B\Sigma_n)}{I_n}\right)\!\biggr.^s.\]
Further, both sides are finite free right $E_0$-modules.
\end{theorem}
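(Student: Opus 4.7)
First I define the ideal and bimodule structure. Analogously to Strickland's $I_m$, I set $I_{m,n}$ to be the ideal in $E^0(B\Sigma_m\wreath\Sigma_n)$ generated by images of the stable transfers from $B((\Sigma_i\times\Sigma_{m-i})\wreath\Sigma_n)$ for $0<i<m$ (``within-block'' splits) and from $B((\Sigma_m\wreath\Sigma_j)\times(\Sigma_m\wreath\Sigma_{n-j}))$ for $0<j<n$ (``across-block'' splits). Viewing $\Sigma_m\wreath\Sigma_n \subset \Sigma_{mn}$, these are exactly the subgroups failing transitivity on either a single block of size $m$ or on the set of $n$ blocks, and the definition specializes correctly to Strickland's at the edges $m=1$ or $n=1$. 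The left $E_0$-module structure $t$ on the quotient is defined by composing Strickland's $t\colon E_0 \to E^0(B\Sigma_n)/I_n$ with the pullback induced by the quotient $\Sigma_m\wreath\Sigma_n\twoheadrightarrow\Sigma_n$; a preliminary lemma, checked by examining the image of $I_n$'s generators, is that this pullback descends to $E^0(B\Sigma_m\wreath\Sigma_n)/I_{m,n}$.

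Next I construct the comparison map. The $H_\infty$-structure of $E$ provides a total power operation $P_n\colon E^0(B\Sigma_m) \to E^0(B(\Sigma_m\wreath\Sigma_n))$ coming from the identification $B(\Sigma_m\wreath\Sigma_n)\simeq E\Sigma_n\times_{\Sigma_n}(B\Sigma_m)^n$. Together with the pullback $\pi^*\colon E^0(B\Sigma_n)\to E^0(B\Sigma_m\wreath\Sigma_n)$, this yields an $E_0$-bilinear pairing $(a,b)\mapsto P_n(a)\cdot\pi^*(b)$, and I would show it descends to a bimodule map $\Phi$ from the tensor product to the quotient. Well-definedness requires two verifications: that $P_n$ sends $I_m$ into $I_{m,n}$, which is the content of an Ando--Hopkins--Strickland formula expressing the power operation of a transfer as a sum of transfers from wreath-style subgroups, and that the pairing is balanced with respect to the $s$-action of $E_0$ on $E^0(B\Sigma_m)/I_m$ versus the $t$-action on $E^0(B\Sigma_n)/I_n$. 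The balancing is the geometric heart of the statement: it encodes the compatibility between the target of the first isogeny and the source of the second that defines composition in Rezk's category scheme.

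Finally I verify $\Phi$ is an isomorphism and deduce freeness. Strickland's theorem gives freeness of each factor on the left, so the tensor product is finite free over $E_0$. To show $\Phi$ is iso I would first argue surjectivity by showing $E^0(B\Sigma_m\wreath\Sigma_n)/I_{m,n}$ is generated as an $E_0$-module by products $P_n(a)\cdot\pi^*(b)$; this reduces, modulo the transfer generators of $I_{m,n}$, to the claim that the images of $P_n$ and $\pi^*$ together generate. Then I would match ranks using Hopkins--Kuhn--Ravenel character theory: both sides base-change to the free module on ordered pairs (subgroup of order $m$ in $\G$, subgroup of order $n$ in the quotient). Surjectivity plus rank agreement, with the source finite free, forces $\Phi$ to be iso and the target to be finite free as well. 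The main obstacle I anticipate is the well-definedness in the second paragraph: one must verify, via a careful Ando-type formula and double-coset bookkeeping for the embedding $\Sigma_m\wreath\Sigma_n\subset\Sigma_{mn}$, that $P_n$ applied to Strickland's transfer generators lies in the specific ideal $I_{m,n}$, and that the balancing relation holds exactly rather than up to a correction term.
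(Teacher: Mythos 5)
Your definition of the ideal $I_{m,n}$ matches the paper's (note $(\Sigma_m\wreath\Sigma_j)\times(\Sigma_m\wreath\Sigma_{n-j}) = \Sigma_m\wreath(\Sigma_j\times\Sigma_{n-j})$, which is the paper's $\Sigma_{(m,n),2,j}$), and the comparison map $\Phi(a\otimes b)=P_n(a)\cdot\pi^*(b)$ is the right candidate. However, your definition of the second module structure $t$ on $E^0(B\Sigma_m\wreath\Sigma_n)/I_{m,n}$ is incorrect. You define it as $\pi^*\circ \bar{P}_n$ where $\pi\colon\Sigma_m\wreath\Sigma_n\twoheadrightarrow\Sigma_n$; the correct definition (Example \ref{quotient_of_wreath_product_bimod} in the paper) is the composite of power operations $\bar{P}_n\circ\bar{P}_m\colon E_0\to E^0(B\Sigma_m)/I_m\to E^0(B\Sigma_m\wreath\Sigma_n)/I_{m,n}$. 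By naturality of $P_n$ applied to $B\Sigma_m\to\mathrm{pt}$, your map is $\pi^*\circ\bar{P}_n = \bar{P}_n\circ s$, whereas the required map is $\bar{P}_n\circ\bar{P}_m$; these differ unless $s=\bar{P}_m$ on $E^0(B\Sigma_m)/I_m$, which is false. The degenerate case $n=1$ makes this stark: $\Sigma_m\wreath\Sigma_1=\Sigma_m$, $I_{m,1}=I_m$, and your $t$ collapses to $s$ while the correct $t$ is Strickland's $\bar{P}_m$. Consequently $\Phi$ is a map of left $E_0$-modules with the \emph{correct} $t$, but not with yours; the left-module verification requires exactly $t_{\text{wreath}}(r) = P_n(\bar{P}_m(r))$. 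So you have the right map paired with the wrong module structure — an easy fix, but it also renders the sentence about the pullback descending moot as a definition of $t$ (that descent is still a true lemma, just not what defines $t$).

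Beyond that, your route is genuinely different from the paper's and the hard steps are only sketched. The paper never works with a direct comparison map on cohomology: it passes to $E_0$-linear duals $J_{\mathbf m}\subseteq E_0^\wedge(B\Sigma_{\mathbf m})$, identifies them as values of the left linearization $\leftlin\TT_{\mathbf m}$ of Rezk's algebraic approximation functors, and deduces the tensor decomposition from a chain rule for left linearization (Theorem \ref{Dual_chain_rule}), whose splitting hypothesis is verified using Strickland's freeness. Your plan replaces this with surjectivity of $\Phi$ plus a rank count via HKR character theory. Both of those steps are plausible but are precisely where the real work is hidden: the claim that the images of $P_n$ and $\pi^*$ generate $E^0(B\Sigma_m\wreath\Sigma_n)$ modulo $I_{m,n}$ needs an argument (it is morally equivalent to the paper's chain-rule/freeness computation), and the HKR rank comparison for the wreath-product quotient also requires identifying the transfer ideal in the rationalized character ring. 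If those could be supplied, your approach would avoid the left-linearization machinery entirely and give an attractive, more hands-on proof — but as written the two load-bearing steps are only asserted, and the balancing/well-definedness parts you flag as the main obstacle are actually the easy, formal part (the balancing follows directly from naturality of $P_n$, and $P_n(I_m)\subseteq I_{m,n}$ from Proposition \ref{transfers_and_total_power_ops} together with the sum formula for $P_n$).
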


In fact, we can extend this to iterated wreath products of symmetric groups to get our main theorem.

\begin{theorem}\label{main_theorem_2}
Let $\mathbf{m} = (m_1, \ldots, m_\ell)$ be an $\ell$-tuple of positive integers, and let $\Sigma_\mathbf{m} = \Sigma_{m_1}\wr\ldots\wr\Sigma_{m_\ell}$ be an iterated wreath product of symmetric groups. There are ideals $I_\mathbf{m}$ in $E^0(\Sigma_\mathbf{m})$ analogous to  Strickland's ideals. The quotient $E^0(\Sigma_\mathbf{m})/I_\mathbf{m}$ has an $E_0$-module structure $t$ in addition to the usual module structure $s$.
Then there is an isomorphism of $E_0$-$E_0$ bimodules
\[\biggl.^t\!\left(\frac{E^0(B\Sigma_{m_1} \wreath \ldots \wreath \Sigma_{m_\ell})}{I_{\mathbf{m}}}\right)\!\biggr.^s \cong \biggl.^t\!\left(\frac{E^0(B\Sigma_{m_1})}{I_{m_1}}\right)\!\biggr.^s \bigotimes \cdots \bigotimes \biggl.^t\!\left(\frac{E^0(B\Sigma_{m_\ell})}{I_{m_\ell}}\right)\!\biggr.^s.\]
Further, both sides are finite free right $E_0$-modules.
\end{theorem}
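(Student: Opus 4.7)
The natural approach is induction on $\ell$, with base cases $\ell = 1$ (Strickland's theorem) and $\ell = 2$ (Theorem \ref{main_theorem_1}). For the inductive step from $\ell - 1$ to $\ell$, I would use the associativity of iterated wreath products (as subgroups of $\Sigma_{m_1 \cdots m_\ell}$), writing $\Sigma_\mathbf{m} \cong \Sigma_{m_1} \wreath \Sigma_{\mathbf{m}'}$ with $\mathbf{m}' = (m_2, \ldots, m_\ell)$. By the inductive hypothesis, the group $\Sigma_{\mathbf{m}'}$ already carries a well-understood ideal $I_{\mathbf{m}'}$ whose quotient is finite free over $E_0$ and realized as a tensor product of Strickland quotients. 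The goal then reduces to proving the analogue of Theorem \ref{main_theorem_1} with $\Sigma_n$ replaced by $\Sigma_{\mathbf{m}'}$, i.e.\ obtaining a bimodule isomorphism between $\bigl.^t\!(E^0(B\Sigma_\mathbf{m})/I_\mathbf{m})^s$ and the tensor product $\bigl.^t\!(E^0(B\Sigma_{m_1})/I_{m_1})^s \bigotimes \bigl.^t\!(E^0(B\Sigma_{\mathbf{m}'})/I_{\mathbf{m}'})^s$.

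To do this, I would revisit the proof of Theorem \ref{main_theorem_1} and isolate the features of the inner factor $\Sigma_n$ that are actually used. I expect these to be: (a) presentation of $I_G$ as a sum of images of transfers along a distinguished family of subgroup inclusions of $G$; (b) finite freeness of $E^0(BG)/I_G$ as a right $E_0$-module; and (c) a compatibility between those transfer generators and the power operation defining the $t$-structure. By the inductive hypothesis, $E^0(B\Sigma_{\mathbf{m}'})/I_{\mathbf{m}'}$ satisfies (a) and (b); verifying (c) should reduce, via the tensor-product description, to compatibilities already established at each level by Strickland's work and Theorem \ref{main_theorem_1}. The ideal $I_\mathbf{m}$ is then defined as the analogue of $I_{m,n}$: the sum of transfers from the outer ``block-splitting'' subgroups of $\Sigma_{m_1} \wreath \Sigma_{\mathbf{m}'}$, together with transfers pulling up the generators of $I_{\mathbf{m}'}$ through the projection $\Sigma_{m_1} \wreath \Sigma_{\mathbf{m}'} \to \Sigma_{\mathbf{m}'}$. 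Both sides of the desired isomorphism are then finite free right $E_0$-modules by the inductive description combined with the freeness assertion of Theorem \ref{main_theorem_1}.

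The main obstacle I anticipate is step (c): the proof of Theorem \ref{main_theorem_1} likely relies on explicit Mackey-style double-coset computations specific to $\Sigma_n$, and one must check that the analogous calculations go through when the second factor is the more complex group $\Sigma_{\mathbf{m}'}$. A cleaner alternative, which I would try in parallel, is to reformulate Theorem \ref{main_theorem_1} abstractly at the outset---taking as input any ``Strickland pair'' $(G, I_G)$ satisfying (a)--(c) and producing such a pair for $G \wreath \Sigma_n$ (or symmetrically for $\Sigma_m \wreath H$)---so that Theorem \ref{main_theorem_2} follows by straightforward iteration. Either formulation should also handle the additional consistency check that the resulting $I_\mathbf{m}$ is intrinsic, i.e.\ independent of the order in which the wreath product is peeled (this is the essential content of the two ``peelings'' $\Sigma_{m_1} \wreath \Sigma_{\mathbf{m}'} \cong \Sigma_{\mathbf{m}''} \wreath \Sigma_{m_\ell}$ giving compatible descriptions of the same ideal).
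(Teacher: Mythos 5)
Your outer framework---inducting on $\ell$ and peeling off a wreath factor---matches the paper, but the substantive content of the inductive step is missing. You defer that step to an ``analogue of Theorem \ref{main_theorem_1} with $\Sigma_n$ replaced by $\Sigma_{\mathbf{m}'}$,'' but Theorem \ref{main_theorem_1} is not given an independent proof in the paper: it is a special case of Theorem \ref{main_theorem_2}, established by precisely the argument you would need. So the plan relabels the problem rather than solving it. Your conjecture that the step rests on ``Mackey-style double-coset computations'' is also not the mechanism the paper uses, and your hypothesis (b) (finite freeness of the quotient) is an \emph{output} of the theorem, not an available input; the paper has to establish it by a separate comparison of exact sequences at the end of the proof.

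The paper's actual route, entirely absent from your sketch, goes through duality: it works with $J_\mathbf{m} \subseteq E^{\wedge}_0(B\Sigma_\mathbf{m})$, defined as the joint kernel of the homology transfers from the $\Sigma_{\mathbf{m},t,j}$, and identifies ${}^s J_\mathbf{m}$ with the left linearization $\leftlin(\TT_\mathbf{m})(E_0)$ of a composite of Rezk's algebraic approximation functors (Corollary \ref{wreath_computation}), with matching bimodule structures (Propositions \ref{right_mod_struct_on_linearization} and \ref{module_structures_same}). The inductive step is then a chain rule for left linearization (Theorem \ref{Dual_chain_rule}), whose splitting hypothesis is verified by dualizing to cohomology and invoking Strickland's freeness of the one-variable quotients. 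One then dualizes back, using right-dualizability of tensor products (Proposition \ref{dual_of_tensor}) and a comparison of presentations to deduce both the freeness of $E^0(B\Sigma_\mathbf{m})/I_\mathbf{m}$ and its duality with $J_\mathbf{m}$. None of $J_\mathbf{m}$, the $\TT_m$, or the linearization/chain-rule machinery appear in your proposal, and they are not implied by the strategy you describe; without them you have no route to the isomorphism.
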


We prove this as Theorem \ref{main_theorem}. Here is a brief sketch of the ideas involved in the isomorphism. First, we take $E_0$-linear duals, and prove an isomorphism there. This is accomplished by identifying the duals of the objects as the outcome of a ``linearization'' process applied to certain functors of Rezk, related to Morava $E$-theory power operations. We then show we can dualize back and recover the intended isomorphism.

This paper is organized as follows. Section \ref{prelims} covers several useful preliminary notions, namely extended power constructions, transfer maps, power operations and bimodule duality. This section also collects several useful propositions about these notions from the literature, to have them at our disposal.

 Section \ref{cohom_wreath_products} collects some facts about the Morava $E$-theory of groups. It also introduces the ideals $I_\mathbf{m}$ and the quotients $E^0(B\Sigma_\mathbf{m})/I_\mathbf{m}$ from the statement of the main theorem. 

Section \ref{the_functors_TT} recalls some functors related to Morava $E$-theory power operations, originally defined by Rezk in \cite{congruence_criterion}. Section \ref{left_linearization_section} defines a notion of left linearization of functors following Johnson-McCarthy \cite{johnson_mccarthy} and Rezk \cite{power_ops_koszul}, and proves a ``chain rule'' for left linearization. Section \ref{bimodules} introduces the objects that are putatively dual to the quotients $E^0(B\Sigma_\mathbf{m})/I_\mathbf{m}$, and identifies them as certain values of the left linearization of Rezk's functors. Finally, Section \ref{proof_of_main} finishes the proof of the main theorem.

\subsection*{Acknowledgments}
The bulk of this work was carried out while the author was a graduate student under the supervision of Charles Rezk. We would like to thank him for his guidance and his blessing to pursue this problem, as well as the partial support from his NSF grant DMS-1406121. The remainder of the work was carried out at the University of Haifa, and we would also like to acknowledge the support of Israel Science Foundation grant 1138/16.

\section{Preliminaries}\label{prelims}

\subsection{Notation and Conventions}
If $E$ is a spectrum, we will generally write $E_*$ for $\pi_* E$, and $E_0$ for $\pi_0 E$. 

If $X$ is a space, we use $\Delta$ to refer to both the diagonal map of spaces $X \to X\times X$  and the induced map
$\Sigma_+^\infty X \to \Sigma_+^\infty X \wedge \Sigma_+^\infty X$.

There are at least two different conventions for wreath products. Both would be reasonable in the context of this paper, so we take a moment to be clear. 
\begin{defn}
Let $G\leq\Sigma_m$ be a subgroup of a symmetric group, and $H$ be any finite group. The wreath product $H\wr G$ is the semidirect product which sits in the following short exact sequence.
\begin{equation*}
	1 \to H^m \to H\wr G \to G \to 1
\end{equation*}
The action of $G$ on the product $H^m$ comes from permuting coordinates via the inclusion $G\leq \Sigma_m$.
\end{defn}

All groups appearing in this paper will implicitly (or explicitly) come with a specified embedding into a symmetric group. We make special use of products and wreath products, so here are the embeddings we have in mind for those.

\begin{example}\label{embeddings}
Let $X_m$ be an ordered set with $m$ elements, and let $\Sigma_m$ act on $X_m$ by reordering the elements. If $i+j =m$, we can embed $\Sigma_i$ into $\Sigma_m$ as those elements which permute just the first $i$ elements of $X_m$. Similarly we can embed $\Sigma_j$ as those elements which permute just the last $j$ elements. Together these give our embedding $\Sigma_i \times \Sigma_j \leq \Sigma_{i+j}= \Sigma_m.$ Of course, if $G\leq \Sigma_i$ and $H\leq \Sigma_j$ we can compose inclusions $G\times H \leq \Sigma_i\times \Sigma_j \leq \Sigma_{i+j}$.

For wreath products, we consider $\Sigma_m\wr\Sigma_n$ as a subgroup of $\Sigma_{mn}$ as follows. Divide $X_{mn}$ into $n$ blocks of $m$ letters each. Then we can make $\Sigma_m^n$ act on $X_{mn}$ by having each factor act on one of the blocks separately. We can also have $\Sigma_n$ act on $X_{mn}$ by permuting the blocks, but leaving order alone within each block. Combining these give an action of $\Sigma_m\wr\Sigma_n$ on $X_{mn}$ and so an embedding $\Sigma_m\wr\Sigma_n$ into $\Sigma_{mn}$. As before, if $H \leq \Sigma_m$ and $G\leq \Sigma_n$ we can compose to get an inclusion $H\wr G \leq \Sigma_m\wr\Sigma_n \leq \Sigma_{mn}$
\end{example}

\subsection{Morava $E$-theory}
Let $\kappa$ be a perfect field of characteristic $p>0$ and $\Gamma$ a formal group of height $h< \infty$ over $\kappa$. In this situation, there is a notion of deformation of $\Gamma$. Lubin and Tate \cite{Lubin_Tate} showed that there exists a universal such deformation defined over a complete local ring $R_{\operatorname{LT}}$ with residue field $\kappa$, and Morava observed that there is an even-periodic homotopy-commutative ring spectrum $E$ with $\pi_0(E) \simeq R_{\operatorname{LT}}$ whose associated formal group is the universal deformation of $\Gamma$. We call this Morava $E$-theory; it is also sometimes called the Lubin-Tate spectrum. Other authors often use the letter $n$ to denote height but we will need several indexing variables and instead use the letter $h$ for height.

Goerss, Hopkins and Miller \cite{goerss_hopkins} showed that $E$ admits an essentially unique $E_\infty$ ring structure. For more on Morava $E$-theory, see \cite{charles_hopkins_miller}. As is customary, we supress all dependence of Morava $E$-theory on anything but the prime $p$ and the height $h$. Both $p$ and $h$ will be fixed throughout this paper, so we generally supress those as well, and refer to any of these $E_\infty$ ring spectra as $E$. Unless specifically stated otherwise, the letter $E$ will only mean this.

There is a related cohomology theory, called Morava $K$-theory and generally denoted $K(h)$. We make use of Bousfield localization with respect to $K(h)$, and denote it as $L_{K(h)}$. For more, see \cite{hovey_strickland}. We also make use of a variant of homology using this localization.

\begin{defn}\label{completed_homology}
If $X$ is a spectrum (or a space) the completed $E$-homology $E^{\wedge}_\ast(X)$ is defined by the formula
\[ E^{\wedge}_\ast(X) = \pi_\ast L_{K(h)}(E\wedge X).\]
\end{defn}

We also use $E^{\wedge}_0(X)$ for $\pi_0$ of the same localization. This does not quite define a homology theory, but is often more convenient than the unlocalized version.

\subsection{Extended powers}\label{extended_powers_section}
Let $E$ be any $E_\infty$ ring spectrum. We will be thinking of a Morava $E$-theory spectrum, though nothing in this subsection will need anything specific about that case. We cite \cite{h_infinity_book} several times in this section. Though the statements there are only for $E$ the sphere spectrum, modern symmetric monoidal categories of spectra (for instance, that of \cite{EKMM}) allow their proofs to go through verbatim.

\begin{defn}
Let $M$ be an $E$-module. The $m$-\emph{th extended  power} of $M$ is the homotopy orbits $(M^{\wedge_E m})_{h\Sigma_m}$ of the $m$-th smash power of $M$ under the action of $\Sigma_m$ which permutes the factors. We will call this $\PP^E_m(M)$, or just $\PP_m(M)$ if $E$ is clear. If $G \leq \Sigma_m$, we can take the $G$-homotopy orbits instead, and we will call this functor $\PP^E_G$.
\end{defn}

Note that $\PP^E_G$ may depend on the embedding $G \leq \Sigma_m$. 

The same formula defines a space-level construction, which we will also denote by $\PP_G$. 

Here are a few facts about extended powers. 

\begin{prop}[\cite{h_infinity_book} I.2.2]
Let  $X$  be an unpointed space and $G\leq \Sigma_m$. Then there is an isomorphism $\Sigma_+^\infty(\PP_G(X)) \simeq \PP_G^\mathbb{S}(\Sigma_+^\infty(X))$
\end{prop}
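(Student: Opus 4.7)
The plan is to exploit two well-known features of the stable suspension functor $\sigmaplusinfinity$: it is strong symmetric monoidal, and (as a left Quillen adjoint to the underlying infinite loop space functor) it commutes with homotopy colimits. The extended power $\PP_G(X) = (X^m)_{hG}$ is built from the $m$-fold cartesian power followed by a homotopy orbit, and the extended power $\PP^\Sphere_G$ is built analogously using smash power and spectrum-level homotopy orbits, so the statement is the assertion that $\sigmaplusinfinity$ commutes with each of these two constructions compatibly.

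First I would observe that $\sigmaplusinfinity(X\times Y) \simeq \sigmaplusinfinity X \sm \sigmaplusinfinity Y$ naturally in $X,Y$; in any of the modern symmetric monoidal model categories of spectra (e.g.\ EKMM) this natural isomorphism can be upgraded to a strong symmetric monoidal equivalence. Iterating and tracking the symmetric structure gives an equivalence $\sigmaplusinfinity(X^m) \simeq (\sigmaplusinfinity X)^{\sm m}$ that is $\Sigma_m$-equivariant, where $\Sigma_m$ acts by permuting cartesian factors on the left and by permuting smash factors on the right. Restricting the action along the inclusion $G \leq \Sigma_m$ produces a $G$-equivariant equivalence of the two $m$-fold powers.

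Next I would take homotopy orbits on both sides. Since $\sigmaplusinfinity$ is a left adjoint (to $\loopsinfinity$), it preserves all homotopy colimits, and in particular it commutes with the $G$-homotopy-orbit construction when that is expressed as the bar-construction $EG_+ \sm_G (-)$ or, at the level of spaces, as $EG\times_G(-)$. Concretely, $\sigmaplusinfinity(EG\times_G X^m) \simeq (EG_+ \sm \sigmaplusinfinity(X^m))_{hG}$; combining this with the equivariant equivalence of the previous paragraph yields $\sigmaplusinfinity(\PP_G(X)) \simeq (EG_+ \sm (\sigmaplusinfinity X)^{\sm m})_{hG} \simeq \PP^\Sphere_G(\sigmaplusinfinity X)$.

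The only genuine issue is bookkeeping: making precise that the monoidal equivalence of $\sigmaplusinfinity(X^m)$ with $(\sigmaplusinfinity X)^{\sm m}$ is honestly $\Sigma_m$-equivariant (not just equivariant up to coherence), and that the homotopy-orbit construction used for spaces matches the one for spectra under $\sigmaplusinfinity$. Both points are built into any modern symmetric monoidal framework for spectra, which is exactly the remark the paper makes just before citing \cite{h_infinity_book}; with that in hand the argument is formal.
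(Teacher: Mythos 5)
The paper does not prove this proposition; it simply cites \cite{h_infinity_book} I.2.2, prefaced by the remark that the statements there (for the sphere spectrum) carry over verbatim in any modern symmetric monoidal model category of spectra such as \cite{EKMM}. Your argument is a correct reconstruction of the standard proof and is in exactly the spirit that remark anticipates: the strong symmetric monoidal structure on $\Sigma_+^\infty$ coming from $(X\times Y)_+\cong X_+\sm Y_+$ gives a $\Sigma_m$-equivariant (hence $G$-equivariant) equivalence $\Sigma_+^\infty(X^m)\simeq(\Sigma_+^\infty X)^{\sm m}$, and since $\Sigma_+^\infty$ is a left Quillen functor it preserves the bar construction computing homotopy orbits, whence $\Sigma_+^\infty(EG\times_G X^m)\simeq EG_+\sm_G(\Sigma_+^\infty X)^{\sm m}$. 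You are also right to flag the genuine content as the rectification of the monoidal coherence to a point-set-level equivariant map; this is precisely what \cite{EKMM}-style foundations supply, and it is the reason the original BMMS proof, which was set up in a coordinatized framework, ports over cleanly. So your proof is correct, and the only ``difference'' from the paper is that the paper delegates the argument to the literature while you actually carry it out.
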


\begin{prop}\label{extended_power_of_unit}
$\PP_m^E(E) \cong E \wedge \Sigma_+^\infty B\Sigma_m$.
\end{prop}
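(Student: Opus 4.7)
The plan is to reduce the statement to the corresponding fact for the sphere spectrum, at which point the preceding space-level proposition applied to the one-point space finishes things. Concretely, I would first establish a change-of-rings equivalence for extended powers, and then identify $\PP_m^\Sphere(\Sphere)$ using the previous proposition.

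The key input is a natural $\Sigma_m$-equivariant equivalence
\[ (E \sm Y)^{\sm_E m} \simeq E \sm Y^{\sm m} \]
for any spectrum $Y$, where $\Sigma_m$ acts on both sides by permuting the factors that depend on $Y$. This comes from iterating the collapse $(E \sm Y)\sm_E(E\sm Z) \simeq E \sm Y \sm Z$ of the repeated unit, which is part of the symmetric monoidal structure on $E$-modules. Since homotopy orbits commute with smashing against the fixed spectrum $E$, this gives
\[ \PP_m^E(E \sm Y) = \left((E \sm Y)^{\sm_E m}\right)_{h\Sigma_m} \simeq E \sm (Y^{\sm m})_{h\Sigma_m} = E \sm \PP_m^\Sphere(Y). \]
Setting $Y = \Sphere$ reduces the problem to identifying $\PP_m^\Sphere(\Sphere)$.

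For that identification I would apply the preceding proposition to the unpointed one-point space $X = *$: since $\Sigma_+^\infty * \simeq \Sphere$ and $\PP_m(*) = (*^m)_{h\Sigma_m} = B\Sigma_m$, the proposition yields
\[ \PP_m^\Sphere(\Sphere) \simeq \PP_m^\Sphere(\Sigma_+^\infty *) \simeq \Sigma_+^\infty \PP_m(*) = \Sigma_+^\infty B\Sigma_m. \]
Combining this with the change-of-rings equivalence above produces the claimed isomorphism $\PP_m^E(E) \cong E \sm \Sigma_+^\infty B\Sigma_m$.

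The only delicate step is verifying the $\Sigma_m$-equivariance of the change-of-rings equivalence: one needs the repeated-unit collapses to assemble into a genuine equivariant equivalence rather than merely a nonequivariant one. In a point-set symmetric monoidal category of spectra such as \cite{EKMM}, this coherence is built into the framework the paper has already committed to, so the argument reduces to citing it; in a setting where one has to negotiate strict versus homotopy-coherent commutativity of the unit, this is where essentially all of the work would live.
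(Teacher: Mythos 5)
Your proof is correct and takes essentially the same approach as the paper's (one-line) argument: the paper simply observes that $E^{\wedge_E m} \simeq E$ and unwinds the definition of homotopy orbits to get $E \wedge \Sigma_+^\infty B\Sigma_m$. You have repackaged this as a change-of-rings equivalence $\PP_m^E(E \wedge Y) \simeq E \wedge \PP_m^\Sphere(Y)$ specialized at $Y = \Sphere$, but the underlying content — $E^{\wedge_E m}\simeq E$ with the permutation action becoming trivial, so the homotopy orbits contribute $B\Sigma_m$ — is the same, just stated with more scaffolding.
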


\begin{proof}
This follows from the definition of $\PP_m^E$ and the fact that $E^{\wedge_E m}$ is isomorphic to $E$.
\end{proof}

Let $\Lambda_{m,k}$ be the set of $(k+1)$-tuples of nonnegative integers whose sum is $m$: 
\[
\Lambda_{m,k} = \left\{ (\lambda_0, \ldots, \lambda_k) \in \N^k \mid \sum_{i=0}^k \lambda_i = m \right\}.
\]

\begin{prop}[\cite{h_infinity_book} II.1.1]\label{extended_power_of_wedge} There is a natural equivalence
\[\PP^E_m(M \vee N) \xleftarrow{\sim} \bigvee_{i+j = m}\PP^E_i(M) \wedge_E \PP^E_j(N).\]

Inductively this gives natural equivalences
\[\PP^E_m\left(\bigvee_{i=0}^k M_i\right) \xleftarrow{\sim} \bigvee_{\lambda\in \Lambda_{m,k}} \bigwedge_{i=0}^k \PP_{\lambda_i}^E(M_i),\]
where the smash product on the right hand side is taken in the category of $E$-modules
\end{prop}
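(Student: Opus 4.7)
The plan is to prove the base case ($k=1$) by a direct combinatorial decomposition of the $m$-fold smash power, and then get the general case by induction. Working in a good symmetric monoidal category of $E$-modules (e.g.\ EKMM), I would first use distributivity of $\wedge_E$ over $\vee$ to expand
\[
(M \vee N)^{\wedge_E m} \;\simeq\; \bigvee_{f \colon \{1,\ldots,m\} \to \{0,1\}} X_{f(1)} \wedge_E \cdots \wedge_E X_{f(m)},
\]
where $X_0 = M$ and $X_1 = N$. The group $\Sigma_m$ acts on the indexing set $\{0,1\}^m$ and compatibly on the wedge by permuting smash factors.

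Next I would partition the indexing set by orbits: the $\Sigma_m$-orbits on $\{0,1\}^m$ are in bijection with pairs $(i,j)$ satisfying $i+j=m$ (via $(|f^{-1}(0)|, |f^{-1}(1)|)$). Picking the representative $f_{i,j}$ that sends the first $i$ coordinates to $0$ and the last $j$ to $1$, the stabilizer is precisely $\Sigma_i \times \Sigma_j$, embedded in $\Sigma_m$ as in Example \ref{embeddings}. Thus as $\Sigma_m$-equivariant $E$-modules,
\[
(M \vee N)^{\wedge_E m} \;\simeq\; \bigvee_{i+j=m} \Sigma_m{}_+ \wedge_{\Sigma_i \times \Sigma_j} \bigl(M^{\wedge_E i} \wedge_E N^{\wedge_E j}\bigr).
\]
Taking $\Sigma_m$-homotopy orbits and applying the standard identity $(\Sigma_m{}_+ \wedge_H Y)_{h\Sigma_m} \simeq Y_{hH}$ yields $\bigvee_{i+j=m} (M^{\wedge_E i} \wedge_E N^{\wedge_E j})_{h(\Sigma_i \times \Sigma_j)}$. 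Since $\Sigma_i$ acts only on the $M$-factors and $\Sigma_j$ only on the $N$-factors, and since $\wedge_E$ commutes with homotopy colimits in each variable, this further simplifies to $\bigvee_{i+j=m} \PP_i^E(M) \wedge_E \PP_j^E(N)$, which is the desired equivalence.

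The inductive statement then follows by writing $\bigvee_{i=0}^k M_i \simeq M_0 \vee \bigl(\bigvee_{i=1}^k M_i\bigr)$, applying the base case, and using the induction hypothesis on $\PP_{\lambda_0'}^E\bigl(\bigvee_{i=1}^k M_i\bigr)$ for each $\lambda_0'$; the $(k+1)$-tuples $\lambda \in \Lambda_{m,k}$ reassemble exactly because $\Lambda_{m,k}$ fibers over $\N$ via $\lambda \mapsto \lambda_0$ with fibers $\Lambda_{m-\lambda_0,k-1}$.

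The only genuine obstacle is ensuring that all these equivalences can be made strictly enough to be simultaneously $\Sigma_m$-equivariant in a point-set model; as the excerpt notes immediately before the subsection, modern symmetric monoidal model categories of spectra make the proofs of \cite{h_infinity_book} go through verbatim, so I would cite that framework rather than redo the coherence verifications.
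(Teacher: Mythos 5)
Your proof is correct. The paper does not actually give an argument here---it simply cites \cite{h_infinity_book} II.1.1 and relies on the remark at the top of Section~\ref{extended_powers_section} that the proofs in that reference carry over verbatim to $E$-module categories---and the orbit decomposition of $(M \vee N)^{\wedge_E m}$ you spell out, indexed by $\Sigma_m$-orbits with stabilizers $\Sigma_i \times \Sigma_j$ and reduced via $(\Sigma_m{}_+ \wedge_H Y)_{h\Sigma_m} \simeq Y_{hH}$ and the fact that $\wedge_E$ commutes with homotopy colimits, is exactly the argument underlying that cited result, so this is the same approach made explicit.
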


\begin{prop} \label{comp_P}
If $H\leq \Sigma_m$ and $G \leq\Sigma_n$, the inclusions of Example \ref{embeddings} give isomorphisms 
\[\PP^E_G \circ \PP^E_H \simeq \PP^E_{H\wr G}.\]
and 
\[\PP^E_{G\times H} \simeq \PP^E_G \wedge_E \PP^E_H.\]
\end{prop}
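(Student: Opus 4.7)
The plan is to derive both isomorphisms directly from the definition of extended powers, using two standard compatibilities of homotopy orbits. The first is a monoidality property: if $K$ acts on an $E$-module $X$ and $L$ acts on an $E$-module $Y$, then $(X \wedge_E Y)_{h(K \times L)} \simeq X_{hK} \wedge_E Y_{hL}$. The second is compatibility with extensions: given a short exact sequence $1 \to N \to \Gamma \to Q \to 1$ and an $E$-module $X$ with $\Gamma$-action, iterated orbits agree with total orbits, $(X_{hN})_{hQ} \simeq X_{h\Gamma}$.

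For the product statement, under the embedding of Example \ref{embeddings} the group $G \times H$ acts on $M^{\wedge_E(n+m)}$ with $G$ permuting the first $n$ smash factors and $H$ permuting the last $m$. Regrouping gives an equivariant equivalence $M^{\wedge_E(n+m)} \simeq M^{\wedge_E n} \wedge_E M^{\wedge_E m}$ in which $G$ acts only on the left factor and $H$ only on the right. Applying the first compatibility yields
\[(M^{\wedge_E(n+m)})_{h(G \times H)} \simeq (M^{\wedge_E n})_{hG} \wedge_E (M^{\wedge_E m})_{hH} = \PP^E_G(M) \wedge_E \PP^E_H(M).\]

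For the wreath statement, I would unfold the composition:
\[\PP^E_G(\PP^E_H(M)) = \left(\left((M^{\wedge_E m})_{hH}\right)^{\wedge_E n}\right)_{hG}.\]
Applying the first compatibility $n$ times identifies $((M^{\wedge_E m})_{hH})^{\wedge_E n}$ with $(M^{\wedge_E mn})_{hH^n}$, where $H^n$ acts block-by-block on $n$ blocks of $m$ consecutive smash factors. The outer $\Sigma_n$-action permuting the $n$ copies of $\PP^E_H(M)$ corresponds to the action on $M^{\wedge_E mn}$ that permutes the $n$ blocks while preserving the internal order of each block; together with the $H^n$-action this reproduces precisely the restriction to $H \wr \Sigma_n$ of the $\Sigma_{mn}$-action described in Example \ref{embeddings}. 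Restricting further to $G \leq \Sigma_n$ gives the embedding $H \wr G \leq \Sigma_{mn}$, and the extension $1 \to H^n \to H \wr G \to G \to 1$ combined with the second compatibility yields
\[\left((M^{\wedge_E mn})_{hH^n}\right)_{hG} \simeq (M^{\wedge_E mn})_{h(H \wr G)} = \PP^E_{H \wr G}(M).\]

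The main obstacle is the equivariant bookkeeping in the wreath case: one must verify carefully that the canonical identification between smash factors of $(M^{\wedge_E m})^{\wedge_E n}$ and $M^{\wedge_E mn}$ intertwines the natural $(H^n \rtimes \Sigma_n)$-action on the former with the restriction from $\Sigma_{mn}$ on the latter as specified in Example \ref{embeddings}. Once this matching of actions is pinned down, both isomorphisms follow formally from the two compatibilities of homotopy orbits listed at the start.
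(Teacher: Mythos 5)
Your argument is correct, and it does the job. The paper's own ``proof'' is just a citation: the wreath isomorphism is the map $\beta$ and the product isomorphism is the map $\alpha$ from page 5 of \cite{h_infinity_book}, so the paper delegates all the content to that reference. What you have written out is essentially the standard proof of those cited isomorphisms, built from the two facts you isolate: that $\wedge_E$ commutes with homotopy colimits (so $(X \wedge_E Y)_{h(K\times L)} \simeq X_{hK}\wedge_E Y_{hL}$), and the Fubini/transitivity formula $(X_{hN})_{hQ}\simeq X_{h\Gamma}$ for an extension $1\to N\to\Gamma\to Q\to 1$. In the wreath case the second fact applies because $H^n$ is normal in $H\wr G$ with quotient $G$, and the residual $G$-action on $(M^{\wedge_E mn})_{hH^n}$ is the one induced by permuting the blocks, which you correctly match against the embedding of Example \ref{embeddings}. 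The ``equivariant bookkeeping'' you flag is exactly where the care is needed, but once one fixes the identification $(M^{\wedge_E m})^{\wedge_E n}\cong M^{\wedge_E mn}$ compatible with the block decomposition, the two actions agree by the wreath relation $\sigma\cdot(h_1,\dots,h_n)=(h_{\sigma^{-1}(1)},\dots,h_{\sigma^{-1}(n)})\cdot\sigma$. So your proof is a self-contained expansion of what the paper cites; it buys transparency at the cost of the bookkeeping you already identify, while the paper's citation is terser but leans on BMMS having done precisely this work.
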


\begin{proof}
The first statement is a special case of the isomorphism $\beta$ on page 5 of \cite{h_infinity_book}. The second is a special case of the isomorphism $\alpha$ on the same page.
\end{proof}

The following proposition is helpful when considering the interaction between localization and extended powers.

\begin{prop}[\cite{congruence_criterion} 3.16]\label{power_of_localization}
The functors $\PP_m$ preserve $K(h)$-homology isomorphisms. In particular, there is a natural isomorphism $L_{K(h)}\PP_m \to L_{K(h)}\PP_m L_{K(h)}$.
\end{prop}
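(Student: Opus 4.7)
The plan is to verify that $\PP_m$ preserves $K(h)$-equivalences by establishing this separately for its two constituent constructions: the $m$-fold $E$-smash power $M \mapsto M^{\wedge_E m}$ and the homotopy orbits $(-)_{h\Sigma_m}$. Let $f \colon M \to N$ be a $K(h)$-equivalence of $E$-modules and let $C$ denote its cofiber, so $K(h) \wedge C \simeq 0$. Observe that the second statement of the proposition is a formal consequence of the first, applied to the unit map $M \to L_{K(h)} M$: this unit is tautologically a $K(h)$-equivalence, so $\PP_m(M) \to \PP_m(L_{K(h)} M)$ is a $K(h)$-equivalence, and thus an isomorphism after applying $L_{K(h)}$.

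For the smash power step, factor $f^{\wedge_E m}$ as a composite of maps of the form $\mathrm{id}^{\wedge_E i} \wedge_E f \wedge_E \mathrm{id}^{\wedge_E (m-1-i)}$ going from $N^{\wedge_E i} \wedge_E M^{\wedge_E (m-i)}$ to $N^{\wedge_E (i+1)} \wedge_E M^{\wedge_E (m-i-1)}$, for $i = 0, \ldots, m-1$. It therefore suffices to show that smashing over $E$ with an arbitrary $E$-module $L$ preserves $K(h)$-equivalences. Since the cofiber of $f \wedge_E \mathrm{id}_L$ is $C \wedge_E L$, associativity yields $K(h) \wedge (C \wedge_E L) \simeq (K(h) \wedge C) \wedge_E L \simeq 0$, giving the claim.

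For the homotopy orbits step, consider a $\Sigma_m$-equivariant map $g \colon X \to Y$ of $E$-modules that is a non-equivariant $K(h)$-equivalence, and use the skeletal filtration of $E\Sigma_{m+}$ as a free $\Sigma_m$-CW spectrum. Smashing $X$ with this filtration over $\Sigma_m$ produces a filtration of $X_{h\Sigma_m}$ whose successive subquotients are wedges of suspensions of $X$, indexed by cells of $B\Sigma_m$. Naturality in $g$ and the fact that $K(h)$-equivalences are closed under wedges, suspensions, cofiber sequences, and sequential colimits of inclusions then imply that $g_{h\Sigma_m}$ is a $K(h)$-equivalence. Combining the two steps gives that $\PP_m(f) = (f^{\wedge_E m})_{h\Sigma_m}$ is a $K(h)$-equivalence. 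I expect the main subtlety to lie in the homotopy orbits step, specifically in carefully justifying stability of $K(h)$-equivalences under the relevant sequential colimits; this is standard but deserves explicit mention, e.g. by invoking that $K(h)_\ast$ commutes with filtered colimits on a suitable model category of spectra.
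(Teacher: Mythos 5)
Your argument is correct, and the decomposition into the two steps (smash power, then homotopy orbits) together with the cofiber-and-filtration argument is the standard proof of this fact. Note that the paper itself does not prove this statement --- it is cited directly from Rezk's \cite{congruence_criterion}, Proposition 3.16 --- so there is no in-paper proof to compare against; but your route is the expected one and matches the spirit of Rezk's argument.

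Two small points worth being explicit about. First, in the smash-power step, the associativity $K(h)\wedge (C\wedge_E L) \simeq (K(h)\wedge C)\wedge_E L$ is using that $K(h)\wedge C$ inherits its $E$-module structure from the right-hand factor $C$; this is fine, but it is worth saying, since $K(h)$ itself need not be an $E$-module in the argument. Second, in the homotopy-orbits step, the wedges appearing in the associated graded of the filtration are indexed by the cells of $B\Sigma_m$ in a fixed dimension, which are finitely many, so no infinite-wedge issue arises; the only place an infinite colimit enters is the passage to the full filtration, where (as you note) one invokes that $K(h)_*$ commutes with sequential homotopy colimits. With those points made explicit, the proof is complete.
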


Our last fact about extended powers is crucial to the whole theory of power operations for Morava $E$-theory. We emphasize that it is not formal; it uses some very specific things about Morava $E$-theory. We first recall the following definition.

\begin{defn}
An $E$-module $M$ is a \emph{finite free} $E$-module if it is a finite wedge of suspensions of $E$. Equivalently, $M$ is a finite free $E$-module if $\pi_*M$ is a finite free $E_*$-module.
\end{defn}

\begin{prop}[\cite{congruence_criterion}, Proposition 3.12]\label{finite_free_equivalence}
Taking homotopy groups $\pi_*$ is an equivalence of categories between the homotopy category of finite free $E$-modules and the category of finite free $E_*$-modules. Similarly, $\pi_0$ is an equivalence between the homotopy category of finite free $E$-modules concentrated in even degrees and the category of finite free $E_0$-modules.
\end{prop}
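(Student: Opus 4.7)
The plan is to build the inverse equivalence explicitly on objects and to verify fully faithfulness by reducing to a single suspension of $E$ on each side, where both sides compute to the same piece of $E_*$. For essential surjectivity, any finite free graded $E_*$-module $N_*$ admits a homogeneous basis concentrated in finitely many degrees $n_1, \ldots, n_k$, so $N_* \cong \bigoplus_i E_*[n_i]$. Taking $M = \bigvee_i \Sigma^{n_i} E$ gives a finite free $E$-module with $\pi_* M \cong N_*$, and any two choices of basis yield isomorphic $E$-modules in the homotopy category, so every finite free $E_*$-module is hit.

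To verify $\pi_*$ is fully faithful, I would observe that both $[-,-]_{E}$ and $\Hom_{E_*}(\pi_*-,\pi_*-)$ send finite wedges (respectively direct sums) in each variable to finite direct sums, reducing the claim to $M = \Sigma^m E$ and $N = \Sigma^n E$. The left side unfolds as
\[ [\Sigma^m E, \Sigma^n E]_{E} = \pi_0 F_E(\Sigma^m E, \Sigma^n E) = \pi_{m-n} F_E(E, E) = \pi_{m-n} E = E_{m-n}, \]
using that $E$ is the unit for $\wedge_E$, so $F_E(E, -) \simeq \mathrm{id}$. The right side is $\Hom_{E_*}(E_*[m], E_*[n]) = E_{m-n}$, and unwinding the comparison map shows it is the identity on $E_{m-n}$, hence a bijection.

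For the second statement about the even-concentrated subcategory, even-periodicity of Morava $E$-theory supplies a unit $u \in \pi_2 E$, so multiplication by $u^k$ gives canonical isomorphisms $\pi_0(\Sigma^{2k}E) \cong E_0$ as $E_0$-modules. Running the same essential-surjectivity and fully-faithfulness arguments, with every degree shift absorbed by the periodicity isomorphism, yields the $\pi_0$ equivalence. The main subtlety throughout is really just the bookkeeping of grading shifts; once one has $F_E(E, -) \simeq \mathrm{id}$ from the unit axiom in $E$-modules and the even-periodicity unit $u$ in hand, no further input about $E$ is required and the rest is formal.
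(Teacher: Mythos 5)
Your argument is correct and, since the paper merely cites this as Proposition~3.12 of Rezk's \cite{congruence_criterion} without reproducing a proof, there is no in-paper proof to compare against. Your proof is the standard one: essential surjectivity by realizing a homogeneous basis as a wedge of suspensions; full faithfulness by bilinearity of both $[-,-]_E$ and $\Hom_{E_*}(\pi_*-,\pi_*-)$ over finite wedges/direct sums, reducing to $[\Sigma^m E,\Sigma^n E]_E \cong E_{m-n} \cong \Hom_{E_*}(E_*[m],E_*[n])$ via $F_E(E,-)\simeq\mathrm{id}$; and then the $\pi_0$ statement by observing that even periodicity lets one absorb all (even) degree shifts, so every object is equivalent to $E^{\vee r}$ and the argument collapses to ranks. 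One small stylistic note: the remark in the essential-surjectivity step that ``any two choices of basis yield isomorphic $E$-modules'' is not needed for essential surjectivity (a single lift suffices), though it does no harm; and when you say the comparison map is ``the identity on $E_{m-n}$,'' the more precise statement is that under the two identifications with $E_{m-n}$ the map sends $a$ (viewed as a map $\Sigma^{m-n}E\to E$ given by right multiplication by $a$) to the induced map on $\pi_*$, which is again right multiplication by $a$ --- so it is an isomorphism, which is all that is required.
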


\begin{prop}[\cite{congruence_criterion} Proposition 3.17]
If $M$ is a finite free $E$-module, then so is  $L_{K(h)}\PP_m(M)$. 
\end{prop}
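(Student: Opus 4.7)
The plan is to reduce to a handful of base cases via the splitting theorems already recorded and Morava $E$-theory's periodicity, and then to invoke Strickland's calculation of $E^*(B\Sigma_m)$ for the base case. Since $M$ is finite free, we may write $M \simeq \bigvee_{i=1}^k \Sigma^{n_i}E$. Applying Proposition \ref{extended_power_of_wedge}, we get
\[\PP_m(M) \simeq \bigvee_{\lambda \in \Lambda_{m,k-1}} \bigwedge_{i=1}^k \PP_{\lambda_i}^E(\Sigma^{n_i}E).\]
Since $L_{K(h)}$ commutes with finite wedges, and since (using Proposition \ref{finite_free_equivalence}) the class of finite free $E$-modules is closed under $E$-smash product, it suffices to check that each factor $L_{K(h)}\PP_j(\Sigma^n E)$ is a finite free $E$-module.

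Next, I would use the fact that $E$ is $2$-periodic as an $E_\infty$ ring to collapse the possible suspensions. A choice of unit $u \in \pi_2 E$ gives an $E$-module equivalence $\Sigma^2 E \simeq E$, so up to equivalence of $E$-modules any $\Sigma^n E$ is either $E$ or $\Sigma E$. This reduces the problem to showing that $L_{K(h)}\PP_j(E)$ and $L_{K(h)}\PP_j(\Sigma E)$ are finite free.

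The case $M = E$ is immediate from the material already assembled: by Proposition \ref{extended_power_of_unit}, $\PP_j^E(E) \simeq E \wedge \Sigma_+^\infty B\Sigma_j$, so $\pi_* L_{K(h)}\PP_j^E(E) \cong E^\wedge_*(B\Sigma_j)$, and this is well known (by Strickland, \cite{strickland_symmetric}, building on Hopkins--Kuhn--Ravenel) to be a finitely generated free $E_*$-module concentrated in even degrees. Invoking Proposition \ref{finite_free_equivalence} again converts this back into the statement that $L_{K(h)}\PP_j^E(E)$ is a finite free $E$-module.

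The main obstacle is the case $M = \Sigma E$. Here $\PP_j^E(\Sigma E)$ is equivalent to $E$ smashed with the Thom spectrum $(B\Sigma_j)^{\rho_j}$ of the real permutation representation $\rho_j$ of $\Sigma_j$ on $\mathbb{R}^j$, and this bundle is not \emph{a priori} $E$-orientable. My plan for this step is to observe that $\rho_j \oplus \rho_j \simeq \rho_j \otimes_\R \C$ is a complex bundle, hence $E$-oriented by the complex orientation on $E$, so that
\[(B\Sigma_j)^{\rho_j \oplus \rho_j} \wedge E \simeq \Sigma^{2j}(\Sigma_+^\infty B\Sigma_j) \wedge E,\]
and then to deduce finite freeness of $L_{K(h)}\PP_j^E(\Sigma E)$ from that of $L_{K(h)}(\PP_j^E(\Sigma E) \wedge_E \PP_j^E(\Sigma E))$ by an argument showing that the latter is a free module over the former (the source map of $\PP_j^E$ makes $L_{K(h)}\PP_j^E(\Sigma E)$ into a finitely generated module over a complete Noetherian local ring, and faithful flatness after base change along a finite free extension forces the original to be free of the right rank). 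This orientability step is where care is required, and it is the main obstacle; with it in hand, the reductions above finish the proof.
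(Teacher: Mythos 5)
The paper does not actually give a proof of this statement: it is quoted directly from Rezk's \emph{congruence criterion} paper (Proposition 3.17 there), so there is no argument here to compare yours against. Evaluating your proposal on its own merits:

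Your reduction steps are fine. Splitting $M$ into a wedge via Proposition \ref{extended_power_of_wedge} and then using $2$-periodicity to reduce to $M = E$ and $M = \Sigma E$ is correct, and the $M = E$ case is handled properly via Proposition \ref{extended_power_of_unit} together with Strickland's computation.

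The $M = \Sigma E$ case contains a concrete error. By Proposition \ref{comp_P}, $\PP_j^E(\Sigma E)\wedge_E \PP_j^E(\Sigma E) \simeq \PP^E_{\Sigma_j\times\Sigma_j}(\Sigma E)$, and unwinding the Thom-spectrum description this is $E$ smashed with the Thom spectrum of the \emph{external} sum $\rho_j \boxplus \rho_j$ over the base $B\Sigma_j\times B\Sigma_j$, not the Thom spectrum of the internal sum $\rho_j\oplus\rho_j$ over $B\Sigma_j$. Thom spectra take external Whitney sums to smash products; the internal sum is the diagonal pullback and gives only a comparison map $(B\Sigma_j)^{2\rho_j} \to (B\Sigma_j)^{\rho_j}\wedge(B\Sigma_j)^{\rho_j}$, not an equivalence. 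So the complex-orientability of $2\rho_j$ tells you nothing directly about $L_{K(h)}(\PP_j^E(\Sigma E)\wedge_E \PP_j^E(\Sigma E))$. The thing complexification \emph{does} control is $\PP_j^E(\Sigma^2 E) \simeq E\wedge(B\Sigma_j)^{2\rho_j}$, which by $2$-periodicity is just $\PP_j^E(E)$ again and gives no new information about the odd suspension. The real content of the $\Sigma E$ case is showing that $K(h)_*((B\Sigma_j)^{\rho_j})$ is finite and concentrated in a single parity, and this needs a genuine argument: for instance at $p=2$, $j=2$ one has $(B\Sigma_2)^{\rho_2}\simeq\Sigma\,\mathbb{R}P^\infty$, with $K(h)$-homology concentrated in \emph{odd} degrees of rank $2^h-1$, so no Thom isomorphism is available (the sign bundle is not $E$-orientable), even though the answer is still finite free. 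The downstream algebra (that if $N\otimes_{E_0}N$ is finite free and $N$ is finitely generated over the complete Noetherian local ring $E_0$ then $N$ is free) can be made to work by a Nakayama argument, but your phrasing of it as a module over the ``former'' together with the source map of $\PP_j^E$ obscures rather than clarifies; in any case it is moot until the Thom-spectrum identification is repaired.
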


\subsection{Transfer maps}\label{transfer_section}

This section will give a brief refresher on the notion of transfer maps, and state a few properties that will be useful later. For more details, see Chapter 4 of \cite{adams_infinite_loops}.

\begin{defn}
For $f:Y \to X$ a finite sheeted covering map of spaces, there is a ``wrong-way'' map of spectra $f^!: \Sigma_+^\infty X \to \Sigma_+^\infty Y$ called the ``transfer''. It is functorial in covering maps,  takes pullback squares to commutative squares, and takes products to smash products.
\end{defn}

Our main examples will be classifying spaces of finite groups, and somewhat more generally, extended powers of spaces. If $H\leq G\leq \Sigma_k$, and $X$ is any space, then the natural map $\PP_H(X) \to \PP_G(X)$ is a finite covering map (with fiber $G/H$), and we will label the associated transfer $\transfer^G_H$. Taking $X= *$, we get a map between suspension spectra of classifying spaces. 

For $\Sigma_i \times \Sigma_{m-i} \leq \Sigma_m$, there is another description of the transfer maps which will be used later. 

\begin{prop}[\cite{h_infinity_book}, II.1.5]\label{transfer_is_diagonal}
The map
\[\transfer_{\Sigma_i \times \Sigma_{m-i}}^{\Sigma_m}:\PP_m(X) \to \PP_{\Sigma_i \times \Sigma_{m-i}}(X) \simeq \PP_i(X) \wedge \PP_{m-i}(X)\]
 is equal to the composite 
\begin{equation*}
	\begin{tikzcd}
		\PP_m(X) \arrow{r}{\PP_m(\Delta)} & \PP_m(X\vee X) \arrow{r}{\operatorname{proj}} & \PP_i(X) \wedge \PP_{m-i}(X),
	\end{tikzcd}
\end{equation*}
where $\operatorname{proj}$ is the projection onto the appropriate factor from the equivalence in Lemma \ref{extended_power_of_wedge}.
\end{prop}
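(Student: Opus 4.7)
The plan is to unfold both sides as $\Sigma_m$-equivariant maps between $m$-fold smash powers, and then compare them before passing to homotopy orbits. Concretely, the equivalence of Proposition \ref{extended_power_of_wedge} is produced by expanding $(X\vee X)^{\wedge m} \simeq \bigvee_{\sigma\in\{0,1\}^m} X^{\wedge m}$ and grouping summands according to the $\Sigma_m$-orbits on $\{0,1\}^m$. These orbits are indexed by the weight $|\sigma|$, with stabilizer $\Sigma_i \times \Sigma_{m-i}$ at a weight-$i$ representative, so the weight-$i$ orbit summand of $\PP_m(X \vee X)$ identifies with $(E\Sigma_m)_+ \wedge_{\Sigma_i \times \Sigma_{m-i}} X^{\wedge m} \simeq \PP_i(X) \wedge \PP_{m-i}(X)$.

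Next, I would compute $\Delta^{\wedge m}$ explicitly. Since for a spectrum (or $E$-module) $X$ the diagonal $\Delta : X \to X \vee X$ is the sum $\iota_1 + \iota_2$ of the two wedge inclusions, bilinearity of $\wedge$ over $\vee$ gives $\Delta^{\wedge m} = \sum_\sigma \iota_\sigma$, the sum of all $2^m$ canonical inclusions $X^{\wedge m} \hookrightarrow (X \vee X)^{\wedge m}$. Projecting onto the weight-$i$ orbit summand then yields the $\Sigma_m$-equivariant map
\[
X^{\wedge m} \longrightarrow \bigvee_{|\sigma|=i} X^{\wedge m} \;\cong\; (\Sigma_m/(\Sigma_i \times \Sigma_{m-i}))_+ \wedge X^{\wedge m},
\]
namely $\mathrm{id}_{X^{\wedge m}}$ smashed with the ``pinch onto all cosets'' map $\mathbb{S} \to (\Sigma_m/(\Sigma_i \times \Sigma_{m-i}))_+$. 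Recognising this pinch as the $\Sigma_m$-equivariant transfer of the free $\Sigma_m$-set $\Sigma_m/(\Sigma_i \times \Sigma_{m-i})$, and passing to homotopy $\Sigma_m$-orbits via the identification $(E\Sigma_m)_+ \wedge_{\Sigma_m} (\Sigma_m/H)_+ \wedge X^{\wedge m} \simeq \PP_H(X)$, produces precisely the stable transfer $\transfer^{\Sigma_m}_{\Sigma_i \times \Sigma_{m-i}}$.

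The main obstacle is the last identification: that the ``sum over coset representatives'' map is the $\Sigma_m$-equivariant transfer. This is essentially the content of the Wirthm\"uller isomorphism, or equivalently the construction of the transfer for a finite covering via the Pontryagin--Thom collapse; verifying it directly is standard but requires careful bookkeeping with free $\Sigma_m$-sets. In the intended application, one may of course invoke \cite{h_infinity_book}~II.1.5 directly, which packages exactly this comparison.
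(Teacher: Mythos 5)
Your argument reconstructs the proof behind \cite{h_infinity_book}, II.1.5 (which the paper simply cites): decompose $(X\vee X)^{\wedge m}$ into $\Sigma_m$-orbit wedge summands indexed by the weight of $\sigma\in\{0,1\}^m$, note that $\Delta^{\wedge m}$ is the sum of all $2^m$ coordinate inclusions, and identify the weight-$i$ component as the equivariant pinch-over-cosets map smashed with the identity on $X^{\wedge m}$. The one imprecision is the appeal to the Wirthm\"uller isomorphism at the end---the ingredient actually needed is the Lewis--May--Steinberger pretransfer model of the transfer (the same model this paper invokes in its proof of Proposition~\ref{transfer_ext_powers}), whose underlying nonequivariant map is exactly your pinch $\Sphere \to (\Sigma_m/(\Sigma_i\times\Sigma_{m-i}))_+$ onto all cosets---but with that substitution your outline is complete and matches the intended reference.
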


\begin{prop}\label{transfer_ext_powers}
Transfers commute with extended powers in the sense that the following diagram commutes for $H$ a subgroup of $G$.
\begin{equation*}{}
	\begin{tikzcd}
		\PP_m(\Sigma_+^\infty BG) \arrow{r}{\PP_m(\transfer^G_H)} & \PP_m(\Sigma_+^\infty BH)\\
		\Sigma_+^\infty BG\wreath\Sigma_m \arrow{u}{\simeq} \arrow{r}{\transfer^{G\wr\Sigma_m}_{H\wr\Sigma_m}} &\Sigma_+^\infty BH\wreath\Sigma_m \arrow{u}{\simeq}
	\end{tikzcd}
\end{equation*}
\end{prop}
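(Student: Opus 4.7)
The plan is to reduce everything to the space level, where the statement becomes functoriality of the transfer applied to an extended power of a covering map.

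First I would work on the space level. The inclusion $H\leq G$ gives a finite covering $BH\to BG$ with fiber $G/H$. Applying $\PP_m = E\Sigma_{m+}\times_{\Sigma_m}(-)^m$ yields a finite covering map of spaces $\PP_m(BH)\to\PP_m(BG)$ with fiber $(G/H)^m$. Under the standard identifications $\PP_m(BK)\simeq B(K\wreath\Sigma_m)$, I would verify that this covering corresponds to the one arising from the subgroup inclusion $H\wreath\Sigma_m \leq G\wreath\Sigma_m$: both coverings are classified by the action of $G\wreath\Sigma_m$ on $(G\wreath\Sigma_m)/(H\wreath\Sigma_m)$, which is naturally isomorphic to $(G/H)^m$ with the evident wreath action. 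Consequently $\sigmaplusinfinity$ of the space-level transfer of $\PP_m(BH)\to\PP_m(BG)$ is, modulo the vertical equivalences in the diagram, the bottom arrow $\transfer^{G\wreath\Sigma_m}_{H\wreath\Sigma_m}$.

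Next I would identify the top arrow $\PP_m(\transfer^G_H)$ with that same suspension spectrum of a space-level transfer. For this I would use three ingredients: (a) the equivalence $\sigmaplusinfinity\PP_G(X)\simeq \PP^\Sphere_G(\sigmaplusinfinity X)$ already cited from \cite{h_infinity_book}; (b) the fact that stable transfer carries finite products to smash products, so the $\Sigma_m$-equivariant transfer of $(BH)^m\to(BG)^m$ is $(\transfer^G_H)^{\wedge m}$; and (c) naturality of the transfer under the base change $E\Sigma_{m+}\wedge_{\Sigma_m}(-)$, which turns that $\Sigma_m$-equivariant smash power into the transfer of the extended-power covering $\PP_m(BH)\to\PP_m(BG)$. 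Combining (a)--(c) shows that the top composite in the square is also $\sigmaplusinfinity$ applied to the space-level transfer of $\PP_m(BH)\to\PP_m(BG)$, and hence equals the bottom composite.

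The main obstacle is step (c) together with the verification that the two covering maps in the first paragraph really are isomorphic as coverings (not merely that their total spaces are abstractly homotopy equivalent). After that identification, everything reduces to the functoriality of the transfer under finite covering maps of spaces, which is standard (see \cite{adams_infinite_loops} Chapter 4). Once those points are settled, the square commutes by inspection.
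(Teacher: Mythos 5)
Your outline runs along essentially the same lines as the paper's proof: both reduce the statement to compatibility properties of the transfer with products and with passing to (homotopy) orbits, after identifying the two vertical equivalences in the square as instances of the isomorphism $\beta$ of \cite{h_infinity_book}. The one place I want to push back is step (c). What you are asserting there is that, given the $\Sigma_m$-equivariant covering $(BH)^m\to(BG)^m$, applying $E\Sigma_{m+}\wedge_{\Sigma_m}(-)$ to its transfer yields the transfer of the induced covering $\PP_m(BH)\to\PP_m(BG)$. This is genuinely the crux of the proposition, and calling it ``naturality of the transfer under base change'' undersells it: the operation $E\Sigma_{m+}\wedge_{\Sigma_m}(-)$ is not a base change, and the standard nonequivariant properties of the transfer (functoriality, compatibility with pullback squares, products to smash products) give you a commutative square involving transfers of the quotient maps $E\Sigma_m\times(-)^m\to E\Sigma_m\times_{\Sigma_m}(-)^m$, but those quotient transfers are not invertible, so you cannot transport the identification across them. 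What you actually need is the compatibility of the equivariant transfer (which is what makes step (b) an identification of genuinely $\Sigma_m$-equivariant maps, not merely of maps that happen to admit an action on homotopy) with the change-of-universe/orbit functors -- and that is precisely what Lemmas IV.2.6 and IV.2.7 of \cite{lewis_may_steinberger}, which the paper cites, are for. So either cite those lemmas as the paper does, or build the pretransfer and run the argument equivariantly; as written, (c) is a gap rather than an inspection.
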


We couldn't quite find this statement explicitly in the literature, so we at least give an indication of how to assemble a proof from existing texts. 

\begin{proof}
One construction of the transfer $\Sigma_+^\infty BG \to \Sigma_+^\infty BH$ is given in \cite{lewis_may_steinberger} as homotopy fixed points of a (naively) $G$-equivariant map $\tau: \Sphere \to \Sigma_+^\infty (G/H)$ called the ``pretransfer.'' From this construction, the claim follows from assembling Lemmas IV.2.6 and IV.2.7 from \cite{lewis_may_steinberger} (with all universes trivial and $\alpha$ the inclusion $H\hookrightarrow G$) as well as the isomorphism $\beta$ from page 5 of \cite{h_infinity_book}.
\end{proof}

\subsection{The Total Power Operations}\label{power_ops}

 Let $X$ be a spectrum. The $E_\infty$ structure on Morava $E$-theory allow us to define a ``total power operation'' on $E^0(X)$ as follows. Recall that an $E_\infty$ structure on $E$ is the structure of an algebra on $E$ for the monad $\PP^\Sphere$. In particular, there are structure maps $\mu_m: \PP^\Sphere_mE \to E$. For any spectrum map $\alpha: X \to E$, we can apply the extended power $\PP^\Sphere_m$ and compose with $\mu_m$ to get a composite map
 \[ P_m(\alpha): \PP^\Sphere_m(X) \xrightarrow{\PP^\Sphere_m(\alpha)} \PP^\Sphere_m(E) \xrightarrow{\mu_m} E.
 \]
 This composite gives a cohomology class in $E^0(\PP_m^\Sphere(X)),$ and so we obtain a \emph{set} map 
 \[E^0(X) \xrightarrow{P_m} E^0(\PP_m^\Sphere X).
 \]
This map is multiplicative, but \emph{not additive}. However, we have the following formula for total power operations applied to a sum.
\begin{prop}[\cite{h_infinity_book}, II.2.1]\label{total_power_of_sum}
If $\alpha$ and $\beta$ are cohomology classes in $E^0(X)$, then 
\[P_m(\alpha + \beta) = \sum_{i+j=m} \transfer_{\Sigma_i \times \Sigma_j}^{\Sigma_m}(P_i(\alpha) \boxtimes P_j(\beta)),
\]
where $\boxtimes$ is the external product in $E$-cohomology.
\end{prop}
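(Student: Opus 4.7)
The plan is to unwind the definition of $P_m(\alpha+\beta)$ by expressing the addition via the cogroup comultiplication $\Delta: X \to X \vee X$ present on every spectrum. Concretely, the sum $\alpha+\beta: X \to E$ factors as $X \xrightarrow{\Delta} X \vee X \xrightarrow{(\alpha,\beta)} E$, where $(\alpha,\beta)$ denotes the map out of the wedge determined by $\alpha$ and $\beta$. By functoriality of $\PP^{\Sphere}_m$, the total power operation then takes the form
\[P_m(\alpha + \beta) = \mu_m \circ \PP^{\Sphere}_m((\alpha,\beta)) \circ \PP^{\Sphere}_m(\Delta).\]

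First, decompose $\PP^{\Sphere}_m(X \vee X)$ via Proposition \ref{extended_power_of_wedge} into the wedge $\bigvee_{i+j=m} \PP^{\Sphere}_i(X) \wedge \PP^{\Sphere}_j(X)$. Since this wedge is finite and the target $E$ is a spectrum, the composite above equals the sum of its restrictions to each summand. By Proposition \ref{transfer_is_diagonal}, the composite $\PP^{\Sphere}_m(\Delta)$ followed by projection onto the $(i,j)$-summand is exactly the transfer $\transfer^{\Sigma_m}_{\Sigma_i \times \Sigma_j}: \PP^{\Sphere}_m(X) \to \PP^{\Sphere}_i(X) \wedge \PP^{\Sphere}_j(X)$, using the identification from Proposition \ref{comp_P}.

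It then remains to identify the composite
\[\PP^{\Sphere}_i(X) \wedge \PP^{\Sphere}_j(X) \hookrightarrow \PP^{\Sphere}_m(X \vee X) \xrightarrow{\PP^{\Sphere}_m((\alpha,\beta))} \PP^{\Sphere}_m(E) \xrightarrow{\mu_m} E\]
on each summand with the external product $P_i(\alpha) \boxtimes P_j(\beta)$. The summand inclusion factors through $\PP^{\Sphere}_{\Sigma_i \times \Sigma_j}(X\vee X)$, and under $\PP^{\Sphere}_m((\alpha,\beta))$ lands inside $\PP^{\Sphere}_i(E) \wedge \PP^{\Sphere}_j(E) \hookrightarrow \PP^{\Sphere}_m(E)$ via $\PP^{\Sphere}_i(\alpha) \wedge \PP^{\Sphere}_j(\beta)$. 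The operadic compatibility of the $E_\infty$ structure on $E$ gives that $\mu_m$ restricted along the inclusion $\PP^{\Sphere}_{\Sigma_i \times \Sigma_j}(E) \hookrightarrow \PP^{\Sphere}_m(E)$ equals $\mu_i \wedge \mu_j$ followed by the ring multiplication $E \wedge E \to E$. The summand contribution is therefore exactly the external product $P_i(\alpha) \boxtimes P_j(\beta)$ precomposed with the transfer identified above, and summing over $(i,j)$ yields the claimed formula.

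The main technical obstacle is the operadic compatibility in the last step: one needs the $E_\infty$ structure maps $\mu_m$ to behave coherently under the inclusions $\Sigma_i \times \Sigma_j \hookrightarrow \Sigma_{i+j}$. This is a standard consequence of any $E_\infty$ (or $H_\infty$) structure, but it does go beyond the purely formal manipulation of the wedge splitting and Proposition \ref{transfer_is_diagonal}.
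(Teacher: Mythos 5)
Your proof is correct, and since the paper does not reprove this result but simply cites \cite{h_infinity_book}, II.2.1, there is no internal proof to compare against; your argument is in fact essentially the one given in that reference. The sequence of moves—factoring $\alpha+\beta$ through the pinch map $X \to X \vee X$, applying the wedge splitting of $\PP_m$ (Proposition \ref{extended_power_of_wedge}), identifying the composite with projections as transfers via Proposition \ref{transfer_is_diagonal} and \ref{comp_P}, and finally invoking the $H_\infty$ compatibility of the structure maps $\mu_m$ with the subgroup inclusions $\Sigma_i \times \Sigma_j \leq \Sigma_m$—is the standard argument, and you are right that the last step is the only place genuinely non-formal input from the $E_\infty$ structure enters. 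One small remark: the paper's notational convention reserves $\Delta$ for the diagonal $X \to X \times X$ (hence $\Sigma^\infty_+ X \to \Sigma^\infty_+ X \wedge \Sigma^\infty_+ X$), while both you and Proposition \ref{transfer_is_diagonal} use it for the pinch/codiagonal $X \to X \vee X$ at the spectrum level; since for spectra the diagonal into the product corresponds to the pinch into the wedge under the canonical equivalence $X \vee X \simeq X \times X$, this is harmless, but worth flagging to avoid confusion.
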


The total power operations also interact nicely with transfers from subgroups in another way.
\begin{prop}\label{transfers_and_total_power_ops}
If $G$ is a finite group, and $H$ is a subgroup of $G$, then the diagram
\begin{equation*}
	\begin{tikzcd}
		E^0(BH) \arrow{r}{P_m} \arrow{d}{\transfer_H^G} & E^0(BH\wreath\Sigma_m) \arrow{d}{\transfer_{H\wr\Sigma_m}^{G\wr\Sigma_m}}\\
		E^0(BG) \arrow{r}{P_m} & E^0(BG\wreath\Sigma_m)
	\end{tikzcd}
\end{equation*}
commutes.
\end{prop}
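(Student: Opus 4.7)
The plan is to unpack the definition of the total power operation and reduce the statement to Proposition \ref{transfer_ext_powers}, which already establishes compatibility between extended powers and transfers at the spectrum level.

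Concretely, I would represent a class $\alpha \in E^0(BH)$ by a spectrum map $\alpha \colon \Sigma_+^\infty BH \to E$. The transfer in cohomology $\transfer_H^G(\alpha)$ is then the precomposition $\alpha \circ \transfer_H^G$, where the second $\transfer_H^G \colon \Sigma_+^\infty BG \to \Sigma_+^\infty BH$ is the spectrum-level transfer. The total power operation applied to any class $\gamma \colon Y \to E$ is, by the definition in Section \ref{power_ops}, the composite $\mu_m \circ \PP_m^\Sphere(\gamma)$, with $\mu_m \colon \PP_m^\Sphere(E) \to E$ the structure map of the $E_\infty$-algebra $E$.

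Following the upper-right path of the diagram, I obtain
\[
\transfer_{H\wr\Sigma_m}^{G\wr\Sigma_m}(P_m(\alpha)) \;=\; \mu_m \circ \PP_m^\Sphere(\alpha) \circ \transfer_{H\wr\Sigma_m}^{G\wr\Sigma_m},
\]
after using the natural identification $\PP_m^\Sphere(\Sigma_+^\infty BH) \simeq \Sigma_+^\infty B(H\wr\Sigma_m)$ from the space-level version of Proposition \ref{comp_P}. Following the lower-left path, functoriality of $\PP_m^\Sphere$ yields
\[
P_m(\transfer_H^G(\alpha)) \;=\; \mu_m \circ \PP_m^\Sphere(\alpha) \circ \PP_m^\Sphere(\transfer_H^G).
\]
The two expressions agree precisely when $\PP_m^\Sphere(\transfer_H^G)$ is identified with $\transfer_{H\wr\Sigma_m}^{G\wr\Sigma_m}$ under the equivalences $\PP_m^\Sphere(\Sigma_+^\infty BG) \simeq \Sigma_+^\infty B(G\wr\Sigma_m)$ and likewise for $H$. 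This is exactly the content of Proposition \ref{transfer_ext_powers}, so the proof amounts to assembling these pieces.

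Since the hard work is already packaged into Proposition \ref{transfer_ext_powers} (which in turn relies on the Lewis--May--Steinberger pretransfer and the isomorphism $\beta$ from \cite{h_infinity_book}), the only real step is the diagram chase above. The main obstacle, if any, is purely bookkeeping: one must be careful about the variance of transfers (spectrum-level vs.\ cohomology-level) and the conventions for the wreath-product embeddings used to define $\PP_m^\Sphere$ on suspension spectra of classifying spaces. No additional input specific to Morava $E$-theory is required beyond the fact that $E$ carries an $E_\infty$-structure giving the maps $\mu_m$.
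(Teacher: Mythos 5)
Your proof is correct and takes the same approach as the paper: represent the class by a spectrum map $\alpha\colon\Sigma^\infty_+ BH\to E$, unpack $P_m$ as $\mu_m\circ\PP_m(-)$, and observe that the remaining compatibility of $\PP_m(\transfer_H^G)$ with $\transfer_{H\wr\Sigma_m}^{G\wr\Sigma_m}$ is precisely Proposition \ref{transfer_ext_powers}. The paper records this as a single commutative diagram rather than a chain of equalities, but the content is identical.
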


\begin{proof}
Let $[\alpha]$ be a class in $E^0(BH)$ represented by a map of spectra $\alpha: \Sigma_+^\infty(BH) \to E$. After unpacking the definition of $P_m$, the propostion amounts to the commutativity of the following diagram, which is guaranteed by Proposition \ref{transfer_ext_powers}.

\begin{equation*}
	\begin{tikzcd}
			& E\\
			& \PP_m(E) \arrow{u}[swap]{\mu_m} \\
		\PP_m(\Sigma_+^\infty BG) \arrow{ur}{\PP_m(\transfer^G_H(\alpha))} \arrow{r}{\PP_m(\transfer^G_H)} & \PP_m(\Sigma_+^\infty BH) \arrow{u}[swap]{\PP_m	(\alpha)}\\
		\Sigma_+^\infty BG\wreath\Sigma_m \arrow{u}{\simeq} \arrow{r}{\transfer^{G\wr\Sigma_m}_{H\wr\Sigma_m}} &\Sigma_+^\infty BH\wreath\Sigma_m \arrow{u}{\simeq}
	\end{tikzcd}		
\end{equation*}
\end{proof}

\subsection{Bimodule Duality}
Several objects that will appear later have two natural $E_0$-module structures. Although the ring $E_0$ is commutative, we will want to regard these as $E_0$-$E_0$ bimodules, and we will want to be careful about taking $E_0$-linear duals. We briefly review some notions about duality in this setting. We found Chapter 4 of Ponto's book on bicategories \cite{Ponto_trace_bicategories} helpful when thinking about these things, and will refer there for the proofs in this section.

\begin{defn} 
An $R$-$S$ bimodule ${}_R M_S$ is \emph{right dualizable} if it is finitely generated and projective as a right $S$-module. Dually, it is \emph{left dualizable} if it is a finitely generated projective left $R$-module.
\end{defn}

\begin{defn}If ${}_R M_S$ is an $R$-$S$ bimodule, its \emph{right dual} ${}_S M^\star_R$ is the function module $\operatorname{Hom}_{\operatorname{Mod}_S}({}_R M_S, {}_S S_S)$, taken in the category of right $S$-modules. This inherits a right $R$-module structure from the left $R$-module structure on $M$ and a left $S$-module structure from the natural left $S$-module structure on $S$. We can similarly define the left dual of an $R$-$S$-bimodule. It is also an $S$-$R$-bimodule.
\end{defn}

Many of the usual statements about duals and dualizability hold in this context. We collect a few for future use.

\begin{prop}
If ${}_R M_S$ is a right dualizable $R$-$S$ bimodule, then ${}_S M^\star_R$ is left dualizable, and its left dual is naturally isomorphic to ${}_R M_S$.
\end{prop}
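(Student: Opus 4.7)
The plan is to reduce to the standard one-sided statement about finitely generated projective modules and then to check bimodule compatibility. Since ${}_R M_S$ is right dualizable, there exist a right $S$-module $N$ and an isomorphism $M \oplus N \cong S^n$ of right $S$-modules for some $n$. Applying $\Hom_{\Mod_S}(-, {}_S S_S)$ yields a splitting ${}_S M^\star \oplus \Hom_{\Mod_S}(N, S) \cong \Hom_{\Mod_S}(S^n, S) \cong S^n$ of left $S$-modules, exhibiting $M^\star$ as a direct summand of a finite free left $S$-module. This is exactly the left dualizability of ${}_S M^\star_R$.

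For the double-dual isomorphism, I would construct the evaluation map $\eta_M \colon M \to M^{\star\star}$ sending $m$ to the functional $\phi \mapsto \phi(m)$. Unpacking shows $\eta_M$ is a map of $R$-$S$ bimodules: right $S$-linearity of $\eta_M$ comes from the right $S$-linearity of each $\phi$, while left $R$-linearity follows from the way the left $R$-action on $M$ produces the right $R$-action on $M^\star$ and then the left $R$-action on $M^{\star\star}$. To see $\eta_M$ is an isomorphism, use that $\eta$ is natural in $M$ and compatible with finite direct sums. Applied to the decomposition $M \oplus N \cong S^n$, the claim reduces to checking that $\eta_{S^n}$ is an isomorphism, which is immediate from $\Hom_{\Mod_S}(S, S) \cong S$.

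The main obstacle, insofar as there is one, is purely bookkeeping: juggling the four module actions present at each stage and verifying that the evaluation map intertwines both the $R$-action and the $S$-action. Once the bimodule conventions are settled, the argument reduces to the familiar proof of reflexivity for finitely generated projective modules over a ring, and naturality of $\eta$ in $M$ automatically promotes the resulting isomorphism ${}_R M_S \cong {}_R (M^\star)^\star_S$ to a natural one.
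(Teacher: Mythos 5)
Your proof is correct, but it takes a different route from the paper. The paper's argument is a citation: it invokes Ponto's bicategorical framework, in which dualizability is \emph{defined} symmetrically via unit and counit maps (Ponto 4.3.1), and then cites Ponto's Proposition 4.2.1 to identify that abstract notion with the concrete one-sided condition of being finitely generated projective. The double-dual statement then falls out of the symmetry built into the abstract definition. You instead give a self-contained elementary argument: split $M$ off a finite free right $S$-module, apply $\Hom_{\Mod_S}(-,S)$ to get $M^\star$ as a summand of a finite free left $S$-module, and verify reflexivity by checking the evaluation map is a bimodule morphism and reducing to the free case by naturality. Your approach is more transparent and avoids the bicategory machinery; the paper's is shorter and situates the result inside a general duality theory that it reuses for Propositions \ref{dual_of_tensor} and the uniqueness of duals. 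One small point worth making explicit in your write-up: the splitting $M\oplus N\cong S^n$ is only an isomorphism of right $S$-modules (it need not respect the left $R$-action), and that suffices --- you verify the bimodule compatibility of $\eta_M$ by hand, and then only need the decomposition to see that $\eta_M$ is bijective, which is a statement at the level of right $S$-modules.
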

\begin{proof}
This is essentially part of Ponto's definition of dualizability  (\cite{Ponto_trace_bicategories}, 4.3.1).  Proposition 4.2.1 there gives the equivalence between the two definitions.
\end{proof}

\begin{prop}
If a bimodule is right dualizable, any two right duals are naturally isomorphic.
\end{prop}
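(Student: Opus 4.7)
The plan is to use the standard bicategorical uniqueness-of-adjoints argument. Although the paper's own working definition of the right dual is the explicit $\Hom_{\Mod_S}(M, S)$, the proposition as stated is about \emph{any} bimodule that deserves the name ``right dual,'' so I would unpack Ponto's characterization (4.3.1) first: a right dual of ${}_R M_S$ is a bimodule ${}_S N_R$ equipped with an evaluation $\epsilon \colon N \otimes_R M \to S$ of $S$-$S$ bimodules and a coevaluation $\eta \colon R \to M \otimes_S N$ of $R$-$R$ bimodules, subject to the two triangle identities
\begin{equation*}
(M \otimes \epsilon) \circ (\eta \otimes M) = \operatorname{id}_M, \qquad (\epsilon \otimes N) \circ (N \otimes \eta) = \operatorname{id}_N.
\end{equation*}

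Given two right duals $(N_1, \eta_1, \epsilon_1)$ and $(N_2, \eta_2, \epsilon_2)$ of the same $M$, I would build comparison maps out of the opposite duality data. Concretely, define
\begin{equation*}
f \colon N_1 \xrightarrow{N_1 \otimes \eta_2} N_1 \otimes_R M \otimes_S N_2 \xrightarrow{\epsilon_1 \otimes N_2} N_2
\end{equation*}
and a symmetric candidate inverse $g \colon N_2 \to N_1$, obtained by swapping the roles of $N_1$ and $N_2$ in the composite. Both $f$ and $g$ are manifestly maps of $S$-$R$ bimodules since all the pieces are.

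To finish, I would verify that $g \circ f = \operatorname{id}_{N_1}$ and $f \circ g = \operatorname{id}_{N_2}$ by a standard diagram chase: after expanding the composition one can slide $\epsilon_1$ past $\eta_2$ using the interchange law in the bicategory of bimodules, then apply one triangle identity for $(N_1, \eta_1, \epsilon_1)$ and one for $(N_2, \eta_2, \epsilon_2)$. Naturality of the resulting isomorphism is automatic: only the universal data $\eta_i, \epsilon_i$ enter, so any morphism of right duals (in the evident sense, intertwining the $\eta$'s and $\epsilon$'s) commutes with these composites on the nose.

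There is essentially no hard step here; the content is entirely formal manipulation of the triangle identities, and if anything the only thing to be careful about is bookkeeping the left/right actions of $R$ and $S$ on the various tensor factors so that the composites are genuinely maps of bimodules. For this reason I would be inclined to simply cite Ponto, Proposition 4.2.1 (the equivalence of the two formulations of dualizability she gives), which already packages this uniqueness together with the existence of the triangle identities.
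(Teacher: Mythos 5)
Your argument is correct: it is the standard bicategorical uniqueness-of-adjoints proof via the triangle identities, which is exactly the argument Ponto gives. The paper itself simply defers to Ponto (``between Propositions 4.3.2 and 4.3.3''), so you are spelling out the proof the paper delegates. One small bookkeeping note: the Ponto reference you offer at the end, Proposition~4.2.1, is her equivalence of the two \emph{definitions} of dualizability (cited by the paper for the preceding proposition); the uniqueness statement itself lives in the discussion around Propositions~4.3.2--4.3.3.
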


\begin{proof}
Ponto proves this between Propositions 4.3.2 and 4.3.3.
\end{proof}

Note that the right dualizability of a bimodule does not actually depend at all on the left module structure. 

\begin{prop}\label{dual_of_tensor}
If ${}_R M_S$ and ${}_S N_T$ are right dualizable bimodules with right duals ${}_S (M^\star)_R$ and ${}_T (N^\star)_S$, then ${}_R M_S\otimes {}_S N_T$ is also right dualizable, with right dual ${}_T (N^\star)_S \otimes {}_S (M^\star)_R$
\end{prop}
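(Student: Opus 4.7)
The plan is to realize right dualizability through its standard characterization via evaluation and coevaluation maps satisfying triangle identities, and then assemble the required data for $M \otimes_S N$ from the corresponding data for $M$ and $N$. Concretely, the right dualizability of ${}_R M_S$ with right dual ${}_S (M^\star)_R$ is equivalent (by Ponto's Chapter 4, in the spirit of the equivalence used in the previous two propositions) to the existence of an evaluation $\epsilon_M \colon M^\star \otimes_R M \to S$ of $S$-$S$-bimodules and a coevaluation $\eta_M \colon R \to M \otimes_S M^\star$ of $R$-$R$-bimodules satisfying the two triangle identities. The analogous data $(\epsilon_N, \eta_N)$ exists for ${}_S N_T$.

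Given these, I would define the evaluation
\[ \epsilon \colon (N^\star \otimes_S M^\star) \otimes_R (M \otimes_S N) \longrightarrow T \]
as the composite that first applies $\epsilon_M$ to the middle two factors to replace $M^\star \otimes_R M$ by $S$, and then applies $\epsilon_N$ to the resulting $N^\star \otimes_S N$. Dually, the coevaluation
\[ \eta \colon R \longrightarrow (M \otimes_S N) \otimes_T (N^\star \otimes_S M^\star) \]
is defined by first applying $\eta_M$ to land in $M \otimes_S M^\star$, then inserting $\eta_N$ into the middle of this tensor product to land in $M \otimes_S N \otimes_T N^\star \otimes_S M^\star$.

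The verification reduces to checking the two triangle identities for the pair $(\epsilon, \eta)$. Expanding each composite and regrouping by means of the unit and associativity isomorphisms for $\otimes$, each identity reduces to the triangle identities for $(\epsilon_M, \eta_M)$ nested inside those for $(\epsilon_N, \eta_N)$. The main obstacle is not conceptual but purely a matter of bookkeeping: keeping track of the various bimodule structures, the placement of associators, and the order of the tensor factors. For this reason the most streamlined presentation is probably to invoke the general bicategorical fact that the composition of two right dualizable $1$-cells is right dualizable, with right dual equal to the composition of the right duals in the opposite order---a statement in the basic theory of dualizability in a bicategory, treated in Ponto's Chapter 4, from which the proposition follows by specializing to the bicategory of rings, bimodules, and bimodule maps.
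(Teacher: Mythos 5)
Your proposal is correct and ultimately lands on the same argument as the paper, which simply cites Ponto's Theorem 4.3.4 (the bicategorical fact that composites of right dualizable $1$-cells are right dualizable, with right dual the composite of the right duals in reverse order). The explicit evaluation/coevaluation construction you sketch is exactly what underlies that theorem; the paper just takes the shortcut of citing it directly.
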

\begin{proof}
This is a special case of Ponto's Theorem 4.3.4.
\end{proof}

For the rest of this paper, all bimodules will be $E_0$-$E_0$-bimodules. We will also make a change of notation. If $f: E_0 \to \operatorname{Hom}_{\mathbb{Z}}(M, M)$ is a map of abelian groups which endows $M$ with either a left or right $E_0$-module structure, we write a superscript $f$ on the appropriate side of $M$. For example, if $M$ is a left $E_0$-module via a map $f$, and a right $E_0$-module via $g$, we indicate the resulting bimodule by ${}^f M^g.$

\section{Cohomology of Wreath Products}\label{cohom_wreath_products}
Since this paper is about the Morava $E$-cohomology of (the classifying spaces of) several groups, we use this section to collect some facts and definitions in that area.

The first is that the Morava $E$-theory of groups is often free, and if it is, we can identify the dual.
\begin{example}\label{cohom_of_sym_gps}
Let $G$ be a finite group. Some results from \cite{strickland_symmetric} (Corollary 3.8, Proposition 3.5 and the argument of Proposition 3.6) conspire to show that if $K(h)^*(BG)$ is concentrated in even degrees,  then $E^*(BG)$ is a finite free $E_*$-module which is concentrated in even degrees. Further, its $E_*$-linear dual is $E_*^\wedge(BG)$ and both of these statements remain true replacing $E_*$ with $E_0$. Theorem E of \cite{HKR} shows that this $K(h)$-hypothesis holds for any group built from symmetric groups by successively taking products and wreath products, which covers every group appearing in this paper. In particular, $E^0(B\Sigma_m)$ is finite free, and its $E_0$-linear dual is $E_0^\wedge(B\Sigma_m)$.
\end{example}

We can also define a bimodule structure on a certain quotient of $E^0(B\Sigma_m)$.

\begin{example}\label{quotient_of_sym_gp_bimod}
Strickland \cite{strickland_symmetric} considers an ideal $I_m$ in $E^0(B\Sigma_m)$ generated by the images of the transfer maps from the cohomology groups $E^0(B(\Sigma_i\times\Sigma_{m-i}))$. Proposition \ref{total_power_of_sum} shows that the total power operation $P_m: E_0 \to E^0(B\Sigma_m)$ becomes a ring map (but \emph{not} an $E_0$-algebra map) after composition with the quotient map $E^0(B\Sigma_m) \to E^0(B\Sigma_m)/I_m.$ Together with the usual multiplication on $E^0(B\Sigma_m)/I_m$, this ring map defines a second $E_0$-module structure on $E^0(B\Sigma_m)/I_m$, which we will call $t$ and regard as a left module structure. Using the letter $s$ for the usual module structure on this quotient, we get the $E_0$-$E_0$-bimodules ${}^t(E^0(B\Sigma_m)/I_m)^s$ appearing in the main theorem.
\end{example}

There is an analog of this example for the iterated wreath products of symmetric groups.

\begin{example}\label{quotient_of_wreath_product_bimod}
Let $\mathbf{m} = (m_1, \ldots, m_\ell)$ be a sequence of positive integers, and let $\Sigma_{\mathbf{m}} = \Sigma_{m_1}\wr\ldots\wr\Sigma_{m_\ell} = (\Sigma_{m_1}\wr\ldots\wr\Sigma_{m_{\ell-1}})\wr\Sigma_{m_\ell}$ be an iterated wreath product of symmetric groups. Define proper subgroups $\Sigma_{\mathbf{m},t,j}$ of $\Sigma_{\mathbf{m}}$ for $1\leq t \leq \ell$ and $0<j<m_t$ by 

\[\Sigma_{\mathbf{m},t,j} = \Sigma_{m_1}\!\wr\ldots\wr\!(\Sigma_j\times \Sigma_{m_t-j})\!\wr\ldots\wr\!\Sigma_{m_\ell}.\] 

In words, $\Sigma_{\mathbf{m},t,j}$ is an $\ell$-fold wreath product like $\Sigma_\mathbf{m}$, but with $\Sigma_j\times \Sigma_{m_t-j}$ inserted in the $t$-th slot. When $\mathbf{m}$ has length one, this recovers $\Sigma_i \times \Sigma_j$ as a subgroup of $\Sigma_m$ (where $i+j=m$). If $\mathbf{m} = (m_1,m_2)$ has length two, we have that $\Sigma_\mathbf{m} = \Sigma_{m_1}\wr\Sigma_{m_2},$ and the $\Sigma_{\mathbf{m},t,j}$ run over the subgroups $(\Sigma_i \times \Sigma_{m_1-i})\wr\Sigma_m$ and $\Sigma_{m_1}\wr(\Sigma_k\times \Sigma_{m_2-k})$, for $0< i < m_1$ and $0<k<m_2$. 

We now get transfer maps in cohomology $\transfer^{\Sigma_\mathbf{m}}_{\Sigma_{\mathbf{m},t,j}}: E^0(B\Sigma_{\mathbf{m},t,j}) \to E^0(B\Sigma_\mathbf{m})$ and can define a transfer ideal in $E^0(B\Sigma_{\mathbf{m}})$, like in Example \ref{quotient_of_sym_gp_bimod}

\begin{defn}\label{wreath_product_transfer_ideal}
The \emph{transfer ideal} $I_\mathbf{m}$ in $E^0(B\Sigma_\mathbf{m})$ is the ideal generated by the images of all the transfer maps $\transfer^{\Sigma_\mathbf{m}}_{\Sigma_{\mathbf{m},t,j}}$, for $1\leq t \leq \ell$ and $0<j<m_t$.
\end{defn}

Intuitively, $I_\mathbf{m}$ is the ideal generated by images of transfers from partition subgroups taken in only one wreath product variable at a time. When $\mathbf{m}$ has length one, we recover Strickland's ideals $I_m$.

Given $\mathbf{m}$, we also use $\mathbf{m}' = (m_1, \ldots, m_{\ell-1})$, which is $\mathbf{m}$ with the last entry deleted. Note that $\Sigma_{\mathbf{m}'} = \Sigma_{m_1}\wr\ldots\wr\Sigma_{m_{\ell-1}}$.

Now we can consider a composite $P_\mathbf{m}$ of total power operations as follows:

\[ E^0(\operatorname{pt}) \xrightarrow{P_{m_1}} E^0(B\Sigma_{m_1}) \xrightarrow{P_{m_2}} E^0(B\Sigma_{m_1}\wr\Sigma_{m_2})\xrightarrow{P_{m_3}} \cdots  \xrightarrow{P_{m_{\ell}}} E^0(B\Sigma_{\mathbf{m}}).
\]

By Proposition \ref{transfers_and_total_power_ops}, the map $P_{m_\ell}$ descends to a map $\bar{P}_{m_\ell}$ on quotients, making the following diagram commute.
\begin{equation*}
\begin{tikzcd}
	E^0(B\Sigma_{\mathbf{m}'}) \arrow{r}{P_{m_\ell}} \arrow{d} & E^0(B\Sigma_\mathbf{m}) \arrow{d} \\
	\frac{E^0(B\Sigma_{\mathbf{m}'})}{I_{\mathbf{m}'}} \arrow{r}{\overline{P}_{m_\ell}} & \frac{E^0(B\Sigma_{\mathbf{m}})}{I_{\mathbf{m}}}
\end{tikzcd}
\end{equation*}
The map $\bar{P}_{m_\ell}$ is a ring map (since in the target we have also killed transfers in the last wreath factor), so the composite $\bar{P}_\mathbf{m} = \bar{P}_{m_\ell} \circ \ldots \circ\bar{P}_{m_1}$ gives a ring map 
\[ E^0(\operatorname{pt}) \to \frac{E^0(B\Sigma_\mathbf{m})}{I_\mathbf{m}}.
\]
As in Example \ref{quotient_of_sym_gp_bimod}, this gives a left $E_0$-module structure on $E^0(B\Sigma_\mathbf{m})/I_\mathbf{m}$, which we will also denote with a $t$. This is defined in such a way as to make the map 
\[\biggl.^t\!\left(\frac{E^0(B\Sigma_{\mathbf{m}'})}{I_{\mathbf{m}'}}\right)\!\biggr.^s \xrightarrow{\bar{P}_{m_\ell}}  \biggl.^t\!\left(\frac{E^0(B\Sigma_{\mathbf{m}})}{I_{\mathbf{m}}}\right)\!\biggr.^s
\]
into a map of left $E_0$-modules.

In the course of the proof of the main theorem (Theorem \ref{main_theorem}), we will show that ${}^t(E^0(B\Sigma_{\mathbf{m}})/I_{\mathbf{m}})^s$ is right dualizable. We will define its right dual in Section \ref{J_bold_m}, but the proof that these are dual will also need to wait until we prove the main theorem.

\end{example}

\section{Rezk's Algebraic Approximation Functors}\label{the_functors_TT}

To study Morava $E$-theory power operations, Rezk (\cite{congruence_criterion}) introduced a sequence of functors $\TT_m$. These are an algebraic approximation to the $K(h)$-localized extended power functors. We will not worry much about the specifics of the construction of these functors and merely use some of their properties. We give a brief review of the necessary facts here.

\begin{theorem}[Rezk]\label{Tm_exists}
There exist functors $\TT_m$ from the category of $E_0$-modules to itself, together with natural transformations
\[\alpha_m: \TT_m(\pi_0) \to \pi_0L_{K(h)}\PP_m\]

between functors from the category of $E$-modules to the category of $E_0$-modules. This natural transformation is an isomorphism on finite free $E$-modules. The functors $\TT_m$ also preserve filtered colimits.
\end{theorem}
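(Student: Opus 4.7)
The plan is to build $\TT_m$ first on the subcategory of finite free $E_0$-modules, where Proposition \ref{finite_free_equivalence} lets me transport from topology, and then extend to all of $\Mod_{E_0}$ by a left Kan extension that automatically preserves filtered colimits.

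Let $\mathcal{F}_0$ denote the full subcategory of finite free $E_0$-modules, and let $\mathcal{F}_E$ denote the full subcategory of the homotopy category of $E$-modules spanned by finite free modules concentrated in even degrees. Proposition \ref{finite_free_equivalence} gives an equivalence $\pi_0 \colon \mathcal{F}_E \xrightarrow{\sim} \mathcal{F}_0$; fix a quasi-inverse $L$. The key topological input, stated just before the theorem, is that $L_{K(h)}\PP_m$ sends $\mathcal{F}_E$ to itself, so I would define
\[\TT_m(N) := \pi_0 L_{K(h)}\PP_m(L(N)) \quad \text{for } N \in \mathcal{F}_0,\]
and take $\alpha_m^M$ for $M \in \mathcal{F}_E$ to be the composite isomorphism $\TT_m(\pi_0 M) = \pi_0 L_{K(h)}\PP_m(L(\pi_0 M)) \cong \pi_0 L_{K(h)}\PP_m(M)$ induced by the equivalence $L(\pi_0 M) \simeq M$. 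Functoriality and the isomorphism claim on finite free $E$-modules follow directly from the equivalence.

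Next I would extend $\TT_m$ to all of $\Mod_{E_0}$ via left Kan extension along $\mathcal{F}_0 \hookrightarrow \Mod_{E_0}$, computed pointwise as
\[\TT_m(N) := \operatorname{colim}_{(F \to N) \in \mathcal{F}_0/N} \TT_m(F).\]
When $N$ is already in $\mathcal{F}_0$, the identity map $N \to N$ is terminal in the slice, so this recovers the earlier value. Since each $F \in \mathcal{F}_0$ is finitely presented in $\Mod_{E_0}$, the slice $\mathcal{F}_0/(-)$ commutes with filtered colimits, and hence so does $\TT_m$. To extend $\alpha_m$ to a general $E$-module $M$, I would use that any map $F \to \pi_0 M$ from a finite free $F$ lifts to an essentially unique map $L(F) \to M$ of $E$-modules (since by Proposition \ref{finite_free_equivalence}, $\pi_0$ is fully faithful on maps out of finite free $E$-modules), producing a comparison $\TT_m(F) = \pi_0 L_{K(h)}\PP_m(L(F)) \to \pi_0 L_{K(h)}\PP_m(M)$; the colimit of these comparisons over $\mathcal{F}_0/\pi_0 M$ is $\alpha_m^M$.

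The main obstacle I anticipate is not any single step but the coherence bookkeeping: verifying that these pointwise colimits assemble into a genuine natural transformation between functors on the homotopy category of $E$-modules, rather than just a family of maps. This reduces to checking compatibility of the lifts $L(F) \to M$ with maps of (possibly non-free) $E$-modules, which follows from the essential uniqueness of such lifts granted by Proposition \ref{finite_free_equivalence} together with functoriality of slice categories in the target. Rezk handles the analogous coherence in a slightly different but equivalent formulation in \cite{congruence_criterion}, and I would follow his presentation for the fine points.
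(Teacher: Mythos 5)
This theorem is stated in the paper as a citation to Rezk (\cite{congruence_criterion}, see Proposition 4.8 and Theorem 4.9 and the surrounding discussion there); the paper gives no proof of its own, so there is nothing internal to compare against. That said, your reconstruction --- define $\TT_m$ on finite free $E_0$-modules by transporting $\pi_0 L_{K(h)}\PP_m$ across the equivalence of Proposition~\ref{finite_free_equivalence}, then left Kan extend along the inclusion of finitely presented objects, with the comparison $\alpha_m$ assembled from lifts $L(F)\to M$ --- is essentially what Rezk does, modulo the fact (flagged in the paper's own remark after the theorem) that Rezk works with graded $E_*$-modules and the paper extracts an ungraded $E_0$-version afterward. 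Your direct formulation in $E_0$-modules is the version the paper actually uses, so your route is fine.

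Two small points worth tightening. First, your claim that ``$L_{K(h)}\PP_m$ sends $\mathcal{F}_E$ to itself'' is not quite what the cited proposition says: it guarantees $L_{K(h)}\PP_m$ preserves \emph{finite free} $E$-modules, but not a priori that it preserves the subclass concentrated in even degrees. You need the additional input that $E^\wedge_*(B\Sigma_m)$ (and more generally $E^\wedge_*$ of the relevant wreath products) is even, which comes from the Hopkins--Kuhn--Ravenel/Strickland computations quoted in Example~\ref{cohom_of_sym_gps}; without it, $\pi_0$ of the localized extended power could lose information. Second, your coherence worry is genuine but resolves cleanly: for $N$ a finite wedge of unshifted copies of $E$, one has $[N,M]_E \cong \Hom_{E_0}(\pi_0 N, \pi_0 M)$ for \emph{arbitrary} $E$-modules $M$ (not just finite free targets), which is slightly stronger than the statement of Proposition~\ref{finite_free_equivalence} and is exactly what makes the lifts $L(F)\to M$ unique and hence functorial in the slice. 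Spelling that out would close the gap you anticipate.
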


\begin{remark}
Rezk does not actually define the $\TT_m$ as endofunctors of the category of $E_0$-modules, but rather as endofunctors of the category of graded $E_*$-modules. Since $E_*$ is $2$-periodic and concentrated in even degrees, we can consider an $E_0$-module $M$ as a graded $E_*$-module by inserting $M$ in all even degrees, and  0 in all odd degrees. We can then apply Rezk's $\TT_m$ and take the degree 0 piece to recover what we call $\TT_m$ here.
\end{remark}

There are a few properties of the functors $\TT_m$ that we need aside from the ones stated in the theorem. We collect them in the following three propositions.

\begin{prop}\label{Tm_of_sum}
If $M$ and $N$ are $E_0$-modules, then we have an isomorphism
\[\TT_m(M \oplus N) \cong \bigoplus_{i=0}^m \TT_i(M) \otimes \TT_{m-i}(N).\]

More generally, if $M_0, \ldots , M_r$ are $r+1$ $E_0$-modules, there are isomorphisms  
\[\TT_m\left(\bigoplus_{i=0}^r M_i\right) \cong \bigoplus_{\lambda\in \Lambda_{m,r}} \bigotimes_{i=0}^r \TT_{\lambda_i}(M_i).\]

If $M_0$ and $N_{i,0}$ ($0\leq i \leq n$) are  $E_0$-modules, and $f_i:M_0\to N_{i,0}$ are $E_0$-module maps, then the isomorphisms above given an identification
\[\TT_m\left(\bigoplus_{i=0}^n f_i\right) \cong \bigoplus_{\lambda\in \Lambda_{m,n}}\bigotimes_{i=0}^n \TT_{\lambda_i}(f_i).\]
\end{prop}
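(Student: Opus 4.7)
The plan is to first verify the isomorphism for finite free $E_0$-modules by transporting the statement across $\alpha_m$ to the topological side, and then extend to arbitrary modules.

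For the finite free case, lift $M$ and $N$ to finite free $E$-modules $\tilde M$ and $\tilde N$ using Proposition \ref{finite_free_equivalence}; the wedge $\tilde M \vee \tilde N$ is again finite free. Since $\alpha_m$ is an isomorphism on finite free $E$-modules by Theorem \ref{Tm_exists},
\[\TT_m(M \oplus N) \cong \pi_0 L_{K(h)}\PP_m(\tilde M \vee \tilde N),\]
and Proposition \ref{extended_power_of_wedge} splits the right-hand side as $\pi_0 L_{K(h)}\bigvee_{i+j=m}\PP_i(\tilde M)\wedge_E\PP_j(\tilde N)$. By the final proposition of Section \ref{extended_powers_section}, each $L_{K(h)}\PP_i(\tilde M)$ is finite free and hence already $K(h)$-local, so the outer localization can be discarded; the finite wedge and $\pi_0$ then commute, and the Künneth formula for finite free $E$-modules identifies $\pi_0$ of the smash product with the $E_0$-tensor product. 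Pulling each $\alpha_i$ back through on the individual factors assembles the claimed decomposition.

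To extend to arbitrary $E_0$-modules, note that both sides of the desired isomorphism preserve filtered colimits in each variable --- the functors $\TT_m$ by Theorem \ref{Tm_exists}, and $\otimes_{E_0}$ automatically. The natural isomorphism on finite free modules then propagates, provided one can present any $E_0$-module as a colimit of finite free ones that is respected by both functors. This is where I anticipate the main technical point: Lazard's theorem prevents expressing arbitrary modules as filtered colimits of finite free modules outside the flat case, so one must instead appeal to Rezk's explicit polynomial-type construction of $\TT_m$ in \cite{congruence_criterion}, which by design distributes over direct sums at the level of the definition, yielding the extension to all $E_0$-modules.

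The more general $(r+1)$-fold decomposition then follows by induction on $r$ from the binary case, with the index set $\Lambda_{m,r}$ arising from iteratively splitting the total $m$ across successive summands and reindexing via the associativity of $\oplus$ and $\otimes_{E_0}$. Finally, the compatibility with the direct sum of maps $f_i$ follows from the naturality of $\alpha_m$, of Proposition \ref{extended_power_of_wedge}, and of the Künneth and localization identifications used in the finite free step.
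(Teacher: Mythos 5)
The paper's entire proof of this proposition is a one-line citation: ``These all follow from Proposition~4.14 of \cite{congruence_criterion}.'' Your attempt to actually reconstruct the argument is therefore doing more work than the paper itself, and the finite-free half of it is essentially sound. Lifting $M$ and $N$ to even finite free $E$-modules via Proposition~\ref{finite_free_equivalence}, applying Theorem~\ref{Tm_exists} to replace $\TT_m$ with $\pi_0 L_{K(h)}\PP_m$, splitting the extended power of the wedge via Proposition~\ref{extended_power_of_wedge}, discarding the localization because the localized extended powers of finite free modules are again finite free (hence already $K(h)$-local), and then applying a K\"unneth isomorphism is exactly the kind of argument Rezk runs. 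You do quietly elide two facts --- that $L_{K(h)}$ passes through $\wedge_E$ in the way needed, and that the relevant extended powers land in even degrees so that $\pi_0$ of the smash really is the $E_0$-tensor product --- but both are standard and proved in Rezk's paper.

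The extension to arbitrary $E_0$-modules is where I would push back. You correctly diagnose that filtered colimits of finite free modules only reach flat modules, so the colimit argument by itself is insufficient. But the fix you propose --- that Rezk's construction ``by design distributes over direct sums at the level of the definition'' --- mischaracterizes the situation. Rezk extends the topological formula from free modules to all modules by a colimit procedure (preserving filtered colimits and reflexive coequalizers), and then proves the decomposition over direct sums as a theorem, namely the very Proposition~4.14 this paper cites; it is not a built-in feature of the definition. So the step you identify as the crux is precisely what remains unproven in your sketch, and in substance you are appealing to the same external result the paper does. That said, spelling out the finite-free case and the naturality argument for the third assertion is a legitimate supplement to the citation, and your instinct about where the technical difficulty lies is correct.
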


\begin{proof}
These all follow from Proposition 4.14 of \cite{congruence_criterion}.
\end{proof}

\begin{prop}\label{Tm_of_point}
\[\TT_m(E_0) = E^{\wedge}_0(B\Sigma_m).\] 
\end{prop}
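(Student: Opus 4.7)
The plan is to evaluate the natural transformation $\alpha_m$ of Theorem \ref{Tm_exists} at the simplest possible finite free $E$-module, namely $M = E$ itself, and then identify each side explicitly.

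First, I would note that $E$ is tautologically a finite free $E$-module (a single copy of itself in degree zero), so Theorem \ref{Tm_exists} guarantees that the component of $\alpha_m$ at $E$ is an isomorphism
\[\TT_m(E_0) = \TT_m(\pi_0 E) \xrightarrow{\cong} \pi_0 L_{K(h)}\PP_m(E).\]
Next, I would apply Proposition \ref{extended_power_of_unit}, which identifies $\PP_m(E)$ with $E \wedge \Sigma_+^\infty B\Sigma_m$. Substituting this into the right-hand side gives
\[\pi_0 L_{K(h)}\PP_m(E) \cong \pi_0 L_{K(h)}(E \wedge \Sigma_+^\infty B\Sigma_m),\]
which is exactly $E^\wedge_0(B\Sigma_m)$ by Definition \ref{completed_homology}.

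Since this proposition is essentially a direct combination of the universal property of $\TT_m$ at the unit module, the computation of the extended power of the unit, and the definition of completed $E$-homology, I do not anticipate any real obstacle. The only thing worth being careful about is the passage between Rezk's graded convention for $\TT_m$ and our ungraded convention, but this is handled by the remark immediately following Theorem \ref{Tm_exists}, which explains how to recover the $E_0$-module version by concentrating the input in even degrees and taking the degree zero piece of the output.
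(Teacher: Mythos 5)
Your proof is correct and follows essentially the same route as the paper's, just written in the opposite direction: the paper starts from $E^{\wedge}_0(B\Sigma_m)$ and unwinds definitions until it reaches $\TT_m(E_0)$, while you start from $\TT_m(E_0)$ and apply $\alpha_m$ at the finite free module $E$. The one small cosmetic difference is that you cite Proposition \ref{extended_power_of_unit} by name where the paper just re-derives that identification inline.
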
 

\begin{proof}
This is more or less by the definition of all the symbols involved. Indeed, since $E_0$ is tautologically free as an $E_0$-module,
\begin{align*}
E^{\wedge}_0(B\Sigma_m) &= \pi_0L_{K(h)}(E\wedge \Sigma^\infty_+ B\Sigma_m) \\
		&= \pi_0L_{K(h)}\left((E\Sigma_m)_+ \wedge E^{\wedge_E m})_{\Sigma_m}\right) \\
		&= \pi_0L_{K(h)}\left((E^{\wedge_E m})_{h\Sigma_m}\right)\\
		&= \pi_0L_{K(h)}\PP^E_m(E) \\
		&= \TT_m(E_0).
\end{align*}
\end{proof}

\begin{prop}\label{comp_of_Tm}
If $G$ is a finite group so that $E^\wedge_0(BG)$ is a finite free $E_0$ module, then 
\[\TT_m\left(E^\wedge_0(BG)\right) = E_0^\wedge(BG\wr\Sigma_m).\]
In particular, 
\[\TT_m\left(\TT_n(E_0)\right) = E^{\wedge}_0(B\Sigma_n \wreath \Sigma_m)\]
and more generally, for any $k$-tuple $(m_1, \ldots m_k)$ of positive integers,
\[\TT_{m_k}\circ \ldots \circ \TT_{m_1}(E_0) = E_0^\wedge(B\Sigma_{m_1}\wr\ldots\wr\Sigma_{m_k}). \]
\end{prop}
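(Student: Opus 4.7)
The plan is to realize $E_0^\wedge(BG)$ as the $\pi_0$ of a suitable finite free $E$-module, compute $L_{K(h)}\PP_m$ of that module topologically, and then invoke the natural isomorphism $\alpha_m$ from Theorem \ref{Tm_exists} to convert the answer back into a statement about $\TT_m$. Concretely, I would set $M = L_{K(h)}(E \wedge \Sigma_+^\infty BG)$; the hypothesis that $E_0^\wedge(BG)$ is finite free (together with the even concentration guaranteed for all groups appearing in this paper via Example \ref{cohom_of_sym_gps}) ensures $M$ is a finite free $E$-module with $\pi_0 M = E_0^\wedge(BG)$. Theorem \ref{Tm_exists} then yields
\[\TT_m(E_0^\wedge(BG)) \cong \pi_0 L_{K(h)}\PP_m(M).\]

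The next step is to identify the right-hand side explicitly. By Proposition \ref{power_of_localization}, the inner localization can be absorbed, so it suffices to compute $\PP_m^E(E \wedge \Sigma_+^\infty BG)$. Since $(E\wedge Y)^{\wedge_E m} \simeq E \wedge Y^{\wedge m}$ as $\Sigma_m$-equivariant $E$-modules (because $E$ is the unit of $\wedge_E$), passing to homotopy orbits gives $\PP_m^E(E\wedge Y) \simeq E \wedge \PP_m^\Sphere(Y)$ for any spectrum $Y$. Specializing to $Y = \Sigma_+^\infty BG$ and combining with the space-level identification $\PP_m^\Sphere(\Sigma_+^\infty X) \simeq \Sigma_+^\infty \PP_m(X)$ (from the proposition immediately preceding Proposition \ref{extended_power_of_unit}) together with the standard fact that the Borel construction $\PP_m(BG) = (BG)^m \times_{\Sigma_m} E\Sigma_m$ is a model for $B(G\wreath\Sigma_m)$, this yields
\[\PP_m^E(E \wedge \Sigma_+^\infty BG) \simeq E \wedge \Sigma_+^\infty B(G\wreath\Sigma_m).\]
Taking $\pi_0$ of the $K(h)$-localization recovers $E_0^\wedge(B(G\wreath\Sigma_m))$, establishing the first formula.

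The iterated assertions then follow by a straightforward induction on the length of $(m_1,\ldots,m_k)$, with base case $\TT_{m_1}(E_0) = E_0^\wedge(B\Sigma_{m_1})$ supplied by Proposition \ref{Tm_of_point}. At each inductive step one applies the first part of the proposition to $G = \Sigma_{m_1}\wreath\cdots\wreath\Sigma_{m_{k-1}}$, and the required finite-freeness of $E_0^\wedge(BG)$ is guaranteed by Example \ref{cohom_of_sym_gps} (via the HKR even-concentration criterion from \cite{HKR}). The only real subtlety I anticipate is the need to know that $M$ is a genuine finite free $E$-module rather than merely a spectrum with finite free $\pi_0$, since $\alpha_m$ is asserted to be an isomorphism only on the former; once that point is secured, each remaining identification in the chain is essentially formal.
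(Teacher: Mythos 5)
Your proposal is correct and follows essentially the same route as the paper: identify $\TT_m(E_0^\wedge(BG))$ with $\pi_0 L_{K(h)}\PP_m$ of a finite free $E$-module model via Theorem \ref{Tm_exists}, absorb the inner localization via Proposition \ref{power_of_localization}, identify the extended power as $E \wedge \Sigma_+^\infty B(G\wr\Sigma_m)$, and induct for the iterated statement. The only cosmetic difference is that the paper reaches the middle identification by applying Propositions \ref{extended_power_of_unit} and \ref{comp_P} directly (writing $E\wedge\Sigma_+^\infty BG$ as $\PP^E_G(E)$ and composing extended powers), whereas you derive $\PP_m^E(E\wedge Y)\simeq E\wedge\PP_m^{\Sphere}(Y)$ from the unit property of $E$; these are the same underlying computation, and your explicit flagging of the finite-freeness/evenness point needed for $M$ to be a genuine finite free $E$-module is a reasonable precision that the paper leaves implicit.
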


\begin{proof}
We can make the following identifications.
\begin{align*}
\TT_m(E^\wedge_0(BG)) &= \pi_0L_{K(h)}\PP_m(L_{K(h)}(E\wedge \Sigma_+^\infty BG))\\
					 	&=\pi_0L_{K(h)}\PP_m(E\wedge \Sigma_+^\infty BG)\\
					 	&=\pi_0L_{K(h)}(E\wedge \Sigma_+^\infty BG\wr\Sigma_m)\\
					 	&=E_0^\wedge(BG\wr\Sigma_m).
\end{align*}
The first equality holds by the finite freeness assumption and Theorem \ref{Tm_exists}, the second is Proposition \ref{power_of_localization} and the third follows from Propositions \ref{extended_power_of_unit} and \ref{comp_P}. The last equality is Definition \ref{completed_homology}.
The first statement of the proposition applies to wreath products of symmetric groups by Example \ref{cohom_of_sym_gps}, and induction yields the second statement.
\end{proof}

\section{Left linear approximation and the chain rule}\label{left_linearization_section}
Consider two abelian categories $\cA$ and $\cB$, and let $F$ be a reduced, but not necessarily additive, functor from $\cA$ to $\cB$. In this setting, Rezk (\cite{power_ops_koszul}, 5.1) defines  a functor $\cL_F : \cA \to \cB$, which he calls the linearization of $F$. It serves as a universal additive functor $\cA \to \cB$ with a natural transformation from $F$. For our purposes it is more convenient to consider a version of linear approximation with the opposite universal property. We call Rezk's construction the linear approximation from the right, and now define the opposite notion. 
\begin{defn}\label{left_lin}
Let $\cA$ and $\cB$ be abelian categories and let $F: \cA \to \cB$ be which is not necessarily additive, but with the property $F(0) = 0$.
The \emph{left linearization} $\leftlin F$ of $F$ is the equalizer of 
\[\begin{tikzcd}[column sep=large]
	F(X)\arrow[yshift=0.7ex]{r}{F(i_1 + i_2)} \arrow[yshift=-0.7ex]{r}[swap]{F(i_1) + F(i_2)} & F(X\oplus X).
 \end{tikzcd}
\]
\end{defn}

The universal property of an equalizer provides a natural transformation $\eta: \leftlin F \to F$, and we have the following proposition.

\begin{prop}\label{univ_prop_left_lin}
$\leftlin F$ is additive, and any natural map $G \to F$ from an additive functor $G$ factors uniquely through $\eta: \leftlin F \to F$.
\end{prop}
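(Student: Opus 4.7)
The plan is to exploit the standard factorization $f + g = \nabla_Y \circ (f \oplus g) \circ \Delta_X$ of addition in an additive category (where $\Delta_X = i_1 + i_2 \colon X \to X \oplus X$ and $\nabla_Y = \pi_1 + \pi_2 \colon Y \oplus Y \to Y$), together with the universal property of the equalizer defining $\leftlin F$ and the fact that in the abelian category $\cB$ composition of morphisms is bilinear.

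First I would verify that $\leftlin F$ is genuinely a functor, meaning that for each $f \colon X \to Y$ the morphism $F(f)$ restricts to a morphism $\leftlin F(X) \to \leftlin F(Y)$. Using $\Delta_Y \circ f = (f \oplus f) \circ \Delta_X$ and the equalizer property of $\eta_X$, one checks that $F(\Delta_Y) \circ F(f) \circ \eta_X$ equals $(F(i_1) + F(i_2)) \circ F(f) \circ \eta_X$, so $F(f) \circ \eta_X$ factors uniquely through $\eta_Y$. This gives the action of $\leftlin F$ on morphisms, and functoriality is automatic from uniqueness.

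Next I would prove additivity. For $f, g \colon X \to Y$, I compute $F(f+g) \circ \eta_X$ by expanding $F(f+g) = F(\nabla_Y) \circ F(f \oplus g) \circ F(\Delta_X)$ and replacing $F(\Delta_X) \circ \eta_X$ by $(F(i_1) + F(i_2)) \circ \eta_X$ via the equalizer relation. Using bilinearity of composition, the identities $(f \oplus g) \circ i_1 = i_1 \circ f$ and $(f \oplus g) \circ i_2 = i_2 \circ g$, and $\nabla_Y \circ i_k = 1_Y$, this collapses to $(F(f) + F(g)) \circ \eta_X$. Invoking naturality of $\eta$, both $F(f+g) \circ \eta_X = \eta_Y \circ \leftlin F(f+g)$ and $(F(f) + F(g)) \circ \eta_X = \eta_Y \circ (\leftlin F(f) + \leftlin F(g))$. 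Since equalizers in an abelian category are kernels of differences, $\eta_Y$ is a monomorphism, giving $\leftlin F(f+g) = \leftlin F(f) + \leftlin F(g)$.

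For the universal property, given an additive $G$ and a natural transformation $\phi \colon G \to F$, I would show $\phi_X$ equalizes $F(\Delta_X)$ and $F(i_1) + F(i_2)$: naturality of $\phi$ gives $F(\Delta_X) \circ \phi_X = \phi_{X \oplus X} \circ G(\Delta_X)$, additivity of $G$ splits $G(\Delta_X) = G(i_1) + G(i_2)$, and a second application of naturality plus bilinearity produces $(F(i_1) + F(i_2)) \circ \phi_X$. The universal property of the equalizer then yields a unique $\widetilde\phi_X \colon G(X) \to \leftlin F(X)$ with $\eta_X \circ \widetilde\phi_X = \phi_X$, and naturality of $\widetilde\phi$ in $X$ follows by the usual uniqueness-of-factorization argument using the monic $\eta$.

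The conceptual crux is the additivity computation: because $F$ itself is not additive one cannot simply distribute $F$ over sums, and the interchange is only legitimate after restricting to $\leftlin F$, where the defining equalizer identity converts $F(\Delta)$ into $F(i_1) + F(i_2)$ and opens the door to the bilinearity manipulation. The remaining steps, including the universal property, are essentially formal consequences.
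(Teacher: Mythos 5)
Your proof is correct and is an explicit unwinding of exactly the argument the paper gestures at: the paper's proof simply cites the formal dual of Rezk's Proposition 5.2 in \cite{power_ops_koszul}, remarking that composition is bilinear and that $i_1+i_2$ is the ``universal case'' of addition via $f+g = (f\amalg g)\circ(i_1+i_2)$, which is precisely the factorization $\nabla_Y\circ(f\oplus g)\circ\Delta_X$ you use. Your write-out of functoriality, additivity via that factorization and the monicity of the equalizer, and the factorization through $\eta$ in the universal property are all sound and are exactly what a full dualization of Rezk's argument produces.
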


\begin{proof}
This is formally dual to the proof of proposition 5.2 in \cite{power_ops_koszul}. Note that composition of functions in an abelian category is additive in both variables. Essentially, $i_1 + i_2$ is the universal case, as $f + g = (f \coprod g)\circ (i_1 + i_2): X \to X\oplus X \to Y$.
\end{proof}

Rezk then gives a certain sort of chain rule for right linearization in certain circumstances. The goal for the rest of this section is to  prove the analogous theorem for left linearization. First make the following definitions.
\begin{defn}\label{cross_effects_monad}
Let $\top F$ be a functor defined as 
\[\top F(X) := \operatorname{coker}\left( F(X)\oplus F(X) \xrightarrow{F(i_1)\coprod F(i_2)} F(X\oplus X)\right).\]
Write $\beta^F: F(X\oplus X) \to \top F(X)$ for the quotient map, and $\ell^F$ for $\beta^F \circ F(i_1 + i_2).$  
\end{defn}

There is now an evident exact sequence (natural in $X$)
\[0 \to \leftlin F(X) \xrightarrow{\eta} F(X) \xrightarrow{\ell^F} \top F(X).\]

Since $\leftlin F \circ \leftlin G$ is additive, there is a natural map $c:\leftlin F \circ \leftlin G \to \leftlin (F\circ G)$ given by Proposition \ref{univ_prop_left_lin}.
The chain rule in this context says that $c$ is an isomorphism when the above short exact sequence for $G(X)$ splits.

\begin{theorem}[Chain Rule]\label{Dual_chain_rule}
Let $\mathcal{A},\mathcal{B}$ and $\mathcal{C}$ be abelian categories, let $G:\mathcal{A} \to \mathcal{B}$ and $F:\mathcal{B}\to \mathcal{C}$ be reduced but not necessarily additive functors between them, and let $X$ be an object of $\mathcal{A}$. Suppose there are splittings $\sigma: G(X) \to \leftlin G(X)$ and $\tau: \top G(X) \to G(X)$, so that $\sigma\circ\eta = \operatorname{id}_A$ and $\tau\circ\ell^G|_{\operatorname{ker}\sigma} = \operatorname{id}_{\operatorname{ker}\sigma}$. Then the comparison map $c_X: (\leftlin F\circ \leftlin G)(X) \to \leftlin(F\circ G)(X)$ is an isomorphism.
\end{theorem}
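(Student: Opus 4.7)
The plan is to use the hypothesized splittings $\sigma$ and $\tau$ to decompose both $G(X)$ and $G(X\oplus X)$ explicitly, then analyze $\leftlin(F\circ G)(X) \subset F(G(X))$ via the cross-effect decomposition for $F$. Put $A := \leftlin G(X)$ and $K := \ker\sigma$, so that $\sigma$ induces $G(X) = A\oplus K$. Since $G$ is reduced, the canonical cross-effect splitting yields $G(X\oplus X) = G(X)\oplus G(X)\oplus C$, where $C := \top G(X) = \operatorname{cr}_2 G(X,X)$; under this splitting the maps $G(i_1), G(i_2)$ are the inclusions of the first two summands, and $G(\Delta) = (\operatorname{id}, \operatorname{id}, \ell^G)$. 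Because $A = \ker \ell^G$, we have $\ell^G = \ell^G|_K\circ \pi_K$, and the assumption on $\tau$ says precisely that $\ell^G|_K : K \to C$ is split monic, hence so is $F(\ell^G|_K)$.

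Next I would observe that $c_X$, composed with the equalizer inclusion $\eta_{F\circ G}:\leftlin(F\circ G)(X)\hookrightarrow F(G(X))$, equals $F(\iota_A)\circ\eta_F : \leftlin F(A)\hookrightarrow F(A)\hookrightarrow F(A\oplus K)$. Injectivity of $c_X$ is then immediate, since $F(\sigma)$ retracts $F(\iota_A)$ and $\eta_F$ is monic. For surjectivity, take $x\in \leftlin(F\circ G)(X)$, so that
\[
F(G(\Delta))(x) - F(G(i_1))(x) - F(G(i_2))(x) = 0 \quad \text{in } F(G(X\oplus X)).
\]
The target decomposes into seven cross-effect summands. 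Projecting the equation onto the $F(C)$-summand yields $F(\ell^G)(x) = 0$; factoring $\ell^G = \ell^G|_K \circ \pi_K$ and using that $F(\ell^G|_K)$ is split monic, this forces the $F(K)$-component of $x$ (in the decomposition $F(A\oplus K) = F(A)\oplus F(K)\oplus \operatorname{cr}_2 F(A, K)$) to vanish. Projecting onto $\operatorname{cr}_2 F(G(X), G(X))$ instead yields $\ell^F(x) = 0$, so $x\in \leftlin F(A\oplus K)$; by additivity of $\leftlin F$ together with naturality of $\eta_F$, this subobject of $F(A\oplus K)$ sits entirely in $F(A)\oplus F(K)$, so the $\operatorname{cr}_2 F(A, K)$-component of $x$ also vanishes. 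Combined, $x$ lies in $F(\iota_A)(\leftlin F(A))$, the image of $c_X$.

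The main obstacle is verifying the two key projections. This amounts to writing $F(G(\Delta)) = F(\operatorname{id}\oplus \operatorname{id}\oplus \ell^G)\circ F(\Delta_3)$ and expanding $F(\Delta_3)(x)$ into its cross-effect components: the $\{3\}$-component contributes $F(\ell^G)(x)$ to $F(C)$, while the $\{1,2\}$-component contributes $\ell^F(x)$ to $\operatorname{cr}_2 F(G(X), G(X))$. The corresponding contributions from $F(G(i_1))(x)$ and $F(G(i_2))(x)$ both vanish in these two summands because each $F(G(i_j))$ factors through the inclusion of a single $F(G(X))$-summand of $F(G(X\oplus X))$. Once these projections are justified, the remaining steps are formal.
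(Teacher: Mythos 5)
Your argument is correct, and it is a genuinely different route from the paper's, which declines to give details: the paper simply remarks that ``dualizing either of Rezk's proofs'' (one citing Johnson--McCarthy, one elementary-but-tedious) yields the result. You instead spell out a self-contained, element-level argument. The structure is sound: identifying $A := \leftlin G(X) = \ker \ell^G$ and using that $\ell^G$ is a single morphism (hence additive, even though $G$ is not) to get the factorization $\ell^G = \ell^G|_K \circ \pi_K$; recognizing that $\eta_{F\circ G}\circ c_X = F(\iota_A)\circ \eta_F$ by the universal property, so injectivity is immediate from the retraction $F(\sigma)$; and for surjectivity, projecting the defining relation for $\leftlin(F\circ G)$ onto the $F(C)$ and $\operatorname{cr}_2 F(G(X),G(X))$ summands of the seven-term cross-effect decomposition of $F(G(X\oplus X))$, which (using the split monomorphism $F(\ell^G|_K)$ and the additivity of $\leftlin F$ together with naturality of $\eta_F$) kills the $F(K)$ and $\operatorname{cr}_2 F(A,K)$ components in turn. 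The two projection computations you flag as the ``main obstacle'' do go through exactly as you describe: the $F(C)$-projection is $F(p_C)$, and $p_C\circ G(\Delta) = \ell^G$ while $p_C\circ G(i_j) = 0$; the $\operatorname{cr}_2 F(G(X),G(X))$-projection factors through $F(p_{12})$, and $p_{12}\circ G(\Delta) = i_1+i_2$, $p_{12}\circ G(i_j) = i_j$, producing exactly $\ell^F(x)$. What your approach buys is a proof a reader can check without unwinding the Johnson--McCarthy machinery or manually dualizing a proof in another paper; the cost is the explicit bookkeeping with cross-effects, but that is precisely where the hypotheses on $\sigma$ and $\tau$ enter, so it is clarifying rather than redundant. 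One small thing worth making explicit in a final write-up: to pass from ``the $F(K)$ and $\operatorname{cr}_2 F(A,K)$ components of $x$ vanish'' to ``$x$ lies in $F(\iota_A)(\leftlin F(A))$'' you also use that the $F(A)$-component of an element of $\leftlin F(A\oplus K)\cong\leftlin F(A)\oplus\leftlin F(K)$ lies in $\eta_F(\leftlin F(A))$, which again follows from naturality of $\eta_F$ applied to $\iota_A$ and $p_A$; you gesture at this but it deserves a sentence.
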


\begin{proof}
  Rezk gives two proofs of his version: A brief proof relying on work of Johnson and McCarthy (\cite{johnson_mccarthy}) and an elementary but tedious proof that amounts to unravelling the first. Dualizing either proof gives our version of the theorem. 
\end{proof}

The left linearizations of the functors $\TT_m$ behave particularly nicely.

\begin{prop}\label{dual_lin_colimits}
The functors $\leftlin \TT_m$ preserve filtered colimits and arbitrary (small) coproducts.
\end{prop}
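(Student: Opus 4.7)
The plan is to prove the two preservation statements by quite different arguments. For filtered colimits, I would appeal directly to the equalizer definition from Definition \ref{left_lin}. Since we are working inside the abelian category of $E_0$-modules, filtered colimits are exact, so they commute with all finite limits, in particular with equalizers. Both maps in the defining equalizer of $\leftlin \TT_m$ are built from $\TT_m$ applied to $X$ or $X \oplus X$, and $\TT_m$ preserves filtered colimits by Theorem \ref{Tm_exists}. Commuting the filtered colimit past both $\TT_m(X)$ and $\TT_m(X \oplus X)$ and then past the equalizer yields preservation by $\leftlin \TT_m$.

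For arbitrary small coproducts, the strategy is to bootstrap from the finite case using the filtered colimit result. Proposition \ref{univ_prop_left_lin} says $\leftlin \TT_m$ is additive, so it preserves finite direct sums (which in an abelian category coincide with finite coproducts). Given an arbitrary small coproduct $\bigoplus_{i \in I} M_i$, I would write it as the filtered colimit over the directed poset of finite subsets $F \subseteq I$ of the finite coproducts $\bigoplus_{i \in F} M_i$. Applying preservation of filtered colimits outside and additivity inside then gives preservation of the entire coproduct.

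I do not anticipate a serious obstacle: the proof rests on exactness of filtered colimits in module categories together with the two inputs already established, namely that $\TT_m$ preserves filtered colimits and that $\leftlin \TT_m$ is additive. The only real point requiring mild attention is that the two parallel arrows in the equalizer use $X$ in two separate slots of $X \oplus X$; but the exactness of filtered colimits together with their commutation with finite direct sums makes this automatic, so no additional structure of $\TT_m$ needs to be invoked.
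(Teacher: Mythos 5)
Your proof is correct and follows essentially the same route as the paper: for filtered colimits you use that the defining equalizer is a finite limit and that $\TT_m$ preserves filtered colimits, and for arbitrary coproducts you combine additivity of $\leftlin \TT_m$ with writing the coproduct as a filtered colimit of its finite sub-coproducts. No substantive differences.
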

\begin{proof}
Theorem \ref{Tm_exists} includes the fact that the $\TT_m$ preserve filtered colimits. Since $\leftlin \TT_m$ is defined as a finite limit, this gives that $\leftlin \TT_m$ commutes with filtered colimits. Of course, $\TT_m$ does not preserve finite coproducts, but the left linearization of a reduced functor does by construction. An arbitrary small coproduct can be written as a filtered colimit of finite coproducts, so we get the result.
\end{proof}

\begin{prop}\label{right_mod_struct_on_linearization}
The $E_0$-module $\leftlin \TT_m(E_0)$ inherits an additional right $E_0$-module structure from the usual right $E_0$-module structure on $E_0$.
\end{prop}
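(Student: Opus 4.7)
The plan is to produce the new right action on $\leftlin \TT_m(E_0)$ by functorially transporting the multiplication action of $E_0$ on itself. For each $a \in E_0$, write $r_a : E_0 \to E_0$ for right multiplication by $a$, so that $r_a(x) = xa$. Since $E_0$ is commutative, $r_a$ is an $E_0$-module map, and so we may apply the functor $\leftlin \TT_m$ to obtain an endomorphism $\leftlin \TT_m(r_a) \in \operatorname{End}(\leftlin \TT_m(E_0))$. I would then define the new action by the formula $x \cdot a := \leftlin \TT_m(r_a)(x)$.

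To verify that this is indeed a right $E_0$-module structure I would check three things. Unitality is immediate from functoriality: since $r_1 = \operatorname{id}_{E_0}$, we get $\leftlin \TT_m(r_1) = \operatorname{id}$, and hence $x \cdot 1 = x$. Associativity follows from functoriality together with commutativity of $E_0$: the latter gives $r_b \circ r_a = r_{ab}$, and applying $\leftlin \TT_m$ yields
\[ (x \cdot a) \cdot b = \leftlin \TT_m(r_b)\bigl(\leftlin \TT_m(r_a)(x)\bigr) = \leftlin \TT_m(r_{ab})(x) = x \cdot (ab). \]
For distributivity in the scalar, note that $r_{a+b} = r_a + r_b$ as $E_0$-module maps; the additivity of $\leftlin \TT_m$ furnished by Proposition \ref{univ_prop_left_lin} then gives $\leftlin \TT_m(r_{a+b}) = \leftlin \TT_m(r_a) + \leftlin \TT_m(r_b)$, so that $x \cdot (a+b) = x \cdot a + x \cdot b$. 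Distributivity in the module variable is automatic, as each $\leftlin \TT_m(r_a)$ is a group homomorphism by being a morphism between $E_0$-modules.

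The only non-formal input in this chain of reasoning is the additivity of $\leftlin \TT_m$ on morphisms, which is precisely the content of Proposition \ref{univ_prop_left_lin}. Crucially, this additivity is exactly what fails for $\TT_m$ itself, so the passage to the left linearization is essential for the construction to produce a genuine module structure. Consequently there is no substantive obstacle; the proposition amounts to recording that $a \mapsto \leftlin \TT_m(r_a)$ is a ring homomorphism $E_0 \to \operatorname{End}(\leftlin \TT_m(E_0))^{\operatorname{op}}$, which equips $\leftlin \TT_m(E_0)$ with a second right $E_0$-module structure in addition to the one it carries as an object of the target category of the functor.
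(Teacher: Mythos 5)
Your proof is correct and uses essentially the same approach as the paper: identify $E_0$ with the ring of right-multiplication endomorphisms $\Mod_{E_0}(E_0,E_0)$, apply the additive functor $\leftlin\TT_m$, and observe that additivity (which $\TT_m$ itself lacks) is precisely what makes the result a module structure. The paper states this more compactly, while you spell out the unitality, associativity, and distributivity checks; the content is identical.
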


\begin{proof}
We can identify $E_0$ with $\Mod_{E_0}(E_0, E_0)$ by sending an element to multiplication on the right by that element. The functor $\leftlin\TT_m$ is additive, so we can define a map 
\[t: E_0 = \Mod_{E_0}(E_0, E_0) \to \Mod_{E_0}(\leftlin\TT_m(E_0), \leftlin\TT_m(E_0))\]
by the formula $t:f\mapsto \leftlin\TT_m(f).$ This defines our desired module structure.
\end{proof}

\begin{remark}\label{Tm_bimodule_labelling}
We also use the letter $s$ for the original (left) module structure on $\leftlin\TT_m$. This is intentionally the opposite choice of letters and sides as $E^0(B\Sigma_m)/I_m$. It is also the same sort of bimodule structure that appears in the Eilenberg-Watts Theorem.
\end{remark}

This determines the values of $\leftlin \TT_m$ on all finitely generated projective modules.

\begin{prop}\label{ff_comp}
Let $M$ be a finitely generated projective left $E_0$-module concentrated in even degrees. Then there is a natural isomorphism $\leftlin \TT_m(M) \leftarrow {}^s(\leftlin \TT_m(E_0))^t \otimes_{E_0} M$.
\end{prop}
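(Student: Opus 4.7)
The plan is to exhibit an explicit natural map from right to left and check that it is an isomorphism by reducing step by step from the projective case to the tautological case $M = E_0$.

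First, I would construct the comparison map. Given $m \in M$, write $\widetilde{m}: E_0 \to M$ for the $E_0$-linear map sending $1$ to $m$. Since $\leftlin \TT_m$ is additive (Proposition \ref{univ_prop_left_lin}), applying it to $\widetilde{m}$ yields an $E_0$-linear map $\leftlin \TT_m(\widetilde{m}): \leftlin \TT_m(E_0) \to \leftlin \TT_m(M)$. I would define
\[ \phi_M: {}^s(\leftlin \TT_m(E_0))^t \otimes_{E_0} M \longrightarrow \leftlin \TT_m(M), \qquad \phi_M(x \otimes m) = \leftlin \TT_m(\widetilde{m})(x). \]

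Second, I would verify that $\phi_M$ respects the bimodule tensor relation $x \cdot c \otimes m = x \otimes cm$. The factorization $\widetilde{cm} = \widetilde{m} \circ (c \cdot -)$ gives, by functoriality, $\leftlin \TT_m(\widetilde{cm}) = \leftlin \TT_m(\widetilde{m}) \circ \leftlin \TT_m(c \cdot -)$. By the very definition of the $t$-action in Proposition \ref{right_mod_struct_on_linearization}, we have $\leftlin \TT_m(c \cdot -)(x) = x \cdot c$, so the relation is preserved. Naturality of $\phi_M$ in $M$ is then immediate from functoriality of $\leftlin \TT_m$.

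Third, I would show $\phi_M$ is an isomorphism by the standard reduction. For $M = E_0$, we have $\phi_{E_0}(x \otimes 1) = \leftlin \TT_m(\operatorname{id})(x) = x$, so $\phi_{E_0}$ is the canonical right unit isomorphism. The functor $\leftlin \TT_m$ is additive by Proposition \ref{univ_prop_left_lin}, and ${}^s(\leftlin \TT_m(E_0))^t \otimes_{E_0} -$ is additive on general grounds, so both sides preserve finite direct sums. Hence $\phi_{E_0^n}$ is an isomorphism for every $n$. Finally, any finitely generated projective $M$ concentrated in even degrees is a retract of some $E_0^n$; as a natural transformation between additive functors, $\phi_M$ is then a retract of $\phi_{E_0^n}$ and therefore an isomorphism.

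I do not anticipate any serious obstacle. The only careful point is the bookkeeping in the second step: the two bimodule structures on $\leftlin \TT_m(E_0)$ must interact with the tensor product exactly as needed, which falls out directly from the definition of the $t$-action as $\leftlin \TT_m$ applied to scalar multiplication. Everything else is formal from additivity and the fact that finitely generated projective modules are retracts of finite free ones.
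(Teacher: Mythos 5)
Your proof is correct, but it takes a more hands-on route than the paper's. The paper invokes the Eilenberg--Watts theorem as a black box: it passes to the right exact approximation $R_0\leftlin\TT_m$, observes that the comparison $R_0\leftlin\TT_m \to \leftlin\TT_m$ is an isomorphism on projectives, and cites Eilenberg--Watts (using Proposition \ref{dual_lin_colimits}) to identify $R_0\leftlin\TT_m$ with the stated tensor product. You instead unpack that machinery: you build the comparison map $\phi_M$ explicitly via $\phi_M(x\otimes m)=\leftlin\TT_m(\widetilde m)(x)$, check the balancing relation against the $t$-action of Proposition \ref{right_mod_struct_on_linearization}, and then reduce from projectives to finite frees to $E_0$ itself by additivity. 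This is precisely the proof of Eilenberg--Watts specialized to the present situation, so the content is the same; your version is more self-contained and makes the role of the $t$-module structure transparent (a point worth being careful about, and you handle it correctly, using commutativity of $E_0$ to identify $c\cdot-$ with $-\cdot c$), while the paper's is shorter at the cost of an external citation. Both are fine.
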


\begin{proof}
The functor $\leftlin \TT_m$ is additive by construction, so we can ask for its right exact approximation $R_0\leftlin \TT_m$, which comes with a comparison map $R_0\leftlin \TT_m \to \leftlin \TT_m$. This map is always an isomorphism on projective modules. Since $\leftlin \TT_m$ preserves coproducts by Proposition \ref{dual_lin_colimits}, the Eilenberg-Watts theorem applies and says that $R_0\leftlin \TT_m$ is given by the cited tensor product.
\end{proof}

\section{Some Bimodules}\label{bimodules}
Though the statement of Theorem \ref{main_theorem} is about certain quotients of cohomology modules, the proof will make heavy use of a dual version. This section defines and studies those duals.

\subsection{The Bimodules $J_m$}

Consider the subgroups $\Sigma_i \times \Sigma_j$ of $\Sigma_m$, for $i+j=m$, $i,j\neq m$ from Example \ref{embeddings}. The inclusions induce transfer maps 

\[E_0^{\wedge}(\Sigma_m) \to E_0^{\wedge}(B(\Sigma_i\times \Sigma_j) \cong E_0^{\wedge}(B\Sigma_i)\otimes E_0^{\wedge}(B\Sigma_j).\] 

Taking the product of these transfers gives a map
\begin{equation}\label{sum_homology_transfer}
	E_0^{\wedge}(\Sigma_m) \to \bigoplus_{i= 1}^{m-1} E_0^{\wedge}(B(\Sigma_i\times \Sigma_{m-i}).
\end{equation} 

\begin{defn}\label{J_m}
	The (left) $E_0$ module $J_m$ is the kernel of the map  (\ref{sum_homology_transfer}). That is, $J_m$ is the common kernel of all transfers from the non-trivial partition subgroups.
\end{defn}

\begin{remark}
In the case where $n=p^k$, for some $k$, Rezk (\cite{congruence_criterion,power_ops_koszul}) calls these $\Gamma[k]$. If $n$ is not a power of $p$, $J_m = 0$. These are also not to be confused with the modules Strickland calls $J_m$ in \cite{strickland_symmetric}.
\end{remark}

\begin{prop}\label{J_m_duality}
The (left) $E_0$-module $J_m$ is finite free, and is $E_0$-linearly dual to the (right) $E_0$-module $E^0(B\Sigma_m)/I_m$
\end{prop}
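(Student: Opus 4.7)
The strategy is to dualize the defining right exact sequence for $E^0(B\Sigma_m)/I_m$ and identify the resulting kernel with $J_m$. First, by Example~\ref{cohom_of_sym_gps}, both $E^0(B\Sigma_m)$ and each $E^0(B(\Sigma_i\times\Sigma_{m-i}))$ are finite free $E_0$-modules with $E_0$-linear duals $E_0^\wedge(B\Sigma_m)$ and $E_0^\wedge(B(\Sigma_i\times\Sigma_{m-i}))$, respectively. By the definition of the ideal $I_m$, we have a right exact sequence
\[
\bigoplus_{i=1}^{m-1} E^0(B(\Sigma_i\times\Sigma_{m-i})) \xrightarrow{\phi} E^0(B\Sigma_m) \to E^0(B\Sigma_m)/I_m \to 0,
\]
where $\phi$ is the sum of the cohomology transfers along the inclusions $\Sigma_i\times\Sigma_{m-i}\leq \Sigma_m$.

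Next, applying the contravariant left exact functor $\Hom_{E_0}(-,E_0)$ yields a left exact sequence
\[
0 \to (E^0(B\Sigma_m)/I_m)^\vee \to E_0^\wedge(B\Sigma_m) \xrightarrow{\phi^\vee} \bigoplus_{i=1}^{m-1} E_0^\wedge(B(\Sigma_i\times\Sigma_{m-i})).
\]
The crucial point is that $\phi^\vee$ is precisely the product of the \emph{homology} transfers: since each transfer is a single stable map of spectra, the cohomology and completed-homology maps it induces are $E_0$-linear duals of each other under the duality pairing supplied by Example~\ref{cohom_of_sym_gps}. By Definition~\ref{J_m}, the kernel of $\phi^\vee$ is exactly $J_m$, so we obtain $J_m \cong (E^0(B\Sigma_m)/I_m)^\vee$ as left $E_0$-modules.

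For finite freeness, we invoke Strickland's Theorem~1.1 (and the explicit computation of $E^0(B\Sigma_m)/I_m$ in \cite{strickland_symmetric}): the ring $E^0(B\Sigma_m)/I_m$ is finite free over $E_0$, being the ring of functions on the scheme of subgroup schemes of $\mathbb{G}$ of order $m$. Its dual $J_m$ is therefore also finite free; since duality on finite free $E_0$-modules is reflexive, the isomorphism above also exhibits $E^0(B\Sigma_m)/I_m$ as the right $E_0$-linear dual of $J_m$, completing the proof.

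The main obstacle to watch out for is the compatibility claim that $\phi^\vee$ is literally the product of the homology transfers. This amounts to naturality of the Kronecker pairing $E^0(X) \cong \Hom_{E_0}(E_0^\wedge(X), E_0)$ with respect to stable maps between spectra whose cohomology is finite free; although standard, it is the one place where the identification of cohomology with the $E_0$-linear dual of completed homology has to be used functorially rather than just as an abstract isomorphism, and is worth spelling out carefully before citing.
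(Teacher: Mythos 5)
Your proof is correct and takes essentially the same route as the paper: both dualize the right exact sequence defining $E^0(B\Sigma_m)/I_m$, identify the dualized map as the product of homology transfers (the paper cites this directly from Strickland; you justify it via naturality of the Kronecker pairing, which is the underlying reason), and conclude via finite freeness of the quotient. One small citation note: the finite freeness of $E^0(B\Sigma_m)/I_m$ is proved as Theorem 8.6 of Strickland's paper, not Theorem 1.1 (the latter is the representability statement, from which freeness follows only after unwinding the definition of the classifying scheme), so the more precise reference is 8.6.
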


\begin{proof}
Strickland \cite{strickland_symmetric} shows that transfer maps in (completed) $E$-homology are dual to transfer maps in $E$-cohomology, if the groups involved have free $E$-cohomology. Thus, the map in (\ref{sum_homology_transfer}) is dual to the cohomology map whose cokernel is $E^0(B\Sigma_m)/I_m$ (with the right module structure from before). Since  $E^0(B\Sigma_m)/I_m$ is finite free as a right module, we get the proposition.
\end{proof}

In Example \ref{quotient_of_sym_gp_bimod}, we saw that $E^0(B\Sigma_m)/I_m$ has a bimodule structure, coming from a total power operation. The previous proposition identifies $J_m$ as $\operatorname{Hom}_{E_0}(E^0(B\Sigma_m)/I_m, E_0)$. The left module structure on $E^0(B\Sigma_m)/I_m$ then induces a right module structure on $J_m$, but this .

\begin{defn}\label{right_mod_J_m}
The right module structure on $J_m$ is the dual of the left module structure $t$ on $E^0(B\Sigma_m)/I_m$. We will also refer to this right module structure as $t$.
\end{defn}

% When we want to consider the full bimodule structure on $J_m$, we write ${}^s\!J_m^t$. If we only want to consider one of the structures, we will write the appropriate one as a superscript on the appropriate side.

It turns out that $J_m$ also appears as a value of the left linearization of $\TT_m$.

\begin{prop}\label{dual_lin_on_point}
As a left $E_0$-modules, we can identify $\leftlin\TT_m(E_0)$ and ${}^s\!J_m.$
\end{prop}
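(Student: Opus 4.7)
The plan is to unwind the definition of $\leftlin\TT_m$ directly and identify the resulting equalizer with $J_m$. By Propositions \ref{Tm_of_point} and \ref{Tm_of_sum}, we have canonical identifications $\TT_m(E_0) \cong E_0^\wedge(B\Sigma_m)$ and $\TT_m(E_0 \oplus E_0) \cong \bigoplus_{i+j=m} \TT_i(E_0) \otimes \TT_j(E_0) \cong \bigoplus_{i+j=m} E_0^\wedge(B\Sigma_i) \otimes E_0^\wedge(B\Sigma_j)$, and we need to compute the two parallel maps whose equalizer defines $\leftlin\TT_m(E_0)$.

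First I would analyze $\TT_m(i_1) + \TT_m(i_2)$. Viewing $i_1 : E_0 \to E_0 \oplus E_0$ as $\operatorname{id}_{E_0} \oplus (0 \to E_0)$ (and similarly for $i_2$), the naturality clause of Proposition \ref{Tm_of_sum} decomposes $\TT_m(i_1)$ as a direct sum $\bigoplus \TT_i(\operatorname{id}) \otimes \TT_j(0 \to E_0)$. The factor $\TT_j(0 \to E_0)$ factors through $\TT_j(0)$, which vanishes for $j > 0$ since $\TT_j$ is reduced; only the $(m,0)$ summand survives, and there the map is the identity. Thus $\TT_m(i_1)$ (resp.\ $\TT_m(i_2)$) is the identity inclusion onto the $(m,0)$-summand (resp.\ $(0,m)$-summand) and zero elsewhere.

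The main step is identifying the components of $\TT_m(i_1 + i_2)$ with the partition transfer maps in $E$-homology. The algebraic diagonal $i_1 + i_2 : E_0 \to E_0 \oplus E_0$ is the image on $\pi_0$ of the spectrum-level diagonal $\Delta : E \to E \vee E$, in which both source and target are finite free $E$-modules. By the naturality of $\alpha_m$ from Theorem \ref{Tm_exists}, together with the fact that $\alpha_m$ is an isomorphism on finite free $E$-modules, the map $\TT_m(i_1 + i_2)$ gets identified with $\pi_0 L_{K(h)}\PP_m^E(\Delta)$ under the identifications above (after also using Propositions \ref{extended_power_of_unit}, \ref{extended_power_of_wedge}, and \ref{comp_P} to match the target decomposition with the smash product decomposition of $\PP_m^E(E \vee E)$). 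Proposition \ref{transfer_is_diagonal} then shows that the projection of this composite onto the $(i,j)$-summand is precisely the transfer $\transfer^{\Sigma_m}_{\Sigma_i \times \Sigma_j}$ in completed $E$-homology.

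Putting these together, $\leftlin \TT_m(E_0) = \ker\bigl(\TT_m(i_1 + i_2) - \TT_m(i_1) - \TT_m(i_2)\bigr)$ is the kernel of a map whose $(i,j)$-component is $\transfer^{\Sigma_m}_{\Sigma_i \times \Sigma_j}$ when $0 < i < m$ and vanishes on the two extreme summands (since the transfer for $\Sigma_m \leq \Sigma_m$ is the identity, cancelling against the corresponding summand of $\TT_m(i_1)+\TT_m(i_2)$). By Definition \ref{J_m}, this kernel is exactly $J_m$. Both sides inherit the same underlying left $E_0$-module structure from $E_0^\wedge(B\Sigma_m)$, which is the structure labeled $s$ in Remark \ref{Tm_bimodule_labelling}, giving the desired isomorphism of left modules. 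The main obstacle is the third paragraph: rigorously transporting the algebraic map $\TT_m(i_1 + i_2)$ to the topological $\pi_0 L_{K(h)}\PP_m^E(\Delta)$ via $\alpha_m$ and correctly matching the algebraic and topological direct sum decompositions so that Proposition \ref{transfer_is_diagonal} applies; once that translation is in place, the remainder is bookkeeping.
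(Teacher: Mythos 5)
Your proposal is correct and follows essentially the same route as the paper's proof: identify $\TT_m(E_0\oplus E_0)$ with the sum of $E_0^\wedge(B(\Sigma_i\times\Sigma_j))$ via Propositions \ref{Tm_of_point}, \ref{Tm_of_sum}, and \ref{extended_power_of_wedge}, recognize $\TT_m(i_1+i_2)$ as the direct sum of partition transfers via Proposition \ref{transfer_is_diagonal}, observe that $\TT_m(i_1)+\TT_m(i_2)$ only hits the two extreme summands, and conclude the equalizer is $J_m$. Your extra care in paragraph two about why $\TT_m(i_1)$ and $\TT_m(i_2)$ concentrate on the extreme summands, and your explicit invocation of the naturality of $\alpha_m$ in paragraph three to pass between the algebraic $\TT_m$ and topological $\pi_0 L_{K(h)}\PP_m^E$ pictures, are exactly the bookkeeping the paper elides, so the self-identified ``main obstacle'' is not a gap but a step the paper takes for granted.
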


\begin{proof}
We have that $\TT_m(E_0 \oplus E_0) = \pi_0L_{K(h)}\PP_m^E(E\vee E)$, and $\TT_m(\Delta) = \pi_0L_{K(h)}\PP_m^E(\Delta)$. By Propositions \ref{Tm_of_point} and \ref{extended_power_of_wedge}, we can make the identifications
\[\TT_m(E_0 \oplus E_0) \cong \bigoplus_{i+j = n} \TT_i(E_0) \otimes_{E_0} \TT_j(E_0) \cong \bigoplus_{i+j = n} E_0^{\wedge}(B(\Sigma_i\times \Sigma_j)).\]
Then by Proposition \ref{transfer_is_diagonal}, 
\[\TT_m(\Delta) = \bigoplus_{i+j=m} \transfer_{\Sigma_i \times \Sigma_j}^{\Sigma_m}.\]
The other map $\TT_m(i_1)+\TT_m(i_2)$ in the definition of left linearization is the inclusion into the $i=0$ and $j=0$ factors. Then the equalizer is the kernel of the nontrivial transfer maps
\[\leftlin\TT_m(E_0) = \operatorname{ker}\left(\bigoplus_{0<i<m}\transfer_{\Sigma_i \times \Sigma_{m-i}}^{\Sigma_m}\right).\]
By Definition \ref{J_m} this kernel is also $J_m$ (with the usual module structure $s$).
\end{proof}

Proposition \ref{right_mod_struct_on_linearization} gives $\leftlin \TT_m(E_0)$ a bimodule structure. The isomorphism of Proposition \ref{dual_lin_on_point} is in fact an isomorphism of bimodules.

\begin{prop}\label{module_structures_same}
As an $E_0$-$E_0$-bimodule, ${}^s\leftlin \TT_m(E_0)^t$ is isomorphic to ${}^s\!J_m^t$
\end{prop}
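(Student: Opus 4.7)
Proposition \ref{dual_lin_on_point} already provides the isomorphism on underlying left $E_0$-modules, so the task is to check compatibility with the two right actions. First I would unpack both actions. By Proposition \ref{right_mod_struct_on_linearization}, the right action of $a\in E_0$ on $\leftlin\TT_m(E_0)$ is the restriction of $\TT_m(\mu_a)\colon\TT_m(E_0)\to\TT_m(E_0)$ to the equalizer $\leftlin\TT_m(E_0)$, where $\mu_a$ is multiplication by $a$. By Definition \ref{right_mod_J_m}, the right action of $a$ on $J_m$ is the $E_0$-linear dual of multiplication by $\bar P_m(a)$ on $E^0(B\Sigma_m)/I_m$, which lifts to multiplication by the power-operation class $P_m(a)\in E^0(B\Sigma_m)$.

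The central step is to show that $\TT_m(\mu_a)\colon E_0^\wedge(B\Sigma_m)\to E_0^\wedge(B\Sigma_m)$ is $E_0$-linearly dual to multiplication by $P_m(a)$ on $E^0(B\Sigma_m)$, under the perfect pairing of Example \ref{cohom_of_sym_gps}. Since $\TT_m$ agrees with $\pi_0L_{K(h)}\PP^E_m$ on the finite free $E$-module $E$ (Theorem \ref{Tm_exists}), this amounts to analyzing the $E$-module self-map $\PP^E_m(\mu_a)$ of $\PP^E_m(E)\simeq E\wedge\Sigma_+^\infty B\Sigma_m$. Writing $\mu_a=\mu\circ(\alpha\wedge\mathrm{id}_E)$ for some $\alpha\colon\Sphere\to E$ representing $a$, and using the natural comparison between $\PP^\Sphere_m$ and $\PP^E_m$ on free $E$-modules together with the $E_\infty$-structure map $\mu_m\colon\PP^\Sphere_m(E)\to E$, I would identify $\PP^E_m(\mu_a)$ with multiplication by the class $P_m(a)=\mu_m\circ\PP^\Sphere_m(\alpha)$, viewed as a self-map of $E\wedge\Sigma_+^\infty B\Sigma_m$ via the diagonal on $B\Sigma_m$.

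Once this identification is in place, one checks that $\TT_m(\mu_a)$ preserves $J_m$, which follows from a functoriality diagram chase using $\Delta\circ\mu_a=(\mu_a\oplus\mu_a)\circ\Delta$ together with the defining equalizer characterization of $J_m=\leftlin\TT_m(E_0)$ and Proposition \ref{Tm_of_sum}. Its restriction to $J_m$ then becomes dual to multiplication by $\bar P_m(a)$ on the quotient $E^0(B\Sigma_m)/I_m$, which is precisely the right $t$-module structure on $J_m$ from Definition \ref{right_mod_J_m}, yielding the bimodule isomorphism.

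The principal obstacle is the geometric identification $\PP^E_m(\mu_a)\leftrightarrow(\cdot\,P_m(a))$. Though morally this is the definition of the total power operation, making it precise requires carefully comparing the $\Sphere$- and $E$-extended power functors and their respective $E_\infty$-structure maps, as well as compatibility with diagonals. A naive calculation treating the equivalence $E^{\wedge_E m}\simeq E$ as $\Sigma_m$-equivariantly trivial would misleadingly suggest that $\TT_m(\mu_a)$ is multiplication by $a^m$; the correct answer involves the nontrivial $\Sigma_m$-equivariance supplied by the $E_\infty$-operadic structure on $E$, and this discrepancy is exactly what $P_m(a)$ encodes.
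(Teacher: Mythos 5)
Your approach is essentially the paper's: both reduce the bimodule comparison to the single claim that, under the duality between $E_0^\wedge(B\Sigma_m)$ and $E^0(B\Sigma_m)$, the endomorphism $\TT_m(\mu_a) = \pi_0 L_{K(h)}\PP^E_m(\mu_a)$ is dual to multiplication by $P_m(a)$, and both observe that applying $\leftlin$ to the (non-additive) assignment $a\mapsto\TT_m(\mu_a)$ produces the $t$-action. Your diagnosis that a naive treatment of $E^{\wedge_E m}\simeq E$ would spuriously yield ``multiplication by $a^m$'' is accurate and shows you have located the genuine subtlety. Where you diverge is that you explicitly flag the identification $\PP^E_m(\mu_a)\leftrightarrow (\cdot\,P_m(a))$ as the ``principal obstacle'' and leave it as a sketch, whereas the paper supplies the mechanism: it notes that $\PP^E_m E \simeq E\wedge\Sigma_+^\infty B\Sigma_m$ is a comonoid in $\Mod_E$ (counit $\eta$ from the multiplication/collapse map, comultiplication from the diagonal of $B\Sigma_m$), so $\Mod_E(\PP^E_m E, E)$ is a monoid, and the counit identity yields a commutative square identifying the $K(h)$-local dual of any self-map $f$ of $\PP^E_m E$ with multiplication by $\eta\circ f \in E^0(B\Sigma_m)$. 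Taking $f=\PP^E_m(\mu_a)$ and $\pi_0$ gives exactly your desired statement, once one knows that $\eta\circ\PP^E_m(\mu_a)$ \emph{is} $P_m(a)$, which follows from Section \ref{power_ops} together with the free/forgetful adjunction relating $\PP^\Sphere_m$ and $\PP^E_m$. To close your gap you should make this comonoid/counit-identity argument precise rather than appealing generally to ``carefully comparing the $\Sphere$- and $E$-extended power functors.'' One small redundancy: your final step of ``checking that $\TT_m(\mu_a)$ preserves $J_m$'' is automatic --- $\leftlin\TT_m(\mu_a)$ is by definition the restriction of $\TT_m(\mu_a)$ along the natural inclusion $\leftlin\TT_m \hookrightarrow \TT_m$, so no diagram chase is needed there.
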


\begin{proof}
%A diagram (which is confusing and probably wrong so I commented it out):
% Since the functor $\leftlin\TT_m$ is additive, applying it gives an abelian group homomorphism
% \[\leftlin\TT_m: E_0 = \Mod_{E_0}(E_0, E_0) \to \Mod_{E_0}(\leftlin\TT_m(E_0), \leftlin\TT_m(E_0)),\]
% and thus another $E_0$-module structure that we will denote by $t$ and consider to be a right module structure. We will now show that under the identification of the previous proposition, this right module structure coincides with the right module structure $t$ on $J_m$.

By Proposition \ref{extended_power_of_unit} the multiplication map $\eta :\PP_m^E E \to E$ is also given by the map crushing $B\Sigma_m$ down to a point.  Also, the diagonal map of $B\Sigma_m$ induces $\PP_m^E E$ a comultiplication \[\Delta: \PP_m^E E \to \PP_m^E E \sm_E \PP_m^E E,\]. These two maps give $\PP_m^E E$ a comonoid structure in the category of $E$-modules (with symmetric monoidal product $\wedge_E$). We then get a monoid structure $(\mu, \epsilon)$ on $\Mod_E(\PP_m^E E, E).$
The following diagram commutes, by the counit identity for comonoids 
\begin{equation*}
\begin{tikzcd}[column sep=-3em]
 	\Mod_E(\PP_m^E E, \PP_m^E E) \arrow[crossing over]{rr}{L_{K(h)}} \arrow{dr} \arrow[swap]{dd}{\eta}&& \Mod_E(L_{K(h)}\PP_m^E E, L_{K(h)}\PP_m^E E)\arrow[equal]{dd} \arrow[crossing over]{dl}{\eta}\\
 	&\Mod_E(L_{K(h)}\PP_m^E E, E) \arrow{dl}{\simeq}& \\
 	\Mod_E(\PP_m^E E, E) \arrow{rr}{\operatorname{Adj}(\mu)} && \Mod_E(\Mod_E(\PP_m^E E, E), \Mod_E(\PP_m^E E, E))
\end{tikzcd}
\end{equation*}

We can make the left vertical identification since $L_{K(h)} (E\sm B\Sigma_m)$ is $K(h)$-locally dualizable (\cite{hovey_strickland} Corollary 8.7, or \cite{strickland_symmetric}, Proposition 3.7).
Since $E^0(B\Sigma_m)$ and $E^{\wedge}_0(B\Sigma_m)$ are free $E_0$-modules, we can apply $\pi_0$ and use the equivalence of categories from Proposition \ref{finite_free_equivalence} to get a commutative diagram
\begin{equation*}
\begin{tikzcd}
	\pi_0\Mod_E(\PP^E_m E, \PP^E_m E) \arrow{r} \arrow{d} & \Mod_{E_0}(E^{\wedge}_0B\Sigma_m, E^{\wedge}_0B\Sigma_m) \\
	E^0B\Sigma_m \arrow{r} & \Mod_{E_0}(E^0B\Sigma_m, E^0B\Sigma_m). \arrow[equal]{u}
\end{tikzcd}
\end{equation*}
Now we can precompose with the map of \emph{sets} \[\pi_0\PP_m^E: E_0 = \pi_0\Mod_E(E, E) \to \pi_0\Mod_E(\PP^E_m E, \PP^E_m E))\] to get a diagram (\emph{of sets})
\begin{equation*}
\begin{tikzcd}
	E_0 \arrow{dr}{\pi_0\PP_m^E} \arrow[bend right]{ddr} \arrow[bend left=10]{rrd}\\
	&\pi_0\Mod_E(\PP^E_m E, \PP^E_m E) \arrow{r} \arrow{d} & \Mod_{E_0}(E^{\wedge}_0B\Sigma_m, E^{\wedge}_0B\Sigma_m) \\
	&E^0B\Sigma_m \arrow{r} & \Mod_{E_0}(E^0B\Sigma_m, E^0B\Sigma_m) \arrow[equal]{u}
\end{tikzcd}
\end{equation*}
The bottom arrow is now the map realizing $E^0(B\Sigma_m)$ as a module over itself via multiplication., and the curved arrows are defined as the compostions making the diagram commute. The discussion from Section \ref{power_ops} tells us that the left curved arrow is the total power operation $P_m$ on $E_0$, and Theorem \ref{Tm_exists} shows the top curved arrow is given by application of the functor $\TT_m$ to $E_0$. Neither of these maps are additive. However, further applying $\leftlin$ to the top curved arrow gives the $E_0$-module structure $t$ on $\leftlin\TT_m(E_0)$. The left curved arrow and bottom arrow give the left $E_0$ module structure $t$ on $E^0(B\Sigma_m)/I_m$ after passing to quotients. Dualizing this gives the right $E_0$-module structure $t$ on $J_m$ from Definition \ref{right_mod_J_m}. The commutativity of these diagrams show that these two module structures coincide.
\end{proof}

\subsection{The Modules $J_\mathbf{m}$}

We now define the objects that will be dual to the bimodules of Example \ref{quotient_of_wreath_product_bimod}. These are analogs of the $J_m$, for multiple wreath product factors.

Let $\mathbf{m} = (m_1, \ldots, m_\ell)$ be a sequence of positive integers as in Example \ref{quotient_of_wreath_product_bimod}, and let $\Sigma_{\mathbf{m}}$ and $\Sigma_{\mathbf{m},t,j}$ be the iterated wreath products and subgroups thereof defined there. Let $\TT_\mathbf{m} = \TT_{m_\ell}\circ \ldots \circ \TT_{m_2} \circ \TT_{m_1}$, and note that $\TT_\mathbf{m}(E_0) = E_0^\wedge(B\Sigma_\mathbf{m}),$ by Proposition \ref{comp_of_Tm}. 

Now we can consider the common kernel of the homology transfer maps from these subgroups.

\begin{defn}\label{J_bold_m}
Set $J_\mathbf{m} \subseteq E_0^{\wedge}(B\Sigma_\mathbf{m})$ to be the kernel of the map given by
\[\bigoplus_{\substack{1\leq t\leq\ell\\
					0<j<m_t}}
\operatorname{tr}^{\Sigma_\mathbf{m}}_{\Sigma_{\mathbf{m},t,j}}.\] 
\end{defn}

For instance, if $\mathbf{m} = (m)$ has length one, we recover the $J_m$ of Definition \ref{J_m}.

\subsection{Identifying $J_\mathbf{m}$ with a left linearization}

For  $J_\mathbf{m}$ to be useful, we want to identify it as a value of a left linearization, as in Proposition \ref{dual_lin_on_point}. We need a couple of technical lemmas to let us do that.

Recall the sets $\Lambda_{m,k}$ from section \ref{extended_powers_section}:
\[
\Lambda_{m,k} = \left\{ (\lambda_0, \ldots, \lambda_m) \in \N^k \mid \sum_{i=0}^k \lambda_i = m \right\}.
\]
Let $G$ be a finite group, and let $\mathcal{H} = \{H_0, \ldots, H_k\}$  be a finite list of (not necessarily distinct) subgroups of $G$, with $H_0 = H_k = G$. Also assume that $E_0^\wedge(BG)$ and all of the $E_0^\wedge(BH_i)$ are finite free $E_0$-modules.

For $\lambda \in \Lambda_{m,k}$, define
\[
\widetilde{\Sigma_{\mathcal{H}, \lambda}} = \prod_{i=0}^k H_i\wreath\Sigma_{\lambda_i} \leq G\wreath\Sigma_m.
\]
Finally, consider the following two families of elements of $\Lambda_{m,k}$. For $0 \leq i \leq k$, define $\mu_i = (0, \ldots, 0, m, 0, \ldots, 0)$, where the only nonzero entry is in the $i$-th slot, and for $0 < r < m$, define $\nu_r = (r, 0, \ldots, 0, m-r)$. Note that 
\[
\widetilde{\Sigma_{\mathcal{H},\mu_i}} = H_i\wreath\Sigma_m,
\]
\[
\widetilde{\Sigma_{\mathcal{H},\nu_r}} = G\wreath(\Sigma_r\times\Sigma_{m-r}),
\]
and if $\lambda = (m, 0, \ldots , 0)$ or $\lambda = (0, \ldots, 0, m)$, then $\widetilde{\Sigma_{\mathcal{H}, \lambda}} =G\wreath\Sigma_m$.
\begin{lemma}\label{annoying_partition_lemma}
Unless $\lambda = \mu_i$ for some $i$, $\widetilde{\Sigma_{\mathcal{H},\lambda}}$ is a subgroup of $\widetilde{\Sigma_{\mathcal{H},\nu_r}}$ for some $0 < r < m.$
\end{lemma}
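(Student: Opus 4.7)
The plan is to show that failing the concentration condition $\lambda = \mu_i$ is precisely what lets us group the $k+1$ indices into two non-empty blocks whose $\lambda$-masses supply a valid $\nu_r$, so that the product $\widetilde{\Sigma_{\mathcal{H},\lambda}}$ fits into the block decomposition of $G\wreath(\Sigma_r\times\Sigma_{m-r})$.

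The first step is to unpack the hypothesis. Since $\sum_{i=0}^k \lambda_i = m$, requiring $\lambda \neq \mu_i$ for every $i$ is the same as requiring that no single coordinate $\lambda_i$ carries all of $m$. Consequently, there must exist an index $i_0$ with $0 < \lambda_{i_0} < m$; I would set $r := \lambda_{i_0}$, which immediately lies in the range $0 < r < m$ demanded by the statement. Then let $A = \{i_0\}$ and $B = \{0,\ldots,k\} \setminus \{i_0\}$, so $\sum_{i\in A}\lambda_i = r$ and $\sum_{i\in B}\lambda_i = m-r$.

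Next, I would produce the desired containment factor by factor. The product defining $\widetilde{\Sigma_{\mathcal{H},\lambda}}$ regroups along $A \sqcup B$ as
\[
\bigl(H_{i_0}\wreath\Sigma_r\bigr) \times \prod_{i\in B}\bigl(H_i\wreath\Sigma_{\lambda_i}\bigr).
\]
Using $H_i \leq G$ for every $i$, together with the block-wise embedding $\prod_{i\in B}\Sigma_{\lambda_i} \hookrightarrow \Sigma_{m-r}$ from Example \ref{embeddings}, the second factor lives inside $G\wreath\Sigma_{m-r}$, while the first lives inside $G\wreath\Sigma_r$. The evident identification
\[
(G\wreath\Sigma_r)\times(G\wreath\Sigma_{m-r}) \;\cong\; G\wreath(\Sigma_r\times\Sigma_{m-r}) \;=\; \widetilde{\Sigma_{\mathcal{H},\nu_r}}
\]
is compatible with the standard block conventions, which yields the stated inclusion.

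The only potential obstacle is bookkeeping: I have to be sure that the subgroup embeddings into the ambient $\Sigma_m$ prescribed by Example \ref{embeddings} (for both direct and wreath products) line up with the coarser block partition $\{1,\ldots,m\} = \{1,\ldots,r\} \sqcup \{r+1,\ldots,m\}$ being used on the target side. Since all embeddings involved are the standard block-permutation ones, reordering the factors according to $A \sqcup B$ and contracting each block realizes the containment strictly within the prescribed conventions, so this is a formality rather than a genuine obstruction.
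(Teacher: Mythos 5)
There is a genuine gap. Your choice of the index $i_0$ is too loose: you take any $i_0$ with $0 < \lambda_{i_0} < m$ and set $r = \lambda_{i_0}$, then split the index set as $A = \{i_0\}$, $B = \{0,\ldots,k\}\setminus\{i_0\}$. But the groups $\widetilde{\Sigma_{\mathcal{H},\lambda}}$ and $\widetilde{\Sigma_{\mathcal{H},\nu_r}}$ are embedded in $G\wreath\Sigma_m$ via the \emph{consecutive} block conventions of Example \ref{embeddings}: the $\lambda$-blocks are the consecutive intervals of lengths $\lambda_0, \lambda_1, \ldots, \lambda_k$, and the $\nu_r$-blocks are $\{1,\ldots,r\}$ and $\{r+1,\ldots,m\}$. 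Containment $\widetilde{\Sigma_{\mathcal{H},\lambda}} \leq \widetilde{\Sigma_{\mathcal{H},\nu_r}}$ holds if and only if the $\lambda$-block partition refines the $\nu_r$-partition, i.e.\ if and only if $r$ is one of the partial sums $\sum_{i \leq j} \lambda_i$. An arbitrary nonzero $\lambda_{i_0}$ need not be such a partial sum. Concretely, take $m = 4$, $k = 2$, $\lambda = (1,2,1)$, and $i_0 = 1$, so $r = 2$: the $\lambda$-blocks are $\{1\},\{2,3\},\{4\}$ while the $\nu_2$-blocks are $\{1,2\},\{3,4\}$, and the factor $H_1\wreath\Sigma_2$ acting on $\{2,3\}$ contains the transposition $(2\,3)$, which does not preserve the $\nu_2$-partition. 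So $\widetilde{\Sigma_{\mathcal{H},\lambda}} \not\leq \widetilde{\Sigma_{\mathcal{H},\nu_2}}$ for this choice.

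Your closing remark that the discrepancy is resolved by ``reordering the factors according to $A\sqcup B$'' is precisely where the argument fails: reordering the blocks conjugates $\widetilde{\Sigma_{\mathcal{H},\lambda}}$ by a block-permuting element of $\Sigma_m$, producing a different (merely conjugate) subgroup. The lemma asserts containment of the literal subgroup with the literal embedding. The fix is small: choose $i_0$ to be the \emph{smallest} index with $\lambda_{i_0} \neq 0$. Then the first $r = \lambda_{i_0}$ letters form exactly the $i_0$-th $\lambda$-block (all earlier blocks being empty), the remaining $\lambda$-blocks lie consecutively in $\{r+1,\ldots,m\}$, and no reordering is needed. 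The hypothesis $\lambda \neq \mu_i$ guarantees some other $\lambda_j > 0$, hence $\lambda_{i_0} < m$ and $0 < r < m$. This is what the paper's proof does.
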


\begin{proof}
If $\lambda = (\lambda_0, \ldots , \lambda_k)$ is not a $\mu_i$, it has at least two nonzero entries. Let $r$ be the smallest integer such that $\lambda_r$ is nonzero. 
Then $\lambda_r \neq m,$ so $\widetilde{\Sigma_{\mathcal{H},\lambda}} \leq \widetilde{\Sigma_{\mathcal{H},\nu_{\lambda_r}}}.$
\end{proof}

The following lemma allows us to calculate the left linearization of iterated $\TT$'s. Recall the groups $\Sigma_{\mathbf{m},t,j}$ and $\Sigma_\mathbf{m}$ from Example \ref{quotient_of_wreath_product_bimod}

\begin{lemma} \label{subgroup_family_lemma}
For each $\Sigma_\mathbf{m}$, there is a collection $\mathcal{H}_\mathbf{m}$ of proper subgroups with the following properties:
\begin{enumerate}
\item Each $H$ in $\mathcal{H}_\mathbf{m}$ has finite free $E_0^\wedge(BH)$.
\item Each $\Sigma_{\mathbf{m},t,j}$ is in $\mathcal{H}_\mathbf{m}$.
\item Each $H$ in $\mathcal{H}_\mathbf{m}$ is contained in some $\Sigma_{\mathbf{m},t,j}$.
\item 
\[\TT_\mathbf{m}(E_0 \oplus E_0) \cong \TT_\mathbf{m}(E_0) \oplus \left(\bigoplus_{H \in \mathcal{H}_\mathbf{m}} E_0^{\wedge}(BH) \right)\oplus \TT_\mathbf{m}(E_0).\]
\item Under the identification of item (4), $\TT_\mathbf{m}(\Delta): \TT_\mathbf{m}(E_0) \to \TT_\mathbf{m}(E_0 \oplus E_0)$ is given by 
\[\TT_\mathbf{m}(i_1) \times \left(\prod_{H\in \mathcal{H}_\mathbf{m}} \operatorname{tr}_H^{\Sigma_\mathbf{m}}\right) \times \TT_\mathbf{m}(i_2).\]
\end{enumerate}
\end{lemma}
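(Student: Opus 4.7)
The plan is to induct on $\ell$, the length of $\mathbf{m}$. For the base case $\ell = 1$, I would take $\mathcal{H}_{(m)} = \{\Sigma_j \times \Sigma_{m-j} : 0 < j < m\}$: conditions (1)--(3) are immediate from Example \ref{cohom_of_sym_gps}, condition (4) is Proposition \ref{Tm_of_sum} together with Proposition \ref{Tm_of_point} (noting $\TT_i(E_0) \otimes \TT_{m-i}(E_0) \cong E_0^\wedge(B(\Sigma_i \times \Sigma_{m-i}))$ by K\"unneth in the finite free range), and condition (5) follows by applying Proposition \ref{transfer_is_diagonal} componentwise, after recognizing that $\TT_m(\Delta)$ is the algebraic shadow of $\PP_m(\Delta)$ via Theorem \ref{Tm_exists}.

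For the inductive step, assume the lemma holds for $\mathbf{m}' = (m_1, \ldots, m_{\ell-1})$ with collection $\mathcal{H}_{\mathbf{m}'} = \{H_1, \ldots, H_k\}$. I would form the list $\mathcal{H} = \{\Sigma_{\mathbf{m}'}, H_1, \ldots, H_k, \Sigma_{\mathbf{m}'}\}$ (indexed $0, \ldots, k+1$ as in Lemma \ref{annoying_partition_lemma}) and apply $\TT_{m_\ell}$ to the inductive splitting of $\TT_{\mathbf{m}'}(E_0 \oplus E_0)$. Expanding via Proposition \ref{Tm_of_sum} and identifying each factor with a completed homology group via Proposition \ref{comp_of_Tm} produces
\[
\TT_\mathbf{m}(E_0 \oplus E_0) \cong \bigoplus_{\lambda \in \Lambda_{m_\ell, k+1}} E_0^\wedge(B\widetilde{\Sigma_{\mathcal{H}, \lambda}}).
\]
Precisely the summands at $\lambda = \mu_0$ and $\lambda = \mu_{k+1}$ recover $E_0^\wedge(B\Sigma_\mathbf{m}) = \TT_\mathbf{m}(E_0)$, so I would define $\mathcal{H}_\mathbf{m}$ to consist of the remaining $\widetilde{\Sigma_{\mathcal{H}, \lambda}}$, giving (4). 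Conditions (1)--(3) are then bookkeeping: finite freeness because each $\widetilde{\Sigma_{\mathcal{H}, \lambda}}$ is assembled inductively from symmetric groups by products and wreath products (Example \ref{cohom_of_sym_gps}); the level-$t$ partition subgroups appear as $\widetilde{\Sigma_{\mathcal{H}, \mu_i}}$ (for $t < \ell$, taking $H_i = \Sigma_{\mathbf{m}', t, j}$) or $\widetilde{\Sigma_{\mathcal{H}, \nu_j}}$ (for $t = \ell$); and containment in some $\Sigma_{\mathbf{m}, t, j}$ comes from Lemma \ref{annoying_partition_lemma} when $\lambda$ is not a $\mu_i$, and from the inductive hypothesis (wreath-producted with $\Sigma_{m_\ell}$) when $\lambda = \mu_i$ with $0 < i \leq k$.

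The heart of the argument is (5). Applying $\TT_{m_\ell}$ to the inductive expression for $\TT_{\mathbf{m}'}(\Delta)$ and unfolding with Proposition \ref{Tm_of_sum} writes each $\lambda$-component of $\TT_\mathbf{m}(\Delta)$ as a tensor product $\bigotimes_i \TT_{\lambda_i}(f_i)$, where $f_0$ and $f_{k+1}$ are the inclusions $\TT_{\mathbf{m}'}(i_1), \TT_{\mathbf{m}'}(i_2)$ and the remaining $f_i$ are the transfers $\transfer_{H_i}^{\Sigma_{\mathbf{m}'}}$. The main obstacle I anticipate is the identification of $\TT_r$ applied to a transfer $\transfer_K^G$ with the transfer $\transfer_{K \wr \Sigma_r}^{G \wr \Sigma_r}$ in completed $E$-homology; this will follow by applying $\pi_0 L_{K(h)}$ to Proposition \ref{transfer_ext_powers} and invoking the comparison isomorphism of Theorem \ref{Tm_exists} on finite free $E$-modules. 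Using that $\TT_0$ acts as the identity on $E_0$ (which accounts for the two special summands at $\mu_0$ and $\mu_{k+1}$), and that tensor products of homology transfers from subgroups of a direct product realize the transfer from the product subgroup, each $\lambda$-component is identified with $\transfer_{\widetilde{\Sigma_{\mathcal{H}, \lambda}}}^{\Sigma_\mathbf{m}}$ as required, completing the induction.
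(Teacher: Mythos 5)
Your proof is correct and takes essentially the same route as the paper's: induction on $\ell$ with the identical choice of $\mathcal{H}_\mathbf{m}$ in the inductive step, citing the same ingredients (Propositions \ref{Tm_of_sum}, \ref{Tm_of_point}, \ref{comp_of_Tm}, \ref{transfer_ext_powers}, \ref{transfer_is_diagonal} and Lemma \ref{annoying_partition_lemma}). You are in fact slightly more careful than the paper on property (3), noting explicitly that the $\lambda = \mu_i$ summands (for $0 < i \leq k$) require the inductive hypothesis wreathed with $\Sigma_{m_\ell}$, whereas the paper attributes (3) solely to Lemma \ref{annoying_partition_lemma}.
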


\begin{proof}
We'll induct on the length of $\mathbf{m}$. If the length is $1$ (so that $\mathbf{m} = (m)$), we can take $\mathcal{H}_\mathbf{m} = \{\Sigma_i\times \Sigma_{m-i}\}$ for $0<i<m$.

Now consider an arbitrary $\mathbf{m}$, and assume the lemma for $\mathbf{m}' = (m_1,\ldots,m_{\ell-1})$. 
Enumerate $\mathcal{H}_{\mathbf{m}'}$ as $(H_1, \ldots, H_{r-1}),$ and set $H_0 = H_r = \Sigma_{\mathbf{m}'}.$ 

We claim that the collection
\begin{align*}
\mathcal{H}_\mathbf{m} &= \left\{\prod_{i=0}^r H_i\!\wr\!\Sigma_{\lambda_i}\Big| \lambda \in \Lambda_{m_\ell,r}, \lambda\neq\mu_0,\mu_m\right\}\\
	&=\left\{ \widetilde{\Sigma_{\mathcal{H}_{\mathbf{m}'},\lambda}} \Big| \lambda \in \Lambda_{m_\ell,r}, \lambda\neq\mu_0,\mu_m\right\}\\
\end{align*}
will satisfy properties (1)-(5).

Each of the $H_i$ have finite free $E_0^\wedge(BH)$ by assumption. Taking products and wreathing with symmetric groups both preserve finite freeness in completed homology, so property (1) holds.

To check property (2), there are two cases. If $t=\ell$, we can take $\lambda = \nu_j$ to get $\Sigma_{\mathbf{m},\ell,j}$. 
 If $t<\ell$, note that $\Sigma_{\mathbf{m},t,j}= \Sigma_{\mathbf{m}',t,j}\wr\Sigma_{m_\ell}$. Since $\Sigma_{\mathbf{m}',t,j} \in \mathcal{H}_{\mathbf{m}'}$, we can take $\lambda$ to be $\mu_t$. 

Lemma \ref{annoying_partition_lemma} shows property (3), and for property (4) we can make the following computation.

\begin{align*}
		\TT_{m_\ell}(\TT_{\mathbf{m}'}(E_0 \oplus E_0)) 	&\cong \TT_{m_\ell}\left(\bigoplus_{i=0}^r E^{\wedge}_0(BH_i)\right)\\
										&\cong \bigoplus_{\lambda\in \Lambda_{m_\ell,r}} \bigotimes_{i=0}^r \TT_{\lambda_i} \left(E_0^{\wedge}BH_i\right)\\
										&\cong \bigoplus_{\lambda\in \Lambda_{m_\ell,r}} \bigotimes_{i=0}^r E_0^{\wedge} B\big(H_i\!\wr\!		\Sigma_{\lambda_i}\big)\\
										&\cong \bigoplus_{\lambda\in \Lambda_{m_\ell,r}} E_0^{\wedge} \Bigg( \prod_{i=0}^r H_i\!\wr\!\Sigma_{\lambda_i}\Bigg).
	\end{align*}
	The first isomorphism here follows from Proposition \ref{extended_power_of_wedge}, the second from Proposition \ref{Tm_of_point} and the fourth from Proposition \ref{comp_of_Tm}.

Finally, property (5) follows from combining Propositions \ref{transfer_ext_powers} and \ref{Tm_of_sum} with the inductive hypotheses.

\end{proof}

Properties (4) and (5) of Lemma \ref{subgroup_family_lemma} give us a way to compute $\leftlin(\TT_\mathbf{m})(E_0).$

\begin{cor}\label{leftlin_of_TT_bold_m}
\[
\leftlin(\TT_\mathbf{m})(E_0) = \operatorname{ker}\left( 
	\prod_{H\in \mathcal{H}_\mathbf{m}} 
	\operatorname{tr}_H^{\Sigma_\mathbf{m}}
	\right).
\]
\end{cor}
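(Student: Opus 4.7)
The plan is to unwind Definition \ref{left_lin} in the presence of the calculation provided by Lemma \ref{subgroup_family_lemma}. By that definition, $\leftlin(\TT_\mathbf{m})(E_0)$ is the equalizer of the two maps $\TT_\mathbf{m}(i_1 + i_2) = \TT_\mathbf{m}(\Delta)$ and $\TT_\mathbf{m}(i_1) + \TT_\mathbf{m}(i_2)$, both going from $\TT_\mathbf{m}(E_0)$ to $\TT_\mathbf{m}(E_0 \oplus E_0)$. So the task reduces to identifying these two maps explicitly under a convenient decomposition of the target.

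First I would invoke property (4) of Lemma \ref{subgroup_family_lemma} to write the target as
\[
\TT_\mathbf{m}(E_0) \oplus \left(\bigoplus_{H \in \mathcal{H}_\mathbf{m}} E_0^{\wedge}(BH)\right) \oplus \TT_\mathbf{m}(E_0),
\]
with the outer $\TT_\mathbf{m}(E_0)$ summands corresponding to the two ``extreme'' indices (the tuples with all mass in the first or last slot). With this identification, $\TT_\mathbf{m}(i_1)$ and $\TT_\mathbf{m}(i_2)$ are precisely the inclusions into the first and last summands respectively, so $\TT_\mathbf{m}(i_1) + \TT_\mathbf{m}(i_2)$ is the map $x \mapsto (x, 0, x)$. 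Next, property (5) of the same lemma identifies $\TT_\mathbf{m}(\Delta)$ in this decomposition as
\[
x \longmapsto \left(x,\ \prod_{H \in \mathcal{H}_\mathbf{m}} \operatorname{tr}_H^{\Sigma_\mathbf{m}}(x),\ x\right).
\]

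Comparing componentwise, the equalizer is cut out exactly by the condition $\prod_{H \in \mathcal{H}_\mathbf{m}} \operatorname{tr}_H^{\Sigma_\mathbf{m}}(x) = 0$, which is the claimed kernel. There is no serious obstacle here: all the substantive work has been packaged into Lemma \ref{subgroup_family_lemma}, and the corollary is essentially a matter of reading off what the equalizer presentation of left linearization says once the target is decomposed. The only mild point to verify is that the two ``extreme'' summands in the decomposition are indeed the images of $\TT_\mathbf{m}(i_1)$ and $\TT_\mathbf{m}(i_2)$, which is immediate from how the decomposition in Lemma \ref{subgroup_family_lemma} is built inductively out of Proposition \ref{extended_power_of_wedge}.
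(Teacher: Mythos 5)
Your proof is correct and is exactly what the paper intends; the paper simply asserts that properties (4) and (5) of Lemma \ref{subgroup_family_lemma} give the result without writing out the unwinding, and your argument supplies precisely that unwinding of Definition \ref{left_lin}.
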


%\begin{align*}
%J_{m,n} &= \operatorname{ker} \left( E^{\wedge}_0(B\Sigma_m\wreath\Sigma_m) \to \bigoplus_{i=1}^{n-1} E^{\wedge}_0(B(\Sigma_i \times \Sigma_{n-i})\wreath\Sigma_m) \oplus \bigoplus_{k=1}^{m-1} E^{\wedge}_0 (\Sigma_m\wreath(\Sigma_k \times \Sigma_{m-k})\right)\\
%	&= \operatorname{ker} \left( \bigoplus_{i=1}^{n-1} \operatorname{tr}^{\Sigma_m\wr\Sigma_m}_{(\Sigma_i\times\Sigma_{n-i})\wr\Sigma_m} \times \bigoplus_{k=1}^{m-1}\operatorname{tr}^{\Sigma_m\wr\Sigma_m}_{\Sigma_m\wr(\Sigma_k\times \Sigma_{m-k})} \right)\\
%	&=\ker\left(\bigoplus_{i = 1}^{n-1} \operatorname{tr}^{\Sigma_m\wr\Sigma_m}_{\widetilde{\Sigma_{\mu_i}}} \times \bigoplus_{k=1}^{m-1} \operatorname{tr}^{\Sigma_m\wr\Sigma_m}_{\widetilde{\Sigma_{\nu_k}}}\right).
%\end{align*}
Finally, we can make our desired identification.
\begin{cor}\label{wreath_computation}
There is an isomorphism of left $E_0$-modules ${}^s\leftlin(\TT_\mathbf{m})(E_0) \cong {}^sJ_{\mathbf{m}}\subseteq E_0(B\Sigma_{m_1}\wreath\ldots\wr\!\Sigma_{m_\ell}).$ 
\end{cor}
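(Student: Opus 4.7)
The plan is to directly compare two kernel descriptions and verify they coincide as submodules of $E_0^\wedge(B\Sigma_\mathbf{m})$. By Corollary \ref{leftlin_of_TT_bold_m}, the module $\leftlin(\TT_\mathbf{m})(E_0)$ sits inside $E_0^\wedge(B\Sigma_\mathbf{m})$ as the kernel of $\prod_{H\in \mathcal{H}_\mathbf{m}} \transfer_H^{\Sigma_\mathbf{m}}$. By Definition \ref{J_bold_m}, $J_\mathbf{m}$ sits inside $E_0^\wedge(B\Sigma_\mathbf{m})$ as the kernel of $\bigoplus_{t,j}\transfer^{\Sigma_\mathbf{m}}_{\Sigma_{\mathbf{m},t,j}}$. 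Since both are realized as literal submodules of the same ambient $E_0$-module, equality of the two kernels will immediately furnish the required isomorphism of left $E_0$-modules, and in particular we do not need to separately check compatibility with the $s$-structure.

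For the inclusion $\leftlin(\TT_\mathbf{m})(E_0) \subseteq J_\mathbf{m}$, I would invoke property (2) of Lemma \ref{subgroup_family_lemma}: each $\Sigma_{\mathbf{m},t,j}$ is itself an element of $\mathcal{H}_\mathbf{m}$, so the family of transfers defining $J_\mathbf{m}$ is a subfamily of the one defining $\leftlin(\TT_\mathbf{m})(E_0)$, and any $x$ killed by the larger family is killed by the smaller.

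For the reverse inclusion, I would use property (3): given $H \in \mathcal{H}_\mathbf{m}$, pick some $\Sigma_{\mathbf{m},t,j}$ with $H \leq \Sigma_{\mathbf{m},t,j}$. Functoriality of the transfer under composition of covering maps (Section \ref{transfer_section}) gives
\[
\transfer_H^{\Sigma_\mathbf{m}} = \transfer_H^{\Sigma_{\mathbf{m},t,j}} \circ \transfer_{\Sigma_{\mathbf{m},t,j}}^{\Sigma_\mathbf{m}}.
\]
Hence, if $x \in J_\mathbf{m}$ then $\transfer_{\Sigma_{\mathbf{m},t,j}}^{\Sigma_\mathbf{m}}(x) = 0$ and consequently $\transfer_H^{\Sigma_\mathbf{m}}(x) = 0$ for every $H \in \mathcal{H}_\mathbf{m}$, so $x \in \leftlin(\TT_\mathbf{m})(E_0)$.

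Honestly, there is no hard step remaining: the genuine work was done in constructing the family $\mathcal{H}_\mathbf{m}$ in Lemma \ref{subgroup_family_lemma} precisely so that its properties (2) and (3) sandwich $\mathcal{H}_\mathbf{m}$ between the $\Sigma_{\mathbf{m},t,j}$ and their subgroups, making the two kernels automatically equal. If anything, the only thing to be mildly careful about is the direction of the transfer in homology versus cohomology, but this is handled uniformly by appealing to the spectrum-level statement $\transfer^{G}_H = \transfer^{K}_H \circ \transfer^{G}_K$ and then applying $\pi_0 L_{K(h)}(E \wedge -)$.
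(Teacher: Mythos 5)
Your proof is correct and takes exactly the approach the paper intends: the paper's one-line proof says to combine Corollary \ref{leftlin_of_TT_bold_m} with properties (2) and (3) of Lemma \ref{subgroup_family_lemma}, and you have simply spelled out the two containments between the kernels that those properties give (property (2) for the forward inclusion, property (3) plus functoriality of transfer for the reverse). The direction of the transfer maps in completed homology is handled correctly.
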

\begin{proof}
Combine Corollary \ref{leftlin_of_TT_bold_m} and Properties (2) and (3) from Lemma \ref{subgroup_family_lemma}.
\end{proof}

\subsection{A Bimodule Structure on $J_{\mathbf{m}}$.}

The left $E_0$-modules $\leftlin(\TT_\mathbf{m})(E_0)$ also carry right $E_0$ module structures; the proof is identical to that of Proposition \ref{right_mod_struct_on_linearization}. We can now use this to give the $J_\mathbf{m}$ a bimodule structure.

\begin{defn}
The identification in Corollary \ref{wreath_computation} endows $J_\mathbf{m}$ with a right $E_0$-module structure $t$, coming from the right module structure $t$ on $\leftlin(\TT_\mathbf{m})(E_0)$.
\end{defn}

\begin{remark}
The definition of a bimodule structure on $J_m$ (that is, if $\mathbf{m}$ has length 1) was as the dual of the bimodule structure on $E^0(B\Sigma_m)/I_m$. Proposition \ref{module_structures_same} shows that there is no ambiguity in this case. One might like to similarly define the module structure on $J_\mathbf{m}$ as dual to that on $E^0(B\Sigma_\mathbf{m})/I_\mathbf{m}$. Unfortunately, we do not yet know if either of these objects are dualizable, let alone dual to one another. This is in fact the case, and a large portion of the proof of the main theorem is proving this. However, this seems to be inextricably tied up with the rest of that proof, so we cannot separate it out here.
\end{remark}

\section{Proof of the main theorem}\label{proof_of_main}

We're now ready to prove the theorem. For clarity, we restate it here.

\begin{theorem}\label{main_theorem}

Let $\mathbf{m} = (m_1, \ldots, m_\ell)$ be an $\ell$-tuple of positive integers, and let $\Sigma_\mathbf{m} = \Sigma_{m_1}\wr\ldots\wr\Sigma_{m_\ell}$ be an iterated wreath product of symmetric groups. There are ideals $I_\mathbf{m}$ in $E^0(\Sigma_\mathbf{m})$ analogous to  Strickland's ideals. The quotient $E^0(\Sigma_\mathbf{m})/I_\mathbf{m}$ has an $E_0$-module structure $t$ in addition to the usual module structure $s$.
Then there is an isomorphism of  $E_0$-$E_0$ bimodules
\[\biggl.^t\!\left(\frac{E^0(B\Sigma_{m_1} \wreath \ldots \wreath \Sigma_{m_\ell})}{I_{\mathbf{m}}}\right)\!\biggr.^s \cong \biggl.^t\!\left(\frac{E^0(B\Sigma_{m_1})}{I_{m_1}}\right)\!\biggr.^s \bigotimes \cdots \bigotimes \biggl.^t\!\left(\frac{E^0(B\Sigma_{m_\ell})}{I_{m_\ell}}\right)\!\biggr.^s.\]

	The right hand side is the tensor product of (right) dualizable $E^0$-modules, and so is a (right) dualizable $E^0$-module. In particular, both sides are finitely generated projective (and so, free) modules.
\end{theorem}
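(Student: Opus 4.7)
The plan is to work with right $E_0$-linear duals, leveraging the linearization machinery of Section \ref{left_linearization_section}, and then dualize back once everything is known to be finite free. By Proposition \ref{dual_of_tensor} together with Proposition \ref{J_m_duality}, the right dual of the proposed right hand side is the bimodule
\[
{}^s J_{m_\ell}^t \otimes_{E_0} \cdots \otimes_{E_0} {}^s J_{m_1}^t,
\]
and by Corollary \ref{wreath_computation} the natural candidate for the dual of the left hand side is ${}^s\leftlin(\TT_\mathbf{m})(E_0)^t \cong {}^s J_\mathbf{m}^t$, where $\TT_\mathbf{m} = \TT_{m_\ell}\circ \cdots \circ \TT_{m_1}$. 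It therefore suffices to produce an isomorphism of bimodules
\[
\leftlin(\TT_\mathbf{m})(E_0) \;\cong\; {}^s J_{m_\ell}^t \otimes_{E_0} \cdots \otimes_{E_0} {}^s J_{m_1}^t.
\]

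The engine for this is the chain rule (Theorem \ref{Dual_chain_rule}), applied by induction on $\ell$. Writing $\mathbf{m}' = (m_1,\ldots, m_{\ell-1})$, the chain rule applied to $F = \TT_{m_\ell}$ and $G = \TT_{\mathbf{m}'}$ at $X = E_0$ yields $\leftlin(\TT_\mathbf{m})(E_0) \cong \leftlin(\TT_{m_\ell})\bigl(\leftlin(\TT_{\mathbf{m}'})(E_0)\bigr)$. By the inductive hypothesis the inner factor is identified with ${}^s J_{m_{\ell-1}}^t \otimes \cdots \otimes {}^s J_{m_1}^t$, which is finite free as a left $E_0$-module since each $J_{m_i}$ is finite free by Proposition \ref{J_m_duality}. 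Proposition \ref{ff_comp} then rewrites $\leftlin(\TT_{m_\ell})$ evaluated on this finite free module as ${}^s J_{m_\ell}^t$ tensored over $E_0$ with it, closing the induction.

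Applying the chain rule at each stage requires checking its splitting hypothesis for $G = \TT_{\mathbf{m}'}$ at $E_0$. Here $\TT_{\mathbf{m}'}(E_0) = E_0^\wedge(B\Sigma_{\mathbf{m}'})$ is finite free over $E_0$ by Example \ref{cohom_of_sym_gps}, its submodule $\leftlin\TT_{\mathbf{m}'}(E_0) = J_{\mathbf{m}'}$ is finite free by the inductive hypothesis, and $\top\TT_{\mathbf{m}'}(E_0)$ is a finite sum of the $E_0^\wedge(BH)$ with $H \in \mathcal{H}_{\mathbf{m}'}$ by Lemma \ref{subgroup_family_lemma}, each of which is finite free. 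Finite-freeness of all three terms over the local ring $E_0$ supplies both required splittings.

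With the dual bimodule isomorphism established, $J_\mathbf{m}$ is finite free as a tensor product of finite free modules, the argument of Proposition \ref{J_m_duality} extends to identify $E^0(B\Sigma_\mathbf{m})/I_\mathbf{m}$ as the right dual of $J_\mathbf{m}$ (and in particular shows this quotient is finite free as a right $E_0$-module), and dualizing the bimodule isomorphism via Proposition \ref{dual_of_tensor} then yields the claim. The main obstacle I anticipate is the bimodule bookkeeping: matching the $t$-structures through this chain of identifications requires an iterated analogue of Proposition \ref{module_structures_same}, using naturality of the $\TT_{m_i}$, the Eilenberg--Watts form of Proposition \ref{ff_comp}, and the inductive definition of $\bar{P}_\mathbf{m}$ in Example \ref{quotient_of_wreath_product_bimod}. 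A secondary issue is ensuring that the extension of Strickland's duality from single symmetric groups to iterated wreath products is legitimate, which reduces (given finite-freeness of $J_\mathbf{m}$) to checking that the cokernel of the inclusion $J_\mathbf{m} \hookrightarrow E_0^\wedge(B\Sigma_\mathbf{m})$ is itself finite free, via the decomposition supplied by Lemma \ref{subgroup_family_lemma}.
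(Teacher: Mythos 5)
Your proposal follows the same route as the paper --- dualize, identify $J_\mathbf{m}$ with $\leftlin\TT_\mathbf{m}(E_0)$, apply the chain rule with $F=\TT_{m_\ell}$ and $G=\TT_{\mathbf{m}'}$, compute $\leftlin\TT_{m_\ell}$ on a finite free input via Proposition~\ref{ff_comp}, then dualize back --- and your ordering (checking the chain rule's hypothesis on $G(E_0)=\TT_{\mathbf{m}'}(E_0)$) is in fact the consistent one for the decomposition $\TT_\mathbf{m}=\TT_{m_\ell}\circ\TT_{\mathbf{m}'}$. The only place you genuinely diverge from the paper is in how you justify the chain rule's splitting hypothesis, and that step does not hold up as written.

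You assert that ``finite-freeness of all three terms over the local ring $E_0$ supplies both required splittings.'' That implication is false. For an exact sequence $0\to A\to B\to C$ of finite free modules over a regular local ring of dimension $\ge 2$, the inclusion $A\hookrightarrow B$ need not split: take $R=k\powseries{x,y}$ and the Koszul sequence $0\to R\xrightarrow{(x,y)}R^2\xrightarrow{(y,-x)}R$, whose cokernel $R^2/(x,y)R\cong(x,y)$ is not projective. Since $E_0\cong W(\kappa)\powseries{u_1,\dots,u_{h-1}}$ has dimension $h$, which may exceed one, this is exactly the situation you are in, and freeness of the three terms gives you nothing. What actually produces the splittings is the argument the paper uses: dualize the sequence $0\to J_{\mathbf{m}'}\to E_0^\wedge(B\Sigma_{\mathbf{m}'})\to\bigoplus_H E_0^\wedge(BH)$ to the right-exact cohomological sequence $\bigoplus_H E^0(BH)\to E^0(B\Sigma_{\mathbf{m}'})\to E^0(B\Sigma_{\mathbf{m}'})/I_{\mathbf{m}'}\to 0$. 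Here the final term is free by the inductive hypothesis, so the surjection splits; the kernel is then a free summand of $E^0(B\Sigma_{\mathbf{m}'})$, so the surjection of $\bigoplus_H E^0(BH)$ onto it splits as well; dualizing back gives both $\sigma$ and $\tau$. This is also where the inductive hypothesis is genuinely load-bearing, and where you need to have already established (as part of the induction) that $J_{\mathbf{m}'}$ and $E^0(B\Sigma_{\mathbf{m}'})/I_{\mathbf{m}'}$ are mutually dual, not merely both free. You gesture at this at the end, but the splitting step as stated is a gap that the duality argument is precisely designed to fill.
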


\begin{proof}
The ideals $I_\mathbf{m}$ were defined in Example \ref{quotient_of_wreath_product_bimod}, and the second module structure was constructed in that same example.
To prove the isomorphism, we induct on the length of $\mathbf{m}$. The statement is a tautology for length one sequences, but we do note that freeness in this case was proven by Strickland (\cite{strickland_symmetric}, Theorem 8.6.
Now consider a general $\mathbf{m} = (m_1, \ldots , m_\ell)$, and let $\mathbf{m}' = (m_1, \ldots, m_{\ell-1})$. First we dualize and prove an isomorphism 

\begin{equation}\label{dual_theorem}
	{}^s\!J_{\mathbf{m}}^t \simeq {}^s\!J_{m_\ell}^t \otimes {}^s\!J_{\mathbf{m'}}^t
\end{equation}

By Proposition \ref{wreath_computation}, the left hand side is $\leftlin(\TT_\mathbf{m})(E_0)$, and assembling Propositions \ref{wreath_computation}, \ref{ff_comp}, \ref{dual_lin_on_point} and \ref{module_structures_same} gives us that the right hand side is $\leftlin(\TT_{\mathbf{m}'}) \circ \leftlin(\TT_{m_\ell})(E_0)$. We now need to verify the hypotheses of the chain rule (Theorem \ref{Dual_chain_rule}) and it will give our isomorphism. Specifically, we must show that the sequence

\begin{equation*}
	0 \to \leftlin(\TT_{m_\ell})(E_0) \to \TT_{m_\ell}(E_0) \to \top\TT_{m_\ell}(E_0)
\end{equation*}
splits.

Proposition \ref{dual_lin_on_point} allows us to identity this sequence as 
\begin{equation*}
	0 \to J_{m_\ell} \to E_0^{\wedge}(B\Sigma_{m_\ell}) \to \bigoplus_{i=1}^{n_\ell -1} E_0^{\wedge}(B(\Sigma_i \times \Sigma_{m_\ell-i}))
\end{equation*}

where the second map is the sum over appropriate transfers.

By Examples \ref{cohom_of_sym_gps} and \ref{quotient_of_sym_gp_bimod}, all of the terms in this sequence are free, and taking $E^0$-linear duals, we get the sequence

\begin{equation*}
 \bigoplus_{i=1}^{n_\ell -1} E^{0}(B(\Sigma_i \times \Sigma_{m_\ell-i})) \to E^0(B\Sigma_{m_\ell}) \to \frac{E^0(B\Sigma_{m_\ell})}{I_{m_\ell}} \to 0.
\end{equation*}

Since all of the terms in this sequence are free  we get the duals of the required splittings, and dualizing back gives the splittings for the original sequence.

Now we've established an isomorphism as in (\ref{dual_theorem}). It remains to show that we can in fact dualize again and recover the isomorphism of the theorem. The right hand side of the isomorphism in the statement of the theorem is a right dualizable $E_0$-module by induction and Proposition \ref{dual_of_tensor}; its dual is the right hand side of the isormorphism (\ref{dual_theorem}). That isomorphism then gives that ${}^s\!J_{\mathbf{m}}^t$ is finite free as a left module. To finish the proof we need to show that $E^0(B\Sigma_\mathbf{m})/I_\mathbf{m}$ is finite free as a right module, with right dual ${}^s\!J_{\mathbf{m}}^t$.

To prove that, consider the two sequences

%this looks like trash garbage
\[
		\bigoplus E^0(\Sigma_{\mathbf{m},t,j}) \xrightarrow{\oplus \transfer_{\Sigma_{\mathbf{m},t,j}}^{\Sigma_{\mathbf{m}}}}  E^0(B\Sigma_{\mathbf{m}}) \to \frac{E^0(B\Sigma_\mathbf{m})}{I_\mathbf{m}} \to 0
\]

and 

\begin{equation*}
	\Hom\left(\bigoplus E_0^{\wedge}(\Sigma_{\mathbf{m},t,j}), E_0\right) \to \Hom(E_0^{\wedge}B\Sigma_\mathbf{m}, E_0) \to \Hom(J_\mathbf{m}, E_0) \to 0.
\end{equation*}

These are exact by Lemma \ref{subgroup_family_lemma} and freeness (for the second sequence). The first terms of these sequences are canonically isomorphic by duality, as are the second terms. Under these isomorphisms, the first maps also get identified. Thus, the third terms are also isomorphic. Since the third term in the second sequence is finite free (as Hom out of a finite free module), the third term of the first sequence is also, and we are done.
% The Dual Chain Rule (Theorem \ref{Dual_chain_rule}) applied to $M = E_0$ gives that 
% \[\leftlin(\TT_m\circ\TT_n)(E_0) = \leftlin\TT_m\circ \leftlin\TT_n(E_0).\]
% Corollary \ref{wreath_computation} identifies the left hand side with ${}^s(J_{m,n})^t$ and Propositions \ref{ff_comp} and \ref{dual_lin_on_point} give that the right hand side is ${}^s(J_n)^t\otimes_{E_0}{}^s(J_m)^t$. Put together, these say that
% \[
% {}^s(J_{m,n})^t = {}^s(J_n)^t\otimes_{E_0}{}^s(J_m)^t.
% \]

% Taking left duals and noting Examples \ref{dualize_Jn} and \ref{dualize_Jmn} give the result. More generally, the same proof gives that
% \[
% {}^s(J_{\mathbf{m}})^t = {}^s(J_{n_\ell})^t\otimes_{E_0}\ldots \otimes_{E_0} {}^s(J_{n_1})^t.
% \] 
% Dualizing this statement generalizes Theorem \ref{main_theorem} to arbitrary wreath products of symmetric groups.

\end{proof}
\bibliographystyle{amsalpha}
\bibliography{refs.bib}

\end{document}